\newcommand{\ud}{\,\mathrm{d}}
\newcommand{\udiv}{\, \mathrm{div}\,}
\newcommand{\eps}{\varepsilon}
\newcommand{\R}{{\mathbb{R}}}
\newcommand{\N}{{\mathbb{N}}}
\newcommand{\ind}{{\mathbb I}}
\begin{document}

\title{Quantitative estimates of propagation of chaos for stochastic systems with $W^{-1,\infty}$ kernels \thanks{P.E. Jabin is partially supported by NSF Grant 1312142 and by NSF Grant RNMS (Ki-Net) 1107444. Z. Wang is partially supported by NSF Grant 1312142 and Ann G. Wylie Dissertation Fellowship}
}

\titlerunning{Quantitative estimates of propagation of chaos}        

\author{Pierre-Emmanuel Jabin  \and
        Zhenfu Wang 
}


\institute{P.E. Jabin \at
              CSCAMM and Dept. of Mathematics, University of Maryland,
College Park, MD 20742, USA. \\
              \email{pjabin@cscamm.umd.edu}           
           \and
           Z. Wang \at
              CSCAMM and Dept. of Mathematics, University of Maryland,
College Park, MD 20742, USA. \\
\email{zwang423@math.umd.edu} \\
\emph{Present address:} Department of Mathematics, University of Pennsylvania, Philadelphia, PA 19104, USA. 
}

\date{Received: date / Accepted: date}

\maketitle 

\begin{abstract}
We derive quantitative estimates proving the propagation of chaos for large stochastic systems of interacting particles. We obtain explicit bounds on the relative entropy between the joint law of the particles and the tensorized law at the limit. We have to develop for this new laws of large numbers at the exponential scale. But our result only requires very weak regularity on the interaction kernel in the negative Sobolev space $\dot W^{-1,\infty}$, thus including the Biot-Savart law and the point vortices dynamics for the 2d incompressible Navier-Stokes.

\keywords{Propagation of chaos \and Relative entropy \and Law of large numbers \and 2d incompressible Navier-Stokes}
 \subclass{35Q30 \and 60F17 \and 60H10 \and 76R99}
\end{abstract}

\section{Introduction} 
\subsection{Motivation}
We consider large systems   of  $N$ indistinguishable point-particles given by the coupled  stochastic differential equations (SDEs) 
\begin{equation}
\label{SDE1st}
\qquad \ud X_i = F(X_i) \ud t+\frac{1}{N} \sum_{j \ne i} K (X_i -X_j) \ud t + \sqrt{2 \sigma_N} \ud W_t^i,  \quad i=1, \cdots, N
\end{equation}
where for simplicity $X_i\in \Pi^d$, the $d$-dimensional torus, the $W^i$ are $N$ independent standard Wiener Processes (Brownian motions) in $\mathbb{R}^d$ and the stochastic term in \eqref{SDE1st} should be understood in the It\^{o} sense. 

The interaction term is normalized by the factor $1/N$, corresponding to the mean field scaling. For a fixed $N$ our goal is hence to derive explicit, quantitative estimates comparing System \eqref{SDE1st} to the {\em mean field limit} $\bar\rho$ solving
\begin{equation}
\partial_t\bar\rho+\udiv_x (\bar\rho\,[ F +K\star_x\bar\rho ] )=\sigma\,\Delta\bar\rho. \label{limitmeanfield}
\end{equation}
Such estimates in particular imply the propagation of chaos in the limit $N\rightarrow \infty$. But precisely because they are quantitative, they also characterize the reduction of complexity of System \eqref{SDE1st} for large and finite $N$.  

\medskip

A guiding motivation of interaction kernel $K$ in our work is given by the Biot-Savart law in dimension $2$, namely
\begin{equation}
K(x)=\alpha\,\frac{x^\perp}{|x|^2}+K_0(x),\label{biot-savart}
\end{equation}
where $x^\perp$ denotes the rotation of vector $x$ by $\pi/2$ and where $K_0$ is a smooth correction to periodize $K$ on the torus represented by $[-1/2,\;1/2]^d$. If $\omega(x)\in L^p(\Pi^d)$ with $p\geq 1$, then $u=K\star_x \omega$ solves 
\[
\mbox{curl}\,u=\mbox{curl}\, K\star_x \omega=\alpha\,\Big(\omega-\int_{\Pi^d} \omega\Big),\quad \udiv u=\udiv K\star \omega=0.
\]
If $F=0$, the limiting equation \eqref{limitmeanfield} becomes
\begin{equation}
\partial_t \omega+K\star_x \omega\cdot\nabla_x \omega=\sigma\,\Delta \omega,\label{vorticity}
\end{equation}
where we now write  on $\omega(t,x)$,  using the classical notation for the vorticity of a fluid.  Eq. \eqref{vorticity} is invariant by the addition of a constant $\omega\rightarrow \omega+C$. We may hence assume that $\int_{\Pi^d} \omega=0$ and Eq. \eqref{vorticity} is then equivalent to the 2d incompressible Navier-Stokes system on $u(t,x)$ s.t. $\omega=\mbox{curl}\,u$,
\begin{equation}
  \begin{split}
    &\partial_t u+u\cdot\nabla_x u=\nabla_x p+\sigma\,\Delta u,\\
    &\udiv u=0.
  \end{split}\label{2dNS}
\end{equation}
The system of particles \eqref{SDE1st} now corresponds to a system of interacting point vortices with additive noise. Because we present our method  in the simplest framework where particles are indistinguishable,  all point vortices necessarily have the same vorticity in this setting. 

\medskip

Our main results provide an {\em explicit estimate quantifying that the system \eqref{SDE1st} is within $O(N^{-1/2})$ from the limit \eqref{limitmeanfield}} in an appropriate statistical sense. This applies to 
\begin{itemize}
\item If the diffusion is non-vanishing, $\sigma_N\to\sigma>0$, to all kernels $K\in W^{-1,\infty}$ with $\udiv K\in W^{-1,\infty}$, see Theorem \ref{maintheorem}  in subsection \ref{non-vanish}. We devote subsection \ref{secapplications}
  to a long discussion of various examples of kernels $K$ that are covered by Theorem \ref{maintheorem} but emphasize here that {\em it applies to the Biot-Savart law} \eqref{biot-savart} and to any kernel $K$ s.t. $ |x|\,K\in L^\infty$ and  $\udiv K\in W^{-1,\infty}$.
  \item If the diffusion vanishes (or is degenerate in some directions), $\sigma_N\rightarrow \sigma$ with $\sigma\not>0$, we can handle any kernels $K\in L^\infty$ with $\udiv K\in L^{\infty}$. Moreover if the kernel is anti-symmetric, $K(-x)=-K(x)$ (which is the case for \eqref{biot-savart}), then we only need $ |x|\,K\in L^\infty$ with $\udiv K\in L^{\infty}$. The corresponding Theorem \ref{thmvanishingviscosity} is presented in subsection \ref{vanishingdiffusion}. 
\end{itemize}
We are therefore able to handle the Biot-Savart law independently of the viscosity. But we should note that Theorem \ref{maintheorem} applies to much more general kernels.

The key argument in our proof is given by Theorem \ref{MEII}, a new large deviation estimate which bounds an appropriate partition function
\[
\sup_N \int_{\Pi^{d\,N}} \exp\Big(\frac{1}{N}\sum_{i,j=1}^N \phi(x_i,x_j)\Big)\,\Pi_{i=1}^N \bar\rho( \ud x_i) <\infty,
\]
for a modified potential $\phi$ which is related to $K$ and $\bar\rho$ but is not the potential of the dynamics. The critical point is that such an estimate holds even if $\phi$ is not continuous, but only exponentially integrable with appropriate cancellations. 

The rest of the article is organized as follows: The last subsection in the introduction sketches the proof of our basic {\em a priori} estimates.
Section \ref{proofmain} presents the proof of our main results, assuming that one has two critical estimates, Theorems \ref{MEI} (law of large numbers at exponential scale) and \ref{MEII} (large deviation estimate mentioned above). We establish some preliminary combinatorics notations in section \ref{PreCombi}. This enables us to easily prove Theorem \ref{MEI} in section \ref{proofMEI}. The proof of Theorem \ref{MEII} is considerably more difficult; it is performed in section \ref{ProofMEII} which is the main technical contribution of this article.

%
\subsection{Main results for non-vanishing diffusion\label{non-vanish}}

We start by recalling the precise definition of the space $\dot W^{-1,\infty}(\Pi^d)$ which is used both in Prop. \ref{entropybounds} and in Theorem \ref{maintheorem} and which is critical to our applications.
\begin{definition}
A function $f$ with $\int_{\Pi^d} f=0$ belongs to $\dot W^{-1,\infty}(\Pi^d)$ iff there exists a vector field $g$ in $L^\infty(\Pi^d)$ s.t. $f=\udiv g$. Similarly a vector field $K$ with $\int_{\Pi^d} K=0$ belongs to $\dot W^{-1,\infty}(\Pi^d)$ iff there exists a matrix field $V$ in $L^\infty(\Pi^d)$ s.t. $K=\udiv V$ or $K_\alpha=\sum_\beta \,\partial_\beta V_{\alpha\beta}$. We then denote  
\[
\|f\|_{\dot{W}^{-1, \infty}} = \inf_{g} \|g\|_{L^\infty}, \quad \text{with}\ \  f = \udiv g,
\]
and similarly 
\[
\|K\|_{\dot W^{-1, \infty}} = \inf_V \|V\|_{L^\infty}, \quad \text{with} \ \ K = \udiv V. 
\]
  \end{definition}

\medskip

Following the basic approach introduced in \cite{JW1}, our main idea  is to use relative entropy methods to compare the {\em coupled law} $\rho_N(t,x_1,\ldots,x_N)$ of the whole system \eqref{SDE1st} to the tensorized law
\[
\bar\rho_N(t,x_1,\ldots, x_N)=\bar\rho^{\otimes^N}=\Pi_{i=1}^N \bar\rho(t,x_i),
\]   
consisting of $N$ independent copies of a process following the law $\bar\rho$, solution to the limiting equation \eqref{limitmeanfield}. 

As our estimates carry over $\rho_N$, we do not consider directly the system of SDEs \eqref{SDE1st} but instead work at the level of the Liouville equation 
\begin{equation}
\partial_t\rho_N+\sum_{i=1}^N \udiv_{x_i}\bigg(\rho_N\,\Big(F(x_i)+ \frac{1}{N}\,\sum_{j=1}^N K(x_i-x_j)\Big) \bigg)=\sum_{i=1}^N \sigma_N\,\Delta_{x_i} \rho_N,\label{Liouville}
\end{equation}
where and hereafter we use the convention that $K(0)=0$. The law $\rho_N$ encompasses all the statistical information about the system. Given that it is set in $\Pi^{d \,N}$ with $N>>1$, the observable statistical information is typically contained in the marginals
\begin{equation}
\rho_{N,k}(t,x_1,\ldots,x_k)=\int_{\Pi^{d\,(N-k)}}\rho_N(t,x_1,\ldots, x_N)\,\ud x_{k+1}\ldots\,\ud x_{N}. \label{defmarginal}
\end{equation}
Our final goal is to obtain explicit bounds on $\rho_{N,k}-\bar\rho^{\otimes^k}$, where $\bar \rho^{\otimes^k} = \Pi_{i=1}^k \bar \rho(t, x_i).$  Those bounds will follow from a relative entropy estimate between $\bar\rho_N$ and a solution $\rho_N$ to \eqref{Liouville}.
But for this, we cannot use any weak solution to the Liouville \eqref{Liouville} and instead require
\begin{definition}[Entropy solution] A density $\rho_N\in L^1(\Pi^{d\,N})$, with $\rho_N\geq 0$ and $\int_{\Pi^{d\,N}} \rho_N\,\ud X^N=1$, is an entropy solution to Eq. \eqref{Liouville} on the time interval $[0,\ T]$, iff $\rho_N$ solves \eqref{Liouville} in the sense of distributions, and for $a.e.\ t\leq T$
\begin{equation}\begin{split}
&\int_{\Pi^{d\,N}}\rho_N(t,X^N)\,\log \rho_N(t,X^N)\,\ud X^N+\sigma_N\,\sum_{i=1}^N\int_0^t\int_{\Pi^{d\,N}}\frac{|\nabla_{x_i}\rho_N|^2}{\rho_N}\,\ud X^N\,\ud s\\
&\quad\leq \int_{\Pi^{d\,N}}\rho_N^0\,\log \rho_N^0\,\ud X^N\\
&\qquad-\frac{1}{N}\sum_{i,\;j=1}^N\int_0^t\int_{\Pi^{d\,N}}(\udiv F(x_i)+\udiv K(x_i-x_j))\,\rho_N\,\ud X^N\,\ud s,
\end{split}\label{entropydissip}
\end{equation}\label{entropysol}
where for convenience we use in the article the notation $X^N=(x_1, \cdots, x_N)$. 
\end{definition}
In general it can be difficult to obtain the well posedness of an advection-diffusion equation such as \eqref{Liouville} under very weak regularity of the advection field $K$, such as is our case here. We refer to \cite{Fig} for an example of such study.

In our case though, we do not need the well posedness and  
it is in fact straightforward to check that there exists at least one entropy solution to \eqref{Liouville}.
\begin{proposition}\label{entropybounds}
Assume that $\int_{\Pi^{d\,N}}\rho_N^0\,\log \rho_N^0<0$, $\sigma_N\geq \underline{\sigma}>0$, and that $F,\;\udiv F\in L^\infty$. Assume finally that $K\in \dot W^{-1,\infty}$ with as well $\udiv K\in \dot W^{-1,\infty}$.  Then there exists an entropy solution $\rho_N$ satisfying 
\begin{equation}
\begin{split}
&\int_{\Pi^{d\,N}}\rho_N(t,X^N)\,\log \rho_N(t,X^N)\,\ud X^N+\frac{\sigma_N}{2}\,\sum_{i=1}^N\int_0^t\int_{\Pi^{d\,N}}\frac{|\nabla_{x_i}\rho_N|^2}{\rho_N}\,\ud X^N\,\ud s\\
&\quad\leq \int_{\Pi^{d\,N}}\rho_N^0\,\log \rho_N^0\,\ud X^N+\frac{N\,t\,\|\udiv K\|_{\dot W^{-1,\infty}}^2}{2\,\underline{\sigma}}+N\,t\,\|\udiv F\|_{L^\infty}.
\end{split}\label{estentropybounds}
\end{equation}
Moreover for any $\phi\in L^2([0,\ T],\ W^{1,\infty}(\Pi^{2d}))$ with $\|\phi\|_{L^2_tW^{1,\infty}_x}\leq 1$
\begin{equation}\begin{split}
    &\frac{1}{\|K\|_{\dot W^{-1,\infty}}}\,\int_0^t \int_{\Pi^{2d}} \phi(s,x_1,x_2)\,K(x_1-x_2)\,\rho_{N,2}(s,x_1,x_2) \ud x_1 \ud x_2\, \ud s
    \\
& \leq 1+  t+ \frac{2}{N\,\underline{\sigma}}\int_{\Pi^{d\,N}}\rho_N^0\,\log \rho_N^0\,\ud X^N+\frac{t\,\|\udiv K\|_{\dot W^{-1,\infty}}^2}{\underline{\sigma}^2}+t\,\frac{ 2 \|\udiv F\|_{L^\infty}}{\underline\sigma},
\end{split}
\label{boundKrhoN}
\end{equation}
 so that the product $K\,\rho_N$ is well defined. 
\end{proposition}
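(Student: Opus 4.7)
The argument has three logical pieces: constructing an entropy solution, proving the entropy estimate \eqref{estentropybounds}, and proving the integrability estimate \eqref{boundKrhoN}. For existence I would regularize by convolution, $K_\varepsilon = K\star \chi_\varepsilon$; for smooth $K_\varepsilon$ the Liouville equation \eqref{Liouville} is a standard Fokker--Planck equation with bounded smooth drift, admitting a unique smooth strictly positive solution $\rho_N^\varepsilon$ for which the entropy identity holds as an equality. The a priori bounds derived below are uniform in $\varepsilon$, and in particular the Fisher information control supplies $\sqrt{\rho_N^\varepsilon}$ bounded in $L^2_t H^1_x$. Passing to the limit using lower semi-continuity of entropy and Fisher information preserves \eqref{entropydissip} as an inequality as $\varepsilon \to 0$, while the distributional convergence $K_\varepsilon \rho_N^\varepsilon \to K \rho_N$ (ensured by \eqref{boundKrhoN} applied to $\rho_N^\varepsilon$) shows that the limit $\rho_N$ solves \eqref{Liouville}.

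To derive \eqref{estentropybounds} I bound the right-hand side of \eqref{entropydissip}. The force contribution is immediate: $\bigl|\tfrac{1}{N}\sum_{i,j}\int \udiv F(x_i)\rho_N\bigr| = \bigl|\sum_i \int \udiv F(x_i)\rho_N\bigr| \leq N\|\udiv F\|_{L^\infty}$. The interaction contribution is handled by the $\dot W^{-1,\infty}$ decomposition $\udiv K = \udiv g$ with $\|g\|_{L^\infty} = \|\udiv K\|_{\dot W^{-1,\infty}}$; integration by parts in $x_i$ yields
$$\int \udiv K(x_i-x_j)\,\rho_N\,\ud X^N = -\int g(x_i-x_j) \cdot \nabla_{x_i}\rho_N\,\ud X^N,$$
and combining Cauchy--Schwarz against $\sqrt{\rho_N}$ with Young's inequality (parameter $\sigma_N$, using $\sigma_N \geq \underline\sigma$) gives
$$\frac{1}{N}\sum_{i,j} \left|\int \udiv K(x_i-x_j)\rho_N\right| \leq \frac{N\|\udiv K\|_{\dot W^{-1,\infty}}^2}{2\underline\sigma} + \frac{\sigma_N}{2}\sum_i \int \frac{|\nabla_{x_i}\rho_N|^2}{\rho_N}\,\ud X^N.$$
Absorbing half of the Fisher information term into the LHS of \eqref{entropydissip} and integrating in time delivers \eqref{estentropybounds}.

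For \eqref{boundKrhoN} the same device applies: write $K = \udiv V$ with $\|V\|_{L^\infty} = \|K\|_{\dot W^{-1,\infty}}$ and integrate by parts in $x_1$,
$$\int \phi\, K(x_1-x_2)\,\rho_{N,2}\,\ud x_1\ud x_2 = -\int V \cdot \nabla_{x_1}\phi\,\rho_{N,2}\,\ud x_1\ud x_2 - \int V\cdot \phi\,\nabla_{x_1}\rho_{N,2}\,\ud x_1 \ud x_2.$$
The first term integrated in time is bounded by $\|V\|_{L^\infty} \sqrt{t}\,\|\phi\|_{L^2_t W^{1,\infty}} \leq \|V\|_{L^\infty} \sqrt{t}$ via Cauchy--Schwarz. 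For the second term I rewrite $\nabla_{x_1}\rho_{N,2} = \int \nabla_{x_1}\rho_N\,\ud x_3\cdots\ud x_N$ and apply Cauchy--Schwarz against $\sqrt{\rho_N}$ to bound it by $\|V\|_{L^\infty}\|\phi(s)\|_{L^\infty}\sqrt{I_1(s)}$, where $I_i = \int|\nabla_{x_i}\rho_N|^2/\rho_N\,\ud X^N$. Another Cauchy--Schwarz in time together with indistinguishability of the marginals ($\int_0^t I_1\,\ud s = \tfrac{1}{N}\sum_i \int_0^t I_i\,\ud s$) reduces this to $\|V\|_{L^\infty}\sqrt{\tfrac{1}{N}\sum_i \int_0^t I_i\,\ud s}$, and \eqref{estentropybounds} controls the quantity under the square root. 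The elementary inequalities $\sqrt{t}\leq (1+t)/2$ and $\sqrt{x} \leq (1+x)/2$ then convert the two square-root expressions into the linear combination appearing in \eqref{boundKrhoN}.

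The main technical nuance is the careful bookkeeping of constants in Young's inequality so that exactly half of the Fisher information is absorbed on the LHS of \eqref{entropydissip}; this is what produces the sharp coefficient $\|\udiv K\|^2/(2\underline\sigma)$ in \eqref{estentropybounds}. For \eqref{boundKrhoN} the essential structural point is to invoke particle indistinguishability to replace the uncontrolled single-particle quantity $\int_0^t I_1$ by the average $\tfrac{1}{N}\sum_i \int_0^t I_i$, thereby eliminating the bad factor of $N$ that would otherwise appear, and this average is exactly what \eqref{estentropybounds} controls uniformly in $N$. The existence construction itself, although outside classical well-posedness theory for \eqref{Liouville}, reduces to these a priori estimates applied to the mollified densities $\rho_N^\varepsilon$ combined with a standard weak-compactness argument.
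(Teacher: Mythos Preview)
Your proof is correct and follows essentially the same strategy as the paper: regularize $K$, derive the a priori bounds uniformly in $\varepsilon$, and pass to the limit. The entropy estimate \eqref{estentropybounds} is obtained exactly as in the paper, by writing $\udiv K=\udiv g$, integrating by parts, and using Young's inequality to absorb half the Fisher information.

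The one place where your argument differs slightly from the paper is in the treatment of the second term in \eqref{boundKrhoN}. The paper keeps the computation at the level of the $2$-marginal, applying Cauchy--Schwarz against $\sqrt{\rho_{N,2}}$ to produce the marginal Fisher information $\int |\nabla_{x_1}\rho_{N,2}|^2/\rho_{N,2}$, and then invokes the sub-additivity result (Lemma 3.7 in \cite{HM}, via Jensen for the convex map $(a,b)\mapsto |a|^2/b$) to bound this by $\tfrac{1}{N}\sum_i\int |\nabla_{x_i}\rho_N|^2/\rho_N$. You instead unfold $\nabla_{x_1}\rho_{N,2}$ back to the full density $\rho_N$, apply Cauchy--Schwarz there against $\sqrt{\rho_N}$ to get $\sqrt{I_1}$ directly, and then use particle symmetry to write $I_1=\tfrac{1}{N}\sum_i I_i$. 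The two routes are equivalent (yours is the Jensen step in disguise) and produce the same constants; your version has the minor advantage of being self-contained and not needing an external reference.
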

Our method revolves around the control of the rescaled relative entropy
\begin{equation}
{\cal H}_N(\rho_N\;|\;\bar\rho_N)(t)=\frac{1}{N}\int_{\Pi^{d\,N}} \rho_N(t,X^N)\,\log \frac{\rho_N(t,X^N)}{\bar\rho_N(t,X^N)}\,\ud X^N, \label{scaled entropy}
\end{equation}
while our main result is the explicit estimate
\begin{theorem}
  Assume that $\udiv F\in L^\infty(\Pi^d)$, that $K\in \dot W^{-1,\infty}(\Pi^d)$ with  $\udiv K\in \dot W^{-1,\infty}$. Assume that $\sigma_N \geq \underline{\sigma} >0$.  Assume moreover that $\rho_N$ is an entropy solution to Eq. \eqref{Liouville} as per Def. \ref{entropysol}. Assume finally that $\bar\rho\in L^\infty([0,\ T],\ W^{2,p}(\Pi^d))$ for any $p<\infty$ solves Eq. \eqref{limitmeanfield} with $\inf \bar\rho>0$ and $\int_{\Pi^d} \bar\rho=1$.
Then
\[\begin{split}
 {\cal H}_N(\rho_N\,|\;\bar\rho_N)(t)\leq &e^{\bar M\,(\|K\|+\|K\|^2)\,t}\,
\bigg({\cal H}_N(\rho_N^0\,|\;\bar\rho_N^0)+\frac{1}{N}\\
&\qquad+\bar M(1+t\,(1+\|K\|^2))\,|\sigma-\sigma_N| \bigg),
\end{split}
\]
where we denote $\|K\|=\|K\|_{\dot W^{-1,\infty}}+\|\udiv K\|_{\dot W^{-1,\infty}}$ and $\bar M$ is a constant which only depends on
\[\begin{split}
\bar M\bigg(&d,\;\underline{\sigma},\;\inf \bar\rho,\;\|\bar\rho\|_{W^{1,\infty}},\;\sup_{p \geq 1} \frac{\|\nabla^2 \bar\rho\|_{L^p}}{p},\;\frac{1}{N}\int_{\Pi^{d\,N}} \rho_N^0\,\log \rho_N^0,\;\|\udiv F\|_{L^\infty}\Bigg).\\
\end{split}
\]
\label{maintheorem}
\end{theorem}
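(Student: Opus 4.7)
I would follow the relative entropy approach of \cite{JW1}: differentiate the rescaled entropy $\mathcal{H}_N(\rho_N\,|\,\bar\rho_N)$ in time, extract a non-positive Fisher-information dissipation to absorb gradient terms, and close a Gronwall-type inequality. Taking $\rho_N$ in the sense of Definition \ref{entropysol} and using \eqref{limitmeanfield} for each factor of $\bar\rho_N = \bar\rho^{\otimes N}$, the entropy inequality \eqref{entropydissip} combines with $\partial_t\log\bar\rho_N$ to yield, modulo terms of size $|\sigma-\sigma_N|$ carrying smooth weights in $\bar\rho$, an expression of the form
\[
\frac{\ud}{\ud t}\mathcal{H}_N \leq -\frac{\sigma_N}{2N}\sum_{i=1}^N\int\rho_N\,\bigl|\nabla_{x_i}\log(\rho_N/\bar\rho_N)\bigr|^2\,\ud X^N + R_N(t) + E_N(t),
\]
where $E_N(t)$ gathers the $|\sigma-\sigma_N|$ correction together with bounded contributions from $F$ and $\Delta\bar\rho/\bar\rho$, and the residual
\[
R_N(t) = \frac{1}{N^2}\sum_{i,j}\int\rho_N\,\nabla\log\bar\rho(x_i)\cdot\bigl(K(x_i-x_j)-(K\star\bar\rho)(x_i)\bigr)\,\ud X^N
\]
measures the failure of the empirical interaction to match the mean field. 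Since $K\in \dot W^{-1,\infty}$, I write $K=\udiv V$ with $\|V\|_{L^\infty}\leq \|K\|_{\dot W^{-1,\infty}}$ to shift a derivative either onto $\nabla\log\bar\rho(x_i)\,\bar\rho(x_i)$ (which is smooth by hypothesis) or onto $\nabla_{x_j}\rho_N$, the latter being absorbed by the Fisher dissipation via Cauchy--Schwarz.

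\textbf{Duality and exponential moments.} The remainder then takes the form $\int\rho_N\,\Psi_N\,\ud X^N$ for a bounded observable $\Psi_N$ built from $V$ and smooth derivatives of $\bar\rho$. For any $\eta>0$, Legendre duality between entropy and log-exponential gives
\[
\int\rho_N\,\Psi_N\,\ud X^N \leq \frac{1}{\eta}\,\mathcal{H}_N(\rho_N\,|\,\bar\rho_N) + \frac{1}{\eta N}\,\log\int\bar\rho_N\,\exp(\eta N\Psi_N)\,\ud X^N.
\]
Because $\bar\rho_N$ is a product measure, $\Psi_N$ decomposes into a ``linear'' piece $\frac{1}{N}\sum_i \psi(x_i)$, bounded by the law of large numbers at the exponential scale (Theorem \ref{MEI}), and a ``quadratic'' piece $\frac{1}{N^2}\sum_{i,j}\phi(x_i,x_j)$, bounded by the large-deviation estimate (Theorem \ref{MEII}). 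The convolution subtraction in $R_N$ together with the integration by parts ensures the required $\bar\rho$-mean-zero cancellations in the first argument of both $\psi$ and $\phi$, which is the hypothesis Theorem \ref{MEII} is tailored to.

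\textbf{Main obstacle and conclusion.} The delicate point is verifying that the $\phi$ produced by the $\dot W^{-1,\infty}$-integration by parts is exponentially integrable with respect to $\bar\rho\otimes\bar\rho$ and satisfies the precise cancellation required by Theorem \ref{MEII}. This is exactly where $\inf\bar\rho>0$, $\|\bar\rho\|_{W^{1,\infty}}$, and the growth control $\sup_p \|\nabla^2\bar\rho\|_{L^p}/p$ packaged into $\bar M$ come in, the last factor delivering Gaussian-type tail bounds on the relevant derivatives of $\log\bar\rho$. Choosing $\eta$ proportional to $(1+\|K\|^2)^{-1}$ balances the dissipation against the remainder and makes the prefactor $\|K\|+\|K\|^2$ explicit. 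Gronwall's inequality applied to the resulting differential inequality
\[
\frac{\ud}{\ud t}\mathcal{H}_N(\rho_N\,|\,\bar\rho_N) \leq \bar M\bigl(\|K\|+\|K\|^2\bigr)\,\mathcal{H}_N(\rho_N\,|\,\bar\rho_N) + \bar M\Bigl(\tfrac{1}{N} + (1+\|K\|^2)\,|\sigma-\sigma_N|\Bigr)
\]
then produces the stated exponential bound.
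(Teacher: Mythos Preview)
Your overall strategy matches the paper's: relative entropy evolution, Fisher dissipation, integration by parts via $K=\udiv V$, the entropy/log-exponential duality (Lemma~\ref{youngconvex}), Theorems~\ref{MEI} and~\ref{MEII}, and Gronwall. However, there are two genuine gaps in the execution.

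First, your residual $R_N$ omits the divergence term. The entropy evolution (Lemma~\ref{entropytimeevolution}) produces \emph{two} interaction terms: the one you wrote, and
\[
-\frac{1}{N^2}\sum_{i,j}\int_{\Pi^{dN}}\rho_N\,\bigl(\udiv K(x_i-x_j)-\udiv K\star\bar\rho(x_i)\bigr)\,\ud X^N.
\]
Since $\udiv K$ is only in $\dot W^{-1,\infty}$, not $L^\infty$, this term needs its own integration by parts and cannot simply be absorbed into $E_N$. This is why the paper splits $K=\bar K+\tilde K$ with $\tilde K\in L^\infty$, $\udiv\tilde K=\udiv K$, and $\udiv\bar K=0$: Lemma~\ref{interactionboundI} handles $\bar K$ (kernel in $\dot W^{-1,\infty}$, divergence bounded, in fact zero) and Lemma~\ref{interactionboundII} handles $\tilde K$ (kernel bounded, divergence in $\dot W^{-1,\infty}$). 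Without this decomposition, the $\phi$ you feed to Theorem~\ref{MEII} would contain an unbounded $\udiv K$ factor and the exponential-integrability hypothesis would fail.

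Second, two smaller points. The integration by parts is done in $x_i$, not $x_j$: writing $K_\alpha(x_i-x_j)=\partial_{x_i^\beta}V_{\alpha\beta}(x_i-x_j)$ and integrating against $\frac{\rho_N}{\bar\rho_N}\partial_{x_i^\alpha}\bar\rho_N$ produces \emph{both} a term with $\nabla_{x_i}(\rho_N/\bar\rho_N)$ (absorbed by Fisher) and a term with $\nabla^2_{x_i}\bar\rho/\bar\rho$ (the $\phi$ for Theorem~\ref{MEII}). And Theorem~\ref{MEII} requires cancellations in \emph{both} arguments, $\int\phi(x,z)\bar\rho(x)\,\ud x=0$ and $\int\phi(x,z)\bar\rho(z)\,\ud z=0$; the second follows from the convolution subtraction, but the first is the delicate one and relies on the specific structure $(V-V\star\bar\rho):\nabla^2\bar\rho/\bar\rho-\udiv L+\udiv L\star\bar\rho$ together with an integration by parts in $x$, which in turn only closes because $\udiv\bar K=0$ after the decomposition.
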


\begin{remark} The regularity assumptions for the limit $\bar \rho$ on the time interval $[0, T]$
can be established by propagating the regularities of the initial data. 
\end{remark}

\begin{remark}
There is no explicit regularity assumption on $F$ in the previous theorem, since $F$ does not appear explicitly in the evolution of $\mathcal{H}_N(\rho_N \vert \bar \rho_N)(t)$.  Nevertheless some regularity on $F$ is implicitly required, in particular to obtain $W^{2,p}$ solution $\bar\rho$ to \eqref{limitmeanfield}. The constant $\bar M$ depends on $\|\udiv F \|_{L^\infty}$ only in the case $\sigma_N \not \equiv\sigma$. See the proof of Lemma \ref{LemTimEvo} for details. 
\end{remark}
\begin{remark}
  While our results are presented for simplicity in the torus $\Pi^d$, they could be extended to any bounded domain $\Omega$ with appropriate boundary conditions. The possible extension to unbounded domains however appears highly non-trivial, in particular in view of the assumption $\inf \bar \rho>0$ which could not hold anymore.
  \end{remark}

\medskip

The proof of Theorem \ref{maintheorem} strongly relies on the properties of the relative entropy over tensorized spaces such as $\Pi^{d\,N}$. Those properties are also critical to derive appropriate control on the observables or marginals $\rho_{N,k}$. In particular the sub-additivity implies that the relative entropy of the marginals is bounded by the total relative entropy or
\begin{equation}
  {\cal H}_k(\rho_{N,k}\;|\;\bar\rho^{\otimes^k})=\frac{1}{k}\int_{\Pi^{d\,k}}\rho_{N,k}\, \log \frac{\rho_{N,k}}{\bar\rho^{\otimes^k}}\,\ud x_1\dots\ud x_k\leq {\cal H}_N(\rho_N\;|\;\bar\rho_N),\label{subadditive}
\end{equation}
for which we refer to \cite{HM,MM,MMW} where estimates quantifying the classical notion of propagation of chaos are thoroughly investigated.

It is then possible to derive from Theorem \ref{maintheorem} the strong propagation of chaos as per
\begin{corollary}\label{maincorollary}
Under the assumptions of Theorem \ref{maintheorem}, if ${\cal H}_N(\rho_N^0\,|\;\bar\rho_N^0)\rightarrow 0$ as $N\rightarrow \infty$, then over any fixed time interval $[0,\ T]$
\[
{\cal H}_N(\rho_N\,|\;\bar\rho_N)\longrightarrow 0,\quad as\ N\rightarrow\infty.
\]
As a consequence considering any finite marginal at order $k$, one has the strong propagation of chaos
\[
\|\rho_{N,k}-\bar\rho^{\otimes^k}\|_{L^\infty([0,\ T],\ L^1(\Pi^{d\, k}))}\longrightarrow 0.
\]
Finally in the particular case where $\sup_N N\,{\cal H}_N(\rho_N^0\,|\;\bar\rho_N^0)=H<\infty$, and where $\sup_N N\,|\sigma_N-\sigma|=S<\infty$, then one has that, for some constant $C$ depending only on $k,\;H,\;S,\;T$ and $\|K\|$ and $\bar M$ defined in Theorem \ref{maintheorem},
\begin{equation}
\|\rho_{N,k}-\bar\rho^{\otimes^k}\|_{L^\infty([0,\ T],\ L^1(\Pi^{d \,k}))}\leq \frac{C}{\sqrt{N}}.\label{optsqrtN}
\end{equation}
\end{corollary}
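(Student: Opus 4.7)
The corollary is a direct consequence of Theorem \ref{maintheorem}, the sub-additivity bound \eqref{subadditive}, and Pinsker's inequality; no new dynamical estimate is required. I would structure the argument in three short steps corresponding to the three assertions.

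\emph{Step 1: Entropy convergence.} Apply Theorem \ref{maintheorem} directly. On a fixed interval $[0,T]$ the prefactor $e^{\bar M(\|K\|+\|K\|^2)t}$ is bounded by a constant $C_1=C_1(T,K,\bar M)$. Each of the three terms in the bracket then vanishes as $N\to\infty$: the first by the hypothesis ${\cal H}_N(\rho_N^0\,|\,\bar\rho_N^0)\to 0$, the second trivially, and the third because the standing assumption of subsection \ref{non-vanish} gives $\sigma_N\to\sigma$. Hence ${\cal H}_N(\rho_N\,|\,\bar\rho_N)(t)\to 0$ uniformly on $[0,T]$.

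\emph{Step 2: Marginal convergence.} Combine the sub-additivity \eqref{subadditive} with Pinsker's inequality, which for probability densities $f,g$ on a common space asserts $\|f-g\|_{L^1}^2\le 2\int f\log(f/g)$. Applied to $\rho_{N,k}(t,\cdot)$ and $\bar\rho^{\otimes k}(t,\cdot)$ on $\Pi^{d\,k}$, and unwinding the $1/k$ normalisation used in defining ${\cal H}_k$, one obtains
\[
\|\rho_{N,k}(t)-\bar\rho^{\otimes k}(t)\|_{L^1(\Pi^{d\,k})}^2\le 2\,k\,{\cal H}_k(\rho_{N,k}\,|\,\bar\rho^{\otimes k})(t)\le 2\,k\,{\cal H}_N(\rho_N\,|\,\bar\rho_N)(t).
\]
Taking supremum in $t\in[0,T]$ and invoking Step 1 yields the claimed $L^\infty_tL^1_x$ convergence of every fixed marginal.

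\emph{Step 3: The $1/\sqrt{N}$ rate.} Under the sharpened assumptions $N\,{\cal H}_N(\rho_N^0\,|\,\bar\rho_N^0)\le H$ and $N|\sigma_N-\sigma|\le S$, Theorem \ref{maintheorem} gives ${\cal H}_N(\rho_N\,|\,\bar\rho_N)(t)\le C'/N$ with
\[
C'=e^{\bar M(\|K\|+\|K\|^2)T}\Bigl(H+1+\bar M\bigl(1+T(1+\|K\|^2)\bigr)S\Bigr),
\]
and plugging this into the bound of Step 2 produces \eqref{optsqrtN} with $C=\sqrt{2kC'}$. No substantial obstacle arises: each ingredient is either already established in the excerpt (Theorem \ref{maintheorem} and \eqref{subadditive}) or is a classical entropy inequality. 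The only care required is to keep track of the $1/N$ and $1/k$ normalisations of ${\cal H}_N$ and ${\cal H}_k$ when invoking Pinsker.
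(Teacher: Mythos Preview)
Your proof is correct and follows exactly the same approach as the paper: apply Theorem \ref{maintheorem}, then the sub-additivity bound \eqref{subadditive}, then the Csisz\'ar--Kullback--Pinsker inequality $\|f-g\|_{L^1(\Pi^{dk})}\le\sqrt{2k\,{\cal H}_k(f\,|\,g)}$. You have simply spelled out in three steps what the paper compresses into two sentences, including the explicit tracking of the $1/k$ normalisation and the constant $C'$, which the paper leaves implicit.
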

\begin{remark}
The rate of convergence in $1/\sqrt{N}$ in \eqref{optsqrtN} is widely considered to be optimal as it corresponds to the size of stochastic fluctuations. We refer for example to \cite{Mal} where entropy methods are used in this context for smooth interaction kernels; see also the prior \cite{BRTV} and \cite{BGM,CGM}.   
\end{remark}
\begin{proof}
  Corollary \ref{maincorollary} follows directly from Theorem \ref{maintheorem} by using inequality \eqref{subadditive} and the Csisz\'ar-Kullback-Pinsker inequality (see for instance \cite{V}) for any $f$ and $g$ functions on $\Pi^{d\,k}$
  \[
\|f-g\|_{L^1(\Pi^{d\,k})}\leq \sqrt{ 2 k \, {\cal H}_k(f\;|\;g)}. 
  \]   
\end{proof} 

\begin{remark} Theorem \ref{maintheorem} also provides the rate of convergence in the Wasserstein distance by a Talagrand-type inequality (See for instance \cite{BobkovG,BolleyVillani})
\[
W_p( \rho_{N, k}, \bar \rho^{\otimes^k} ) \leq C(\bar \rho, p) \left( k \mathcal{H}_k (\rho_{N, k} \vert \bar \rho^{\otimes^k})\right)^{\frac{1}{2p}}
\]
for any $p \geq 1$, since the underlying space $\Pi^d$ is compact. 
\end{remark}
The starting  steps in the proof of Theorem \ref{maintheorem}, such as the relative entropy and the reduction to a modified law of large numbers, had already been exposed in \cite{JW1}. However the present contribution expands much on the basic ideas and techniques introduced in \cite{JW1}: First we make better use of the diffusion, which was instead mostly considered as a perturbation in \cite{JW1}. This is the main reason why we are essentially able to {\em gain one full derivative} in our assumption on $K$ with respect to the $K\in L^\infty$ in \cite{JW1}.

The main technical contribution in the present article, namely the modified law of large numbers stated in Theorem \ref{MEII}, is considerably more difficult to prove than any equivalent in \cite{JW1}. This has lead to several new ideas in the combinatorics approach, detailed in the proof of Theorem \ref{MEII} in section \ref{ProofMEII}.
Theorem \ref{MEII} corresponds to classical large deviation estimates for instance in \cite{Arous} but for non-continuous potentials, which is new in the literature. We believe that it can be of further and wider use.

The importance of law of large numbers for the propagation of chaos or the mean field limit  has of course long been recognized, at least since Kac, see \cite{Kac} or \cite{Sznitman}. We also refer to \cite{GolMouRic} for an example where the classical law of large numbers is used but which is limited to Lipschitz kernels $K$.

The relative entropy at the level of the Liouville equation does not seem to have been widely used for  mean field limits yet. The relative entropy method, initiated in  \cite{Yau} in the context of hydrodynamics of Ginzburg-Landau and now has been extensively used for hydrodynamics limits (see chapter 6 in \cite{KL}), is maybe the closest to the approach developed here. A similar approach, namely a modulated energy argument, was introduced in \cite{Serfaty} to investigate  mean field limits for quantum vortices (see also \cite{DuerSer}), and has been used in \cite{Duer} for gradient flows with Riesz-like potentials and in \cite{SerfatyCou} for 1st order Coulomb flows.  We also refer to \cite{FoJo} for a different, trajectorial, view on the role of the entropy in SDEs.   

%
\subsection{Applications\label{secapplications}} \label{ApplicationSubsection}
We delve in this section into some examples of kernels $K$ that our method can handle and discuss at the same time where our result stands in comparison to the existing literature. In general quantitative estimates of propagation of chaos were previously only available for smooth, Lipschitz, kernels $K$ such as in the classical result \cite{McKean}; see also \cite{BRTV,BGM,CGM,Mal} for more on the classical Lipschitz case. Gronwall-like estimates with Lipschitz force fields, but a fixed number of SDEs,  were also at the basis of \cite{Ito}.  

System \eqref{SDE1st} retains simple additive interactions, contrary to the more complex structure found for example in \cite{MT1,MT2}; but it still includes a large range of first order models, such as swarming, opinion dynamics,  aggregation equations, neuroscience models,  see for instance \cite{BoCaCa,CaChHa,CS2,FS} or \cite{Krause} and the reference therein.  The propagation of chaos of stochastic system \eqref{SDE1st} is also  closely related to complex geometry, which has been investigated in \cite{BO,Berman}. The Dyson Brownian motions, i.e. \eqref{SDE1st} with $K(x) = 1/x$ in 1D, or more general mean filed models at low temperature, are also connected to  random matrix theory   \cite{Andersonetc,ErdosYau}...
The list of examples given below is hence by no means exhaustive and we refer to our recent survey \cite{JW2} for a more thorough discussion of current important questions.

\begin{itemize}
\item The 2d viscous vortex model where $K$ satisfies \eqref{biot-savart}. As mentioned in the introduction,
  the mean field limit \eqref{limitmeanfield} is then the 2d incompressible Navier-Stokes equation written in vorticity form, Eq. \eqref{vorticity}. We can write
  \[
  K=\udiv V,\quad V=\left[\begin{matrix}
  &-\phi\,\mbox{arctan} \frac{x_1}{x_2}+\psi_1&0\\
  &0&\phi\, \mbox{arctan} \frac{x_2}{x_1}+\psi_2
  \end{matrix}\right],
\]
where one can choose $\phi$ smooth with compact support in the representative
$(-1/2,\;1/2)^2$ of $\Pi^2$ and $(\psi_1,\;\psi_2)$ a corresponding smooth correction to periodize $V$. Therefore $K$ satisfies the assumptions of Theorem \ref{maintheorem}.

The convergence of the systems of point vortices \eqref{SDE1st} to the limit \eqref{vorticity} had first been established in \cite{Osada} for a large enough viscosity $\sigma$. The well posedness of the point vortices dynamics has been proved globally in \cite{Osada86}; see also  \cite{FGP}.  Finally the convergence to the mean field limit has been obtained with any positive viscosity $\sigma$ in the recent \cite{FHM}.

However those results rely on a compactness argument based on a control of the singular interaction provided by the dissipation of entropy in the system. 

As far as we know, this article is {\em the first to provide a quantitative rate of propagation of chaos  for the 2d viscous vortex model}. 

%
\medskip
\item Hamiltonian structure. If the dimension $d$ is even then the previous example can be generalized to include any Hamiltonian structure. In that case one has $d=2n$, $x=(q,p)$ with $q,p\in \Pi^{n}$ and for some Hamiltonian $H\;:\Pi^{2n}\longrightarrow \R$,
  \[
K=(\nabla_p H,\;-\nabla_q H).
\]
Theorem \ref{maintheorem} now applies if $H\in L^\infty(\Pi^{2n})$, though this may not be the optimal condition (see the discussion below). The theorem provides propagation of chaos for such systems with diffusion with much weaker assumptions than any comparable result in the literature.

We are nevertheless somewhat limited by our framework here. One would for example typically want to apply this to the classical Newtonian dynamics where $H=\sum_i p_i^2/2+\frac{1}{N}\sum_{i,j} V(q_i-q_j)$. This is formally easy by choosing the appropriate function $F$ in the system of particles \eqref{SDE1st}.

The first issue is that the momentum should be unbounded instead of having $p\in \Pi^n$; as we mentioned in one of the remarks after Theorem \ref{maintheorem}, such an extension of our result to $p\in\R^n$ for example would be non-trivial...

The second issue concerns the diffusion which for such models usually applies only to the momentum. This leads to a degenerate diffusion whereas we absolutely require it in every variable.
\medskip
\item Collision-like interactions. We can even handle extremely singular interactions where some sort of collision event occurs at some fixed horizon. Consider for example any function $\phi\in L^1(\Pi^d)$, any smooth field $M(x)$
  of matrices and define
  \[
K=\udiv(M\,\ind_{\phi\leq 0}),\quad\mbox{or}\ K_\alpha(x)=\sum_\beta \partial_\beta (M_{\alpha\beta}(x)\,\ind_{\phi(x)\leq 0}).
  \]
It is straightforward to choose $M$ s.t. $\udiv K\in \dot W^{-1,\infty}$ or even $\udiv K=0$: A simple example is simply to take $M$ anti-symmetric.  As $M\,\ind_{\phi\leq 0}\in L^\infty$, Theorem \ref{maintheorem} applies. This particular choice of $K$ means that two particles $i$ and $j$ will interact exactly when $\phi(X_i-X_j)=0$. An obvious example is $\phi(x)=|x|^2-(2R)^2$ in which case the particles can be seen as balls of radius $R$ which interact when touching.

  But in the context of swarming, one could have birds, or other animals, which interact as soon as they can see each other; this is different from the cone of vision type of interaction found for example in \cite{CCHS} where the interaction is much less singular (bounded).  Micro-organisms such as bacteria may also have complicated, non-smooth shapes. In all those cases $\{\phi\leq 0\}$ is not a ball in general and may even be a singular set.

  Since $M(x)$ is smooth, one could interpret $K$ as being supported on the measure $\delta_{\phi=0}$. But in fact we do not need any regularity on $\phi$, not even $\phi\in BV$ and here $K$ may not even be a measure...
\medskip
\item Gradient flow structure. The dual to the Hamiltonian case is to take $K=\nabla \psi$ for some potential $\psi$. This lets us see the system of particles \eqref{SDE1st} as a gradient flow with diffusion and it endows the mean field limit \eqref{limitmeanfield} with the derived and nonlinear gradient flow structure.
  
  When $\psi$ is convex, but not necessarily smooth, it is possible to strongly use this gradient flow structure. This is in particular the key to obtain the well posedness of Eq. \eqref{limitmeanfield}, even without diffusion, as in \cite{CDFLS,CLM} and in \cite{BoCaCa} for the mean field limit.

  However it does not seem easy for our approach to fully make use of such gradient flows. This is seen on the assumptions of Theorem \ref{maintheorem} where having $K\in \dot W^{-1,\infty}$ is not very demanding, $\psi\in L^\infty$ would be enough, while the condition $\udiv K\in  \dot W^{-1,\infty}$ actually forces us to consider Lipschitz potentials $\psi$. Of course any $\psi$ convex is Lipschitz so that Theorem \ref{maintheorem} still extends the known theory for general $\psi$. But it is clearly not performing as well as in the Hamiltonian case. 

A very good example of this is the 2d Patlak-Keller-Segel model of chemotaxis where one would like to have $K=\alpha\,x/|x|^2+K_0(x)$. This choice of $K$ is just a rotation of $\pi/2$ from the 2d Navier-Stokes kernel given by \eqref{biot-savart}. Therefore we still have that $K\in \dot W^{-1,\infty}$ by using a rotation of the matrix $V$ that we wrote in the Navier-Stokes setting. But unfortunately $\udiv K$ is now one full derivative away from $ \dot W^{-1,\infty}$ and Theorem \ref{maintheorem} cannot be applied.  

By studying the specific properties of the system though, a convergence result to measure-valued solutions was obtained in  \cite{HaSc} while the convergence to weak solutions was achieved in \cite{FJ} (see also \cite{GQ} for the sub-critical case). We also refer to \cite{LY} for general Coulomb interactions. Those results are not quantitative though and a major open problem remains to find an equivalent of Theorem \ref{maintheorem} in this case.
\end{itemize}

\bigskip

We wish to conclude this subsection about kernels $K$ to which Theorem \ref{maintheorem} applies, by discussing more in details the assumption $K\in \dot W^{-1,\infty}$. 

We first come back to the vortex dynamics for 2d Navier-Stokes and the kernel $K$ given by the Biot-Savart law \eqref{biot-savart}. Since $\udiv K=0$, the classical way to represent $K$ is by $K=\mbox{curl}\,\psi$ with
\[
\psi(x)= \alpha\,\log |x|+\psi_0(x),
\]
with again $\psi_0$ a smooth correction to periodize $\psi$. Obviously $\psi$ is not bounded which at first glance suggests that $K$ does not belong to $ \dot W^{-1,\infty}$. This is incorrect as the ``right'' choice of $V$ above demonstrates but it means that {\em knowing whether $K\in \dot W^{-1,\infty}$ is not as simple as it may seem}.

The distinction is rather technical but it is {\em critical} for us as it allows us to handle the crucial example of the vortex model. It also turns out to be {\em connected with a fundamental difficulty} in our proof. Our estimates directly use a representation $K=\udiv V$ and the most difficult term would vanish if $V$ were anti-symmetric, which is the case if we take $K=\mbox{curl}\,\psi$. {\em The fact that we cannot take $K=\mbox{curl}\,\psi$ with $\psi\in L^\infty$ is responsible for the main technical difficulty in this article} and in particular this is what requires Theorem \ref{MEII} whose proof takes all of section \ref{ProofMEII}. We refer to the more specific comments that we make in subsection \ref{sketch}.

\medskip

In general the study of the $K$ for which there exists a matrix field $V\in L^\infty$ s.t. $\udiv V=K$ turns out to be a very complex mathematical question. This can be done coordinate by coordinate obviously so the question is equivalent to finding the scalar field $\phi$ for which there exists a vector field $u\in L^\infty$ s.t. $\udiv u=\phi$. 

The difficulty is that for a given $K$, there {\em does not exist a unique matrix field $V$ s.t. $\udiv V=K$}. Of course in dimension $d=2$ if $\udiv K=0$, then there exists a unique $\psi$ up to a constant,  s.t. $K=\mbox{curl}\,\psi$. In dimension $d>2$, if $\udiv K=0$, there exists an anti-symmetric matrix $V$ s.t. $K=\udiv V$. The anti-symmetric matrix $V$ is not unique in general though with the well known issue of the gauge choice for vector potential if $d=3$.

But even in dimension $2$, there is no reason why $\psi\in L^\infty$ if $K\in \dot W^{-1,\infty}$. This is indeed connected to the fact that the Riesz transforms are unbounded on $L^\infty$ and the kernel K of \eqref{biot-savart} is the classical example of this. Instead one only has in general that $\psi\in BMO$. 

However even in this simple case, {\em it is not known if $\psi\in BMO$ is equivalent to $K\in \dot W^{-1,\infty}$}. This question is connected to the classical representation of $BMO$ functions in \cite{FeSt}. For any $\psi\in BMO$, \cite{FeSt} showed that there exists $\psi_0,\;\psi_1,\;\psi_2\in L^\infty$ s.t. $\psi=\psi_0+R_1\,\psi_1+R_2\,\psi_2$ with $R_i$, $i=1,\;2$, the Riesz transforms. If it were always possible to take $\psi_0=0$ then we would have the equivalence but that seems (at best) highly non-trivial.

Instead the positive results that we have are much more recent and limited. This line of investigation was started in the seminal \cite{BoBr} which proved that if $K\in L^d(\Pi^d)$ then $K\in \dot W^{-1,\infty}(\Pi^d)$. If $K$ is known to be a signed measure then this was extended in \cite{PhTo} to find that $K=\udiv V$ with $V\in L^\infty$ iff there exists $C$ s.t. for any Borel set $U$
\begin{equation}
\left|\int_U K(dx)\right|\leq C\,|\partial U|.\label{phuctorres}
\end{equation}
This result in \cite{PhTo} hence has the direct consequence
\begin{proposition}
If $d>1$ and $K$ belongs to the Lorentz space $L^{d,\infty}(\Pi^d)$ then $K \in \dot W^{-1,\infty}$.
\label{lorentz}
\end{proposition}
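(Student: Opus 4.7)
The plan is to invoke the Phuc--Torres criterion \eqref{phuctorres} stated just above the proposition and reduce the task, componentwise, to establishing
\[
\left|\int_U K\,dx\right| \leq C\,\|K\|_{L^{d,\infty}}\,|\partial U|
\]
for every Borel set $U\subset\Pi^d$. Once this is shown, the criterion \eqref{phuctorres} immediately produces a bounded potential $V$ with $\udiv V=K$ and hence places $K$ in $\dot W^{-1,\infty}$.

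The first step is to bound the left-hand side via H\"older's inequality in Lorentz spaces. The dual pairing $L^{d,\infty}\times L^{d',1}$ is bounded with $d'=d/(d-1)$ (which requires $d>1$, so that $d'<\infty$), and one has the elementary estimate $\|\chi_U\|_{L^{d',1}}\leq C_d\,|U|^{1/d'}$. This yields
\[
\left|\int_U K\,dx\right|\le \|K\,\chi_U\|_{L^1}\leq C_d\,\|K\|_{L^{d,\infty}}\,|U|^{(d-1)/d}.
\]

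The second step converts the volume term $|U|^{(d-1)/d}$ into the perimeter $|\partial U|$. Since $K\in \dot W^{-1,\infty}$ has zero mean, $\int_U K=-\int_{\Pi^d\setminus U} K$ and $\partial U=\partial(\Pi^d\setminus U)$, so we may assume $|U|\leq 1/2$ after possibly replacing $U$ by its complement. In that regime the standard isoperimetric inequality on the torus gives $|U|^{(d-1)/d}\leq C_d\,|\partial U|$, and combining with the H\"older bound produces the Phuc--Torres estimate \eqref{phuctorres} with constant $C=C_d\,\|K\|_{L^{d,\infty}}$.

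I do not anticipate any substantial obstacle: both ingredients, the Lorentz--H\"older inequality and the isoperimetric inequality on $\Pi^d$, are classical and the zero-mean reduction is automatic from $\partial U=\partial(\Pi^d\setminus U)$. The only mildly delicate point is that $L^{d,\infty}$ is not normable and one must work with the quasi-norm, but the required duality with $L^{d',1}$ (and the indicator estimate $\|\chi_U\|_{L^{d',1}}\sim |U|^{1/d'}$) are standard facts of Lorentz-space theory and do not cause any difficulty.
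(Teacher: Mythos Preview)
Your proof is correct and follows the same overall strategy as the paper: verify the Phuc--Torres criterion \eqref{phuctorres} by first establishing $\int_U |K|\,dx \leq C\,|U|^{(d-1)/d}$ and then invoking the isoperimetric inequality on $\Pi^d$. The only difference lies in how the intermediate bound is obtained: the paper carries out an explicit dyadic layer-cake decomposition of $\int_U |K|\,dx$ using the distribution-function definition of $L^{d,\infty}$, whereas you invoke the Lorentz--H\"older duality $L^{d,\infty}\times L^{d',1}\to L^1$ together with $\|\chi_U\|_{L^{d',1}}\sim |U|^{1/d'}$. These are equivalent---the paper's dyadic computation is essentially a self-contained proof of that duality applied to an indicator---so your version is slightly more abstract while the paper's is more elementary and self-contained.

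Your treatment of the isoperimetric step is in fact more careful than the paper's: you correctly note that on the torus one must restrict to $|U|\leq 1/2$ (else $U=\Pi^d$ is a counterexample), and you justify this reduction via the complement symmetry and the zero-mean of $K$. The paper states the isoperimetric inequality without this caveat. One small phrasing issue: you write ``since $K\in\dot W^{-1,\infty}$ has zero mean,'' but membership in $\dot W^{-1,\infty}$ is what you are proving; the zero-mean hypothesis should instead be stated as part of the assumption (it is built into the paper's definition of $\dot W^{-1,\infty}$, so any $K$ for which the conclusion is meaningful automatically satisfies $\int K=0$).
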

\begin{proof}
Assuming $K\in L^{d,\infty}$ then for a constant $C$, we have that
\[
|\{x\in\Pi^d,\;|K(x)|\geq M\}|\leq \frac{C}{M^d}.
\]
Decompose now dyadically
\[
\int_U |K(x)|\,dx\leq |U|+\sum_{k\geq 0} 2^{k+1}\,|\{x\in U,\;|K(x)|\geq 2^k\}|.
\]
Define $k_0$ s.t. $2^{-d\,(k_0+1)}\leq |U|\leq 2^{-d\,k_0}$ and bound
\[\begin{split}
&|\{x\in U,\;|K(x)|\geq 2^k\}|\leq |U|\ \mbox{for}\ k\leq k_0,\\
&|\{x\in U,\;|K(x)|\geq 2^k\}|\leq |\{x\in \Pi^d,\;|K(x)|\geq 2^k\}|\leq \frac{C}{2^{d\,k}}\ \mbox{for}\ k> k_0.
\end{split}
\]
This leads to
\[\begin{split}
\int_U |K(x)|\,dx&\leq |U|+\sum_{k\leq k_0} 2^{k+1}\,|U|+C\,\sum_{k>k_0} 2^{(1-d)\,k+1} \\
&\leq |U|+2^{k_0+2}\,|U|+C\,2^{(1-d)\,k_0+1}\leq C'\,|U|^{\frac{d-1}{d}},
\end{split}
\]
by using the definition of $k_0$. By the isoperimetric inequality, there exists a constant $C_d$ s.t. $|U|^{\frac{d-1}{d}}\leq C_d\,|\partial U|$ so that we  verify the condition \eqref{phuctorres} which concludes the proof.
\end{proof} 
Prop. \ref{lorentz} not only applies to $K$ given by \eqref{biot-savart} but proves in general that any $K$ with  $|K(x)|\leq C/|x|$ belongs to $ \dot W^{-1,\infty}$. This in particular implies that  our result in the case with vanishing viscosity, Theorem  \ref{thmvanishingviscosity} in the next subsection, is indeed weaker that Theorem  \ref{maintheorem} when viscosity does not degenerate.

The original result in \cite{BoBr} is not constructive, and it is even proved that the $V\in L^\infty$ s.t. $K=\udiv V$ cannot be obtained linearly from $K$. The development of constructive algorithms to obtain $V$ is a current important  field of research, see \cite{Tad}.

\subsection{The case with vanishing diffusion \label{vanishingdiffusion}}
While we are mostly interested in Eq. \eqref{Liouville} when the viscosity does not asymptotically vanishes, a nice (and essentially free) consequence of the method developed here is to also provide a result with vanishing viscosity.

The result is of course weaker and requires that $K\in L^\infty$ with $\udiv K\in L^\infty$ or that  $|K(x)| \leq C/|x|$ but $K$ is anti-symmetric ($K(-x) = -K(x)$) also with $\udiv K \in L^\infty$.  Obtaining an entropy solution to \eqref{Liouville} in the sense of Def. \ref{entropysol} is even more straightforward in these cases as there is no need for integration by parts. However, we emphasize that in the case that $K$ is anti-symmetric and $|K(x)| \leq C/|x|$, we should  understand the product $K \rho_N$ using the classical observation from Delort \cite{Delort} 
\[
\begin{split}
& \int_{\Pi^{2d}} \phi(t, x_1, x_2)  K(x_1-x_2) \rho_{N, 2}(t, x_1, x_2) \ud x_1 \ud x_2 \\
& = \frac{1}{2} \int_{\Pi^{2d}} \left( \phi(t, x_1, x_2) - \phi(t, x_2, x_1) K(x_1 -x_2) \rho_{N, 2}(t, x_1, x_2) \right) \ud x_1 \ud x_2  \\
& \leq C \|\nabla \phi \|_{L^\infty},  \\
\end{split}
\]
where the equality is ensured by the is anti-symmetry of $K$ and the symmetry of $\rho_N$ and therefore $\rho_{N, 2}$.

 Moreover we also directly obtain the following bound, which replaces in that case the one provided by Prop. \ref{entropybounds},
\begin{equation}\begin{split}
&\int_{\Pi^{d\,N}}\rho_N(t,X^N)\,\log \rho_N(t,X^N)\,\ud X^N+\sigma_N\,\sum_{i=1}^N\int_0^t\int_{\Pi^{d\,N}}\frac{|\nabla_{x_i}\rho_N|^2}{\rho_N}\,\ud X^N\,\ud s\\
    &\quad\leq \int_{\Pi^{d\,N}}\rho_N^0\,\log \rho_N^0\,\ud X^N+N\,t\,\left(\|\udiv K\|_{L^\infty}+\|\udiv F\|_{L^\infty}\right).\\
\label{entropybounds2}
  \end{split}
\end{equation}
Under those stronger assumptions on $K$, we have the following result
\begin{theorem}
  Assume that $\udiv F\in L^\infty(\Pi^d)$, $\udiv K\in L^\infty(\Pi^d)$ and that either $K\in L^\infty(\Pi^d)$ or for $d \geq 2$, $K(-x)=-K(x)$ with $|x|\, K(x) \in L^\infty(\Pi^d)$. Assume moreover that $\rho_N$ is an entropy solution to Eq. \eqref{Liouville} as per Def. \ref{entropysol}. Assume finally that $\bar\rho\in L^\infty([0,\ T],\ W^{1,\infty}(\Pi^d))$ solves Eq. \eqref{limitmeanfield} with $\int_{\Pi^d} \bar\rho=1$.
Then  
\begin{equation}\label{Theorem2EvoEntropy}\begin{split}
 {\cal H}_N(\rho_N\,|\;\bar\rho_N)(t)\leq &e^{\bar M_2\,\|K\|_\infty\,t}\,
\bigg({\cal H}_N(\rho_N^0\,|\;\bar\rho_N^0)+\frac{1}{N}\\
&\qquad+\bar M_2 \,(1 + \|K\|_{\infty} t) \,|\sigma-\sigma_N| \bigg),
\end{split}
\end{equation}
where we now denote $\|K\|_\infty=\|K\|_{L^{\infty}}+\|\udiv K\|_{L^{\infty}}$ in the general case and  $\|K\|_\infty=\||x|\,K\|_{L^{\infty}}+\|\udiv K\|_{L^{\infty}}$ for the anti-symmetric case while $\bar M_2$ is a constant which only depends on
\[\begin{split}
&\bar M_2\bigg( \sigma, \;  \|\log \bar \rho\|_{BMO}, \; \sup_{p \geq 1} \frac{\|\nabla \log \bar\rho\|_{L^p(\bar \rho \ud x )}}{p}, \; \frac{1}{N} \int_{\Pi^{d \, N}} \rho_N^0 \log \rho_N^0, \; \|\udiv F\|_{L^\infty}\\
&\qquad \| \bar \rho\|_{L^\infty},\; \sup_{p \geq 1}  \frac{\|\nabla^2 \log \bar\rho\|_{L^p(\bar \rho \ud x )}}{p}, \; \| \log \bar \rho\|_{W^{1, \infty}} \bigg).\\
\end{split}
\]
  \label{thmvanishingviscosity}
\end{theorem}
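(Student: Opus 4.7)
The plan is to follow the same relative-entropy backbone used for Theorem \ref{maintheorem}, but to replace the heavy machinery of Theorem \ref{MEII} by the simpler law of large numbers at exponential scale (Theorem \ref{MEI}), since now $K$ itself, rather than a primitive of $K$, is bounded. I would start by differentiating the rescaled entropy ${\cal H}_N(\rho_N\,|\,\bar\rho_N)(t)$ in time, using the Liouville equation \eqref{Liouville} for $\rho_N$ together with \eqref{limitmeanfield} for $\bar\rho$ (tensorized to $\bar\rho_N$). A standard computation, legitimized by the entropy bound \eqref{entropybounds2} (which plays here the role Prop. \ref{entropybounds} played earlier), produces a dissipation term $-\frac{\sigma_N}{N}\sum_i\int \bar\rho_N\,|\nabla_{x_i}(\rho_N/\bar\rho_N)|^2/(\rho_N/\bar\rho_N)\,\ud X^N$ plus two source terms: the interaction-cancellation term
\begin{equation*}
\mathcal R_1=-\frac{1}{N^2}\sum_{i,j}\int \rho_N\,\bigl(K(x_i-x_j)-K\star\bar\rho(x_i)\bigr)\cdot\nabla\log\bar\rho(x_i)\,\ud X^N,
\end{equation*}
together with a viscosity-mismatch term $\mathcal R_2$ of size $|\sigma-\sigma_N|$ involving $\Delta_{x_i}\log\bar\rho$.

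The next step is to rewrite $\mathcal R_1=\int \rho_N\,\bigl(\frac{1}{N}\sum_{i,j}\phi(x_i,x_j)\bigr)\ud X^N$ with $\phi(x,y)=(K(x-y)-K\star\bar\rho(x))\cdot\nabla\log\bar\rho(x)$ and to exploit the cancellation $\int \phi(x,y)\bar\rho(y)\ud y=0$. The standard duality
\begin{equation*}
\int \rho_N\,G\,\ud X^N\leq \frac{1}{\eta}\,{\cal H}_N(\rho_N\,|\,\bar\rho_N)+\frac{1}{\eta N}\log\int \exp(\eta N\,G)\,\bar\rho_N\,\ud X^N
\end{equation*}
reduces everything to producing an exponential-moment bound for $\frac{1}{N}\sum_{i,j}\phi(x_i,x_j)$ under $\bar\rho^{\otimes N}$, which is exactly the content of Theorem \ref{MEI}, provided $\phi$ is bounded (up to an appropriate norm controlled by $\|K\|_\infty$, $\|\log\bar\rho\|_{W^{1,\infty}}$ and the BMO and weighted $L^p$ norms entering $\bar M_2$). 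In the plain $K\in L^\infty$ case this is immediate.

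In the anti-symmetric case $|x|K\in L^\infty$, $\phi$ itself blows up like $1/|x-y|$, so a Delort-type symmetrization is required, exactly in the spirit of the display preceding \eqref{Theorem2EvoEntropy}. Writing $\frac{1}{N}\sum_{i,j}\phi(x_i,x_j)=\frac{1}{2N}\sum_{i,j}\bigl(\phi(x_i,x_j)-\phi(x_j,x_i)\bigr)$ modulo the already-controlled diagonal term (handled via $K(0)=0$ and $\|\bar\rho\|_{L^\infty}$), the anti-symmetry of $K$ combined with the Lipschitz regularity of $\nabla\log\bar\rho$ yields $|\phi(x,y)-\phi(y,x)|\leq C\,|x-y|\,|K(x-y)|\leq C'$, so the symmetrized sum is again bounded and Theorem \ref{MEI} applies. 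The term $\mathcal R_2$ is handled as in the proof of Theorem \ref{maintheorem}, producing a contribution $\bar M_2(1+\|K\|_\infty t)\,|\sigma-\sigma_N|$ after bounding $\int \bar\rho_N\,(\Delta\log\bar\rho)^2$ by the weighted moments listed in $\bar M_2$. Collecting everything one obtains the differential inequality $\tfrac{d}{dt}{\cal H}_N\leq C\|K\|_\infty {\cal H}_N+C/N+C|\sigma-\sigma_N|(1+\|K\|_\infty)$, and Gronwall yields \eqref{Theorem2EvoEntropy}.

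The main obstacle I expect is the careful bookkeeping in the anti-symmetric case: one must simultaneously symmetrize in $(i,j)$, separate the $i=j$ diagonal, and make sure the Delort cancellation is performed \emph{inside} the exponential so that Theorem \ref{MEI} can still be applied to a bounded test function, since the duality inequality is not preserved by reordering. This is the only genuinely new input compared to the framework of Theorem \ref{maintheorem}, but it sidesteps entirely the combinatorial analysis of Theorem \ref{MEII}, which explains why the statement here is formulated under the strictly stronger hypotheses $K\in L^\infty$ or $|x|K\in L^\infty$ with $K$ anti-symmetric.
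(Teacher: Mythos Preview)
Your proposal contains a genuine gap at its central step. The exponential-moment bound you need is for
\[
\int_{\Pi^{dN}} \bar\rho_N\,\exp\Big(\frac{\eta}{N}\sum_{i,j=1}^N \phi(x_i,x_j)\Big)\,\ud X^N,
\]
and this is \emph{not} the content of Theorem \ref{MEI}. Theorem \ref{MEI} bounds expressions of the form $\exp\big(\frac{1}{N}\sum_{j_1,j_2}\psi(x_1,x_{j_1})\psi(x_1,x_{j_2})\big)$, i.e.\ the square of a single empirical average with one distinguished index $x_1$ and a product structure $\psi\cdot\psi$; it requires only the cancellation $\int\psi(z,x)\bar\rho(x)\,\ud x=0$. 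The full double sum $\frac{1}{N}\sum_{i,j}\phi(x_i,x_j)$ has no such factorized structure and no distinguished index, and bounding its exponential moment is precisely Theorem \ref{MEII}, which needs \emph{both} cancellations $\int\phi(x,z)\bar\rho(z)\,\ud z=0$ and $\int\phi(x,z)\bar\rho(x)\,\ud x=0$. The paper's proof of Theorem \ref{thmvanishingviscosity} explicitly invokes Theorem \ref{MEII}, so the combinatorial analysis you hoped to sidestep is in fact still required. The second cancellation is the reason the $\udiv K$ term must be included in $\phi$ (which you omitted): integrating $(K(x-z)-K\star\bar\rho(x))\cdot\nabla\log\bar\rho(x)$ against $\bar\rho(x)\,\ud x$ produces exactly $-(\udiv K(x-z)-\udiv K\star\bar\rho(x))$ after integration by parts, so only the combined expression has zero $x$-average.

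There is a secondary issue in the anti-symmetric case. You write $|\phi(x,y)-\phi(y,x)|\leq C|x-y||K(x-y)|$ by ``Lipschitz regularity of $\nabla\log\bar\rho$'', but the theorem does not assume $\nabla^2\log\bar\rho\in L^\infty$; the constant $\bar M_2$ depends only on $\sup_p\|\nabla^2\log\bar\rho\|_{L^p(\bar\rho\,\ud x)}/p$. The paper handles this via the pointwise inequality of Lemma \ref{rademacher}, $|f(x)-f(z)|\leq C_d|x-z|(M|\nabla f|^q(x))^{1/q}$, and then controls the maximal function in $L^p(\bar\rho\,\ud x)$ using that $\log\bar\rho\in BMO$ forces $\bar\rho\in A_p$. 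Your shortcut would impose a strictly stronger hypothesis than the one stated.
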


\begin{remark} The constant $\bar M_2$ is in the above  complex form simply because we include all cases $\sigma_N \to \sigma\geq 0$. For instance if $\sigma_N \equiv \sigma$, then $\bar M_2$ does not depend on  $\frac{1}{N} \int_{\Pi^{d\, N}} \rho_N^0 \log \rho_N^0$, $\| \udiv F\|_{L^\infty}$ and $\|\log \bar \rho\|_{W^{1, \infty}}$. 
See the proof of Theorem 2 in subsection \ref{ProofTh2} for more details. 
 \end{remark}

\begin{remark} To control the error caused by the difference $|\sigma -\sigma_N|$, 
we need $\nabla \log \bar \rho \in L^\infty(\Pi^d)$. This can be replaced by appropriate  moment assumptions like $|\nabla \log \bar \rho (x) | \leq C |x|^k$ so that the result can easily be extended to the whole space $\mathbb{R}^d$. 
\end{remark}

Theorem \ref{thmvanishingviscosity} also applies to the Biot-Savart law  \eqref{biot-savart}, which for $\sigma_N=0$ corresponds to the inviscid point vortex model approximating 2D incompressible Euler equation.
This derivation was an early breakthrough from \cite{GHL} and \cite{Goodman91}, which obtained a very precise and quantitative comparison of the point vortex dynamics with its mean field limit. The results on those articles required however also a precise mesh-like distribution of the point vortices, that is in particular not compatible with random initial conditions. This was a strong motivation for the later works in \cite{Scho96} for example, which allowed for more general initial conditions but less optimal quantitative estimates.

As for the contributions just mentioned, our result strongly relies on the anti-symmetry of the kernel $K$. It provides the optimal rate of convergence while allowing random initial data (and in fact, doesn't work well if particles are initially strongly correlated). But more importantly, it does not require $\sigma_N=0$ so that it is compatible with all sort of vanishing viscosity approximations to the Euler system.

\medskip

As we remarked above, if $|x|\,K\in L^\infty$ then $K\in W^{-1,\infty}$ while on the other hand if $K\in W^{-1,\infty}$ then it can be singular on a more complex set, it can be measure-valued functions or even more general than measures as we discussed earlier in subsection \ref{ApplicationSubsection}.  For this reason Theorem  \ref{thmvanishingviscosity} is obviously mostly only useful in comparison to our main result if $\sigma_N\rightarrow \sigma=0$, including potentially the purely deterministic setting where $\sigma_N=0$ or cases where the viscosity is degenerate in some directions. But it may also require less regularity on the limit $\bar\rho$ and could also be of use in such a situation. In particular it does not require that $\inf \bar\rho>0$ and is hence easy to extend to unbounded domains contrary to Theorem  \ref{maintheorem}.

Because of its usefulness for degenerate viscosities, it is rather natural to compare Theorem  \ref{thmvanishingviscosity} to results for kinetic mean field limits based on the 2nd order dynamics
\begin{equation}
\qquad \ud Q_i=P_i\, \ud t,\quad \ud P_i=\frac{1}{N}\sum_{j=1}^N K(Q_i-Q_j) \ud t +\sqrt{ 2\sigma_N} \ud W_t^i.\label{2ndordersde} 
\end{equation}
We refer to \cite{Golse,Jabin1} for an introduction to the mean field question in this kinetic setting. The best results so far have been obtained in \cite{HJ} for a singular kernel $K$ with $|K(x)|\leq C\,|x|^{ -\alpha}$, $|\nabla K(x)|\leq C\,|x|^{-1-\alpha}$ with $\alpha<1$; in \cite{Ho} for H\"older continuous $K$. The most classical case is again the Poisson kernel $K(x)= \gamma_d \,  x/|x|^d$ which is unfortunately out of reach so far (except in dimension $1$ as in \cite{HS}). It is possible to treat truncated kernels such as $K(x)= \gamma_d \, x/(|x|+\eps_N)^d$ with the most realistic $\eps_N$ obtained in \cite{Laz,LazPic}. However none of the techniques in those articles seems, so far, to be able to handle any diffusion and especially vanishing or degenerate diffusion as in \eqref{2ndordersde}. In the case of \eqref{2ndordersde} where the limiting equation is often called Vlasov-Fokker-Planck, we refer for example to \cite{BGM} which requires more regularity on $K$.

We remark that in comparison, the theory of mean field limits for purely {\em 1st order systems without viscosity} is much more advanced.
In particular the limit of point vortices had already obtained in \cite{GHL}, with a very precise comparison at the level of characteristics but very specific initial conditions as well. The requirements on the initial data was later relaxed in \cite{Scho96} to allow for random initial distributions at the cost of a less accurate comparison. In \cite{LiuXin}, it was even possible to obtain 2D vortex sheet at the limit. 
Those results rely on the particular structure of the Biot-Savart law, and especially on the cancellation at the heart of Delort's argument.

Nevertheless, it was proved in \cite{Hauray} using appropriate Wasserstein distances, that the mean field limit holds for any interaction kernel with $|K(x)|\lesssim |x|^{-s}$ and $|\nabla K|\lesssim |x|^{-s-1}$ with $s<d-1$, without any other structure on $K$ and in any  dimension but not including the Coulomb case.

More recently, a relative entropy approach based on the natural energy of the system has been introduced in \cite{Duer}. This allows for a direct control on the difference between the empirical measure and the limit. The method performs especially well on gradient flows (where our present techniques are sub-optimal) and allows to obtain the mean field limit for general Riesz potentials (including Coulomb in 2D). This approach can also be used when the discrete dynamics is not immediately under the form of an aggregation equation, with Ginzburg-Landau vortices in \cite{DuerSer}. The technique also allowed to include Coulomb interaction in any dimension in \cite{SerfatyCou}.

However it remains quite challenging to employ the techniques developed in those deterministic settings  with any (possibly vanishing) diffusion.

Specifically for stochastic systems with diffusion, a proper use of the gradient flow structure (in comparison to the Hamiltonian structure of the Biot-Savart law) was instrumental in  \cite{CepaL} and more recently in \cite{BO16}.  This allowed to obtain propagation of chaos in \cite{CepaL}, in dimension $d=1$ and for a logarithmic interacting potential, or converting in our notation $K(x) = 1/x$ in 1D. This result could be extended in \cite{BO16} to $K(x) = C \frac{x}{|x|^{s}}$ with $s \in [1, 3)$ but still in 1D  by introducing the right notion of quasi-convexity.

Another obvious point of comparison for Theorem  \ref{thmvanishingviscosity} is our previous result in \cite{JW1}. This previous result covered the case of \eqref{2ndordersde} with the same assumption $K\in L^\infty$; it also introduced the basic ideas for the method used here, based on  the relative entropy  and combinatorics estimates.

However \cite{JW1} was relying strongly on the symplectic structure of the dynamics in \eqref{2ndordersde}. Extending the method to general kernels $K$ which may not even be Hamiltonian, as is done by Theorem  \ref{thmvanishingviscosity}, changes the scope of the result. It has also been proved to be quite complex: From a technical point of view, the whole combinatorics estimates of \cite{JW1} can be summarized in section \ref{PreCombi} of the present article while the new estimates are considerably longer, see section \ref{ProofMEII}.

\subsection{Sketch of the proof of Proposition \ref{entropybounds}}
The proof follows very classical ideas: Consider a regularized interaction kernel $K_\eps$. Eq. \eqref{Liouville} with $K_\eps$ now has a unique solution $\rho_{N,\eps}$ for any initial measure $\rho_N^0$. The goal is to take the limit $\eps\rightarrow 0$, by extracting weak-* converging sub-sequences of $\rho_{N,\eps}$, and to derive \eqref{Liouville} for the limiting kernel $K$ and the various estimates such as \eqref{entropydissip} and \eqref{estentropybounds}.

The only (small) difficulty in this procedure is to obtain adequate uniform bounds. For this reason we only explain here how to derive those bounds for any weak solution $\rho_N$ to \eqref{Liouville} which also satisfies \eqref{entropydissip}.

The first step is to prove from \eqref{entropydissip} that
\[
\sum_{i=1}^N \int_0^t\int_{\Pi^{d\,N}}\frac{|\nabla_{x_i}\rho_N|^2}{\rho_N}\,\ud X^N\,\ud s\leq C\,\int_{\Pi^{d\,N}} \rho_N^0\,\log \rho_N^0\,\ud X^N.
\]
Observe that if $\udiv K\in \dot W^{-1,\infty}$, that is $\udiv K=\udiv \psi$ with $\|\psi\|_{L^\infty}=\|\udiv K\|_{\dot W^{-1,\infty}}$,  then
\[\begin{split}
& - \frac{1}{N}\sum_{i,\;j=1}^N\int_0^t\int_{\Pi^{d\,N}}\udiv K(x_i-x_j)\,\rho_N\,\ud X^N\,\ud s\\
&\qquad\leq \|\udiv K\|_{\dot W^{-1,\infty}}\,\sum_{i=1}^N \int_0^t\int_{\Pi^{d\,N}} |\nabla_{x_i}\rho_N|\,\ud X^N\,\ud s
\end{split}
\]
On the other hand 
\[\begin{split}
&\sum_{i=1}^N \int_0^t\int_{\Pi^{d\,N}} |\nabla_{x_i} \rho_N|\,\ud X^N\,\ud s\\
&\quad\leq  \frac{\sigma_N}{2\,\|\udiv K \|_{\dot{W}^{-1, \infty}}}\sum_{i=1}^N \int_0^t\int_{\Pi^{d\,N}}\frac{|\nabla_{x_i}\rho_N|^2}{\rho_N}\,\ud X^N\,\ud s\\
&\qquad +\frac{\|\udiv\, K\|_{\dot W^{-1,\infty}}}{2\,\sigma_N} \sum_{i=1}^N \,\int_0^t\int_{\Pi^{d\,N}} \rho_N\,\ud X^N\,\ud s\\
&\quad\leq \frac{N\,t\,\|\udiv\, K\|_{\dot W^{-1,\infty}}}{2\,\sigma_N}+ \frac{\sigma_N}{2 \|\udiv K \|_{\dot{W}^{-1, \infty}}} \sum_{i=1}^N \int_0^t\int_{\Pi^{d\,N}} \frac{|\nabla_{x_i}\rho_N|^2}{\rho_N}\,\ud X^N\,\ud s.
\end{split}\]
This implies that
\[
\begin{split}
& -\frac{1}{N}\sum_{i,\;j=1}^N\int_0^t\int_{\Pi^{d\,N}}\udiv K(x_i-x_j)\,\rho_N\,\ud X^N\,\ud s\\
&\qquad\leq \frac{N\,t\,\|\udiv K\|_{\dot W^{-1,\infty}}^2}{2\,\sigma_N}+ \frac{\sigma_N}{2} \sum_{i=1}^N \int_0^t\int_{\Pi^{d\,N}}\frac{|\nabla_{x_i}\rho_N|^2}{\rho_N}\,\ud X^N\,\ud s.
\end{split}
\]
Introducing this bound in \eqref{entropydissip} shows that
\[\begin{split}
&\int_{\Pi^{d\,N}}\rho_N(t,X^N)\,\log \rho_N(t,X^N)\,\ud X^N+\frac{\sigma_N}{2}\,\sum_{i=1}^N\int_0^t\int_{\Pi^{d\,N}}\frac{|\nabla_{x_i}\rho_N|^2}{\rho_N}\,\ud X^N\,\ud s\\
&\quad\leq \int_{\Pi^{d\,N}}\rho_N^0\,\log \rho_N^0\,\ud X^N+\frac{N\,t\,\|\udiv K\|_{\dot W^{-1,\infty}}^2}{2\,\sigma_N}+N\,t\,\|\udiv F\|_{L^\infty},
\end{split}
\]
which since $\sigma_N\geq \underline{\sigma}$ exactly proves \eqref{estentropybounds}.

From Lemma 3.7 in \cite{HM}, i.e. the Fisher information of $2-$marginal $\rho_{N, 2}$ can be controlled by the  total Fisher information of $\rho_N$,  we know that
\[
\int_0^t\int_{\Pi^{2\,d}} \frac{|\nabla_{x_1}\rho_{N,2}|^2
}{\rho_{N,2}}\,dx_1\,dx_2\leq  \frac{1}{N} \sum_{i=1}^N \int_0^t\int_{\Pi^{d\,N}}\frac{|\nabla_{x_i}\rho_N|^2}{\rho_N}\,\ud X^N\,\ud s,
\]
which can be proved  by applying Jensen's inequality to the convex function $(a, b) \mapsto |a|^2/b$. 

If $K \in \dot W^{-1,\infty}$, {\em i.e.} if $K(x)=\udiv V(x)$ meaning with the use of coordinates that $K_\alpha(x)=\sum_{\beta=1}^d \partial_{\beta} V_{\alpha \beta}(x)$ with $V$ a matrix-valued field, then for any $\phi\in W^{1,\infty}$
\[\begin{split}
&\int_{\Pi^{2\,d}} K (x_1-x_2)\,\phi(x_1,x_2)\,\rho_{N,2}\,dx_1\,dx_2\\
&\quad= -\int_{\Pi^{2\,d}} V(x_1-x_2)\,(\phi\,\nabla_{x_1}\rho_{N,2}+\nabla_{x_1}\phi\ \rho_{N,2})\,dx_1\,dx_2\\
&\quad\leq \|V\|_{L^\infty}\,\|\nabla\phi\|_{L^\infty}+\|V\|_{L^\infty}\,\|\phi\|_{L^\infty}\, \left(\int_{\Pi^{2\,d}} \frac{|\nabla_{x_1}\rho_{N,2}|^2}{\rho_{N,2}}\,dx_1\,dx_2\right)^{1/2},
\end{split}
\]
which leads to  \eqref{boundKrhoN} using that $\inf_V \|V\|_{L^\infty}=\|K\|_{\dot W^{-1,\infty}}$.

Finally, we note that
\[
\udiv \frac{x}{|x|^\gamma}=\frac{d}{|x|^\gamma}-\gamma\,\sum_\alpha \frac{x_\alpha\,x_\alpha}{|x|^{\gamma+2}}=\frac{d-\gamma}{|x|^{\gamma}},
\]
so that with the same approach it would be possible to derive the bound
\[\begin{split}
&\int_{\Pi^{2\,d}} \frac{\rho_{N,2}}{|x_1-x_2|^\gamma}\,dx_1\,dx_2\leq \frac{1}{(d-\gamma)^2}\,\int_{\Pi^{2\,d}} \frac{|\nabla_{x_1}\rho_{N,2}|^2}{\rho_{N,2}}\,dx_1\,dx_2,
\end{split}
\]
for any $\gamma<2$ if $d=2$ and for $\gamma=2$ if $d>2$, 
which has proved critical in the previous derivation and studies of the 2d incompressible Navier-Stokes for instance see \cite{FGP,FHM} and \cite{Osada}.
\section{Proofs of Theorems \ref{maintheorem} and \ref{thmvanishingviscosity} \label{proofmain}}
\subsection{Sketch of the proof of Theorem \ref{maintheorem}\label{sketch}}
Our goal in this subsection is to present the main  steps of the proof. For this reason, we make several simplifying assumptions that allow us to focus on the main ideas.
First of all, we assume that 
\[
F=0,\quad \udiv K=0,\quad K_\alpha=\sum_\beta \partial_\beta V_{\alpha\beta}\quad\mbox{with}\ \|V\|_{L^\infty(\Pi^d)}\leq \delta,
\]
for $\delta$ small in terms of some norms of $\bar\rho$.

We also assume that $\bar\rho\in C^\infty$ with $\inf \bar\rho>0$ and that $\rho_N$ is a classical solution to \eqref{Liouville} so that we may easily manipulate this equation.

Finally we assume that $\sigma_N=\sigma=1$.

Following our previous discussion about the criticality of the assumption $K=\udiv V$ with $V\in L^\infty$, we refer the readers in particular to the end of step $2$ after formula \eqref{defABsimplified} and to step $5$ in the following proof. That step requires the use of Theorem \ref{MEII} whose proof contains the main technical difficulties of the article.

If instead one would assume that $V$ is anti-symmetric then the term $\tilde B$ in step $5$ vanishes and as we mentioned above, we would have a much simpler proof. Unfortunately this would not let us handle our most important kernel $K=x^\perp/|x|^2$ corresponding to the 2d incompressible Navier-Stokes system.

\medskip

\noindent {\em Step 1: Time evolution of the relative entropy.} First of all it is straightforward to derive an equation on $\bar\rho_N$ from the limiting equation \eqref{limitmeanfield}
\[
\begin{split}
&\partial_t \bar\rho_N+\sum_{i=1}^N \frac{1}{N}\,\sum_{j=1}^N K(x_i-x_j)\cdot\nabla_{x_i} \bar\rho_N =\sum_{i=1}^N \sigma\,{\Delta_{x_i} \bar\rho_N}\\
&\quad +\sum_{i=1}^N  \bigg(\frac{1}{N}\,\sum_{j=1}^N K(x_i-x_j)-K\star_{x}\bar\rho(x_i)\bigg)\cdot\nabla_{x_i} \bar\rho_N.\\
\end{split}
\]
Combining this with the Liouville equation \eqref{Liouville}, one obtains that
\begin{equation}
\begin{split}
&\frac{\ud }{\ud t}{\cal H}_N(\rho_N\,|\;\bar\rho_N)(t)\\  
& \leq -\frac{1}{N^2}\,\sum_{i,\;j=1}^N \int_{\Pi^{d\,N}}\rho_N\, \left( K(x_i-x_j)-K\star_{x}\bar\rho(x_i)\right)\cdot\nabla_{x_i} \log\bar\rho_N\,\ud X^N\\
&\quad -\frac{1}{N}\,\sum_{i=1}^N\int_{\Pi^{d\,N}}\rho_N\,\left|\nabla_{x_i} \log \frac{ \rho_N}{ \bar \rho_N}\right|^2.
\end{split}\label{simplifiedentropy}
\end{equation}
A full justification of this calculation is given later in the main proof in Lemma \ref{entropytimeevolution}.

\medskip

\noindent {\em Step 2: Using $K=\udiv V$.} As the kernel $K$ is not bounded but we only have that $K=\udiv V$ with $V\in L^\infty$, the next step is to integrate by parts to make $V$ explicit in our estimates. Writing $K_\alpha=\sum_\beta \partial_\beta V_{\alpha\beta}$, we find
\[
\begin{split}
& -\frac{1}{N^2}\,\sum_{i,\;j=1}^N \int_{\Pi^{d\,N}}\rho_N\, \left( K(x_i-x_j)-K\star_{x}\bar\rho(x_i)\right)\cdot\nabla_{x_i} \log\bar\rho_N\,\ud X^N \\
  &= -\frac{1}{N^2}\,\sum_{\alpha,\beta}\sum_{i,\;j=1}^N \int_{\Pi^{d\,N}}\!\! \left( \partial_{x_i^\beta} V_{\alpha\beta}(x_i\!-\!x_j)-\partial_{x_i^\beta} V_{\alpha\beta}\star_{x}\bar\rho(x_i)\right)\, \frac{\rho_N}{\bar\rho_N} \,\partial_{x_i^\alpha}\bar\rho_N\,\ud X^N,
\end{split}
\]
so that integrating by part, this term is equal to 
\[
\begin{split}
  & \ \frac{1}{N^2}\,\sum_{\alpha,\beta}\sum_{i,\;j=1}^N \int_{\Pi^{d\,N}}\rho_N\, \left( V_{\alpha\beta}(x_i-x_j)-V_{\alpha\beta} \star_{x}\bar\rho(x_i)\right)\, \frac{\partial^2_{x_i^\alpha\,x_i^\beta} \bar\rho_N}{\bar\rho_N}\,\ud X^N\\
&+\frac{1}{N^2}\,\sum_{\alpha\beta}\sum_{i,\;j=1}^N \int_{\Pi^{d\,N}}  \left( V_{\alpha\beta}(x_i-x_j)-V_{\alpha\beta}\star_{x}\bar\rho(x_i)\right)\, \partial_{x_i^\alpha} \bar\rho_N  \partial_{x_i^\beta} \frac{\rho_N}{\bar\rho_N} \,\ud X^N.
\end{split}
\]
Writing in tensor form this is finally equal to
\[
\begin{split}
  &\  \frac{1}{N^2}\,\sum_{i,\;j=1}^N \int_{\Pi^{d\,N}}\rho_N\, \left( V(x_i-x_j)-V \star_{x}\bar\rho(x_i)\right)\,:\, \frac{\nabla^2_{x_i} \bar\rho_N}{\bar\rho_N}\,\ud X^N\\
&+\frac{1}{N^2}\,\sum_{i,\;j=1}^N \int_{\Pi^{d\,N}}  \left( V(x_i-x_j)-V\star_{x}\bar\rho(x_i)\right)\,:\, \nabla_{x_i} \bar\rho_N \otimes \nabla_{x_i} \frac{\rho_N}{\bar\rho_N} \,\ud X^N.
\end{split}
\]
The second term involves a derivative of $\rho_N/\bar\rho_N$ which can be controlled thanks to the dissipation term in 
\eqref{simplifiedentropy}. More precisely by Cauchy-Schwartz
\[
\begin{split}
&
\frac{1}{N^2}\,\sum_{i,\;j=1}^N \int_{\Pi^{d\,N}} \left( V(x_i-x_j)-V\star_{x}\bar\rho(x_i)\right) : \nabla_{x_i} \bar\rho_N \otimes \nabla_{x_i} \frac{\rho_N}{\bar\rho_N}  \ud X^N\\
&\leq \frac{1}{N}\sum_{i=1}^N \int_{\Pi^{d\,N}} \bigg|\nabla_{x_i} \frac{\rho_N}{\bar\rho_N}\bigg|^2\,\frac{\bar\rho_N^2}{\rho_N}\,\ud X^N\\
&\qquad+  \frac{1}{N}\sum_{i=1}^N \int_{\Pi^{d\,N}} \rho_N\,\frac{|\nabla_{x_i} \bar\rho_N|^2}{\bar\rho_N^2 }\,\bigg|\frac{1}{N}\sum_{j=1}^N \left( V(x_i-x_j)-V\star_{x}\bar\rho(x_i)\bigg)\right|^2\,\ud X^N.
\end{split}
\]
Of course
\[
\left|\nabla_{x_i} \frac{\rho_N}{\bar\rho_N}\right|^2\, \frac{\bar\rho_N^2}{\rho_N} = \left|\nabla_{x_i}\log \frac{\rho_N}{\bar\rho_N}\right|^2\,\rho_N
\]
so that the first term is actually bounded by the dissipation of entropy. On the other hand 
\[
\frac{|\nabla_{x_i} \bar\rho_N|^2}{\bar\rho_N^2}=\frac{|\nabla_{x_i} \bar\rho(x_i)|^2}{\bar\rho(x_i)^2}.
\]
Hence we obtain that
\begin{equation}
\frac{\ud}{\ud t}{\cal H}_N(\rho_N\,|\;\bar\rho_N)(t)\leq A+B,\label{simplifiedentropy2}
\end{equation}
where
\begin{equation}
\begin{split}  
&A=\frac{C_{\bar\rho}}{N}\sum_{i=1}^N \int_{\Pi^{d\,N}} \rho_N\, \bigg|\frac{1}{N}\sum_{j} \left( V(x_i-x_j)-V\star_{x}\bar\rho(x_i)\right)\bigg|^2\,\ud X^N,\\
&B= \frac{1}{N^2}\,\sum_{i,\;j=1}^N \int_{\Pi^{d\,N}}\rho_N\, \left( V(x_i-x_j)-V\star_{x}\bar\rho(x_i)\right)\,:\,\frac{\nabla^2_{x_i} \bar\rho(x_i)}{\bar\rho(x_i)}\,\ud X^N,\\
\end{split}\label{defABsimplified}
\end{equation}
and $C_{\bar\rho}$ is a constant depending only on the smoothness of $\bar\rho$. 

We point out here that $\nabla^2_{x_i} \bar\rho$ is a symmetric matrix. Hence, {\em if $V$ is anti-symmetric, then the term $B$ completely vanishes: $B=0$}.

\medskip

\noindent {\em Step 3: Change of law from $\rho_N$ to $\bar\rho_N$.} The two previous terms $A$ and $B$ can be seen as the expectations of the corresponding random variables with respect to the law $\rho_N$. Obviously we do not know the properties of $\rho_N$ and would much prefer having expectations with respect to the tensorized law $\bar\rho_N$. We hence use the following
\begin{lemma}
For any two probability densities $\rho_N$ and $\bar\rho_N$ on $\Pi^{d\,N}$, and any $\Phi\in L^\infty(\Pi^{d\,N})$, one has that $\forall \eta>0$
\[
\int_{\Pi^{d\,N}} \Phi\,\rho_N\,\ud X^N\leq \frac{1}{\eta} \Big( {\cal H}_N (\rho_N\,|\bar\rho_N)+ \frac{1}{N}\,\log \int_{\Pi^{d\,  N}} \bar\rho_N\,e^{  N \eta \Phi}\,\ud X^N \Big). 
\]\label{youngconvex}
\end{lemma}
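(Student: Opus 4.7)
The plan is to recognize this inequality as the classical Gibbs (Donsker--Varadhan) variational bound, so the proof reduces to a single application of the non-negativity of relative entropy. First I would introduce the tilted probability density
\[
\nu_N := \frac{e^{N\eta\,\Phi}\,\bar\rho_N}{Z_N}, \qquad Z_N := \int_{\Pi^{d\,N}} e^{N\eta\,\Phi}\,\bar\rho_N\,\ud X^N,
\]
which is well defined because $\Phi\in L^\infty$ forces $Z_N$ to be finite and strictly positive, so $\nu_N$ is a bona fide probability density on $\Pi^{d\,N}$.

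The key computation is then to expand the trivial inequality $\int \rho_N \log(\rho_N/\nu_N)\,\ud X^N \geq 0$, which is just Jensen applied to the convex function $u\mapsto u\log u$. Substituting the explicit formula for $\nu_N$ produces
\[
0 \leq \int_{\Pi^{d\,N}}\rho_N \log\frac{\rho_N}{\bar\rho_N}\,\ud X^N - N\eta \int_{\Pi^{d\,N}} \Phi\,\rho_N\,\ud X^N + \log Z_N,
\]
that is, $N\eta\int\Phi\,\rho_N\,\ud X^N \leq N\,{\cal H}_N(\rho_N\,|\,\bar\rho_N) + \log Z_N$; dividing through by $N\eta$ delivers exactly the stated bound. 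Equivalently one could invoke the pointwise Young inequality $uv \leq u\log u - u + e^{v}$ with $u = \rho_N/\bar\rho_N$ and $v = N\eta\Phi - \log Z_N$, and integrate against $\bar\rho_N\,\ud X^N$; the two routes are the same calculation organized differently.

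I do not expect any real obstacle. The only point that could in principle cause trouble is the finiteness of $Z_N$, which is automatic here from $\Phi\in L^\infty$; and the exchange of $\log$ with the integral in the expansion of $\log\nu_N$ is immediate since $\nu_N>0$ everywhere. It is worth flagging that the $L^\infty$ hypothesis is cosmetic and can be replaced by the hypothesis $Z_N<\infty$ alone (mere exponential integrability of $\Phi$ against $\bar\rho_N$); this weaker form is in fact the one that matters later in the article, where Lemma \ref{youngconvex} will be coupled with the partition-function bound of Theorem \ref{MEII} to handle the unbounded potential $\Phi$ arising from the interaction kernel.
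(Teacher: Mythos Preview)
Your proof is correct and is essentially identical to the paper's: the paper also introduces the tilted density $f=\lambda^{-1}e^{N\Phi}\bar\rho_N$ (after reducing to $\eta=1$) and uses the non-negativity of relative entropy in the equivalent form $\int\rho_N\log f\le\int\rho_N\log\rho_N$, then expands $\log f$ exactly as you do. Your additional remark that $\Phi\in L^\infty$ can be relaxed to $Z_N<\infty$ is accurate and is indeed how the lemma is effectively used later.
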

\begin{proof}
We give the (short) proof for the sake of completeness. Without loss of generality, we assume that $\eta =1$.  Define
\[
f=\frac{1}{\lambda}\,e^{N\,\Phi}\,\bar\rho_N,\quad \lambda=\int_{\Pi^{d\, N}} \bar\rho_N\,e^{N\,\Phi}\,\ud X^N.
\]
Notice that $f$ is a probability density as $f\geq 0$ and $\int f=1$. Hence by the convexity of the entropy
\[
\frac{1}{N}\int_{\Pi^{d \,N}} \rho_N\,\log f\,\ud X^N\leq \frac{1}{N}\int_{\Pi^{d\,N}} \rho_N\,\log \rho_N\,\ud X^N.
\]
On the other hand, one can easily check that
\[\begin{split}
\frac{1}{N}\!\!\int_{\Pi^{d\,N}}\!\! \rho_N\,\log f\,\ud X^N&=\int_{\Pi^{d\,N}}\!\! \rho_N\,\Phi\,\ud X^N+\frac{1}{N}\int_{\Pi^{d\,N}}\!\! \rho_N\,\log \bar\rho_N\,\ud X^N-\frac{\log\lambda}{N},
\end{split}\]
which concludes the proof of the lemma.
\end{proof}

To apply Lemma \ref{youngconvex} to $A$, we first expand $A$ coordinate by coordinate as 
\[
A  \leq  \frac{C_{\bar\rho}}{N}\sum_{i=1}^N \sum_{\alpha, \beta=1}^d \int_{\Pi^{d\,N}} \rho_N\, \bigg(  \frac{1}{N}\sum_{j=1}^N \left( V_{\alpha, \beta}(x_i-x_j)-V_{\alpha, \beta}\star_{x}\bar\rho(x_i)\right)\bigg)^2\,\ud X^N.
\]

Now applying Lemma \ref{youngconvex} with  first to each 
\[
\Phi_{\alpha, \beta}=\bigg(\frac{1}{N}\sum_{j=1}^N \left( V_{\alpha, \beta}(x_i-x_j)-V_{\alpha, \beta}\star_{x}\bar\rho(x_i)\right)\bigg)^2,
\]
in $A$ and then to
\[
\Phi=\frac{1}{N^2}\sum_{i,j=1}^N \left( V(x_i-x_j)-V\star_{x}\bar\rho(x_i)\right)\,:\,\frac{\nabla^2_{x_i} \bar\rho(x_i)}{\bar\rho(x_i)},
\]
in $B$, we obtain that
\[
A+B\leq 2\,{\cal H}_N(\rho_N\,|\;\bar\rho_N)(t)+\tilde A+\tilde B,
\]
with
\begin{equation}
\begin{split}  
  &\tilde A=\frac{C_{\bar\rho}}{N^2}\sum_{i=1}^N \sum_{\alpha, \beta=1}^d \log \int_{\Pi^{d\,N}} \exp\bigg(\frac{1}{\sqrt{N}}\sum_{j=1}^N \left( V_{\alpha, \beta}(x_i-x_j)-V_{\alpha, \beta}\star_{x}\bar\rho(x_i)\right)\bigg)^2\\
  &\hskip10cm \bar\rho_N\,\ud X^N,\\
&\tilde B=\frac{1}{N}\log \int_{\Pi^{d\,N}}\bar \rho_N\,e^{ \frac{1}{N}\,\sum_{i,\;j=1}^N\left( V(x_i-x_j)-V\star_{x}\bar\rho(x_i)\right)\,:\,\frac{\nabla^2_{x_i} \bar\rho(x_i)}{\bar\rho(x_i)}}\,\ud X^N.\\
\end{split}\label{defABsimplified2}
\end{equation}
Observe that the cost to perform this change of law is, unfortunately, severe as we now have exponential factors in $\tilde A$ and $\tilde B$. That is the reason why we need $L^\infty$ (or almost $L^\infty$) bounds on $V$.

\medskip

\noindent{\em Step 4:  Bounding $\tilde A$ through a law of large number at the exponential scale.} By symmetry of permutation, we may take $i=1$ in $\tilde A$. Define 
\[
\psi_{\alpha,\beta}(z,x)=V_{\alpha,\beta}(z-x)-V_{\alpha,\beta}\star_{x}\bar\rho(z),
\] 
so that
\[\begin{split}
&\bigg(\frac{1}{\sqrt{N}}\sum_{j=1}^N \left( V_{\alpha, \beta}(x_1-x_j)-V_{\alpha, \beta}\star_{x}\bar\rho(x_1)\right)\bigg)^2\\
&\qquad=\frac{1}{N}  \sum_{j_1, j_2 =1}^N \psi_{\alpha\beta}(x_1,x_{j_1})\psi_{\alpha\beta}(x_1,x_{j_2}).
\end{split}
\]
We remark that each $\psi$ has vanishing expectation with respect to $\bar\rho$
\[
\int_{\Pi^d} \psi_{\alpha\beta}(z, x)\,\bar\rho(x)\,dx=0.
\]

\begin{theorem} \label{MEI} Consider any $\bar\rho \in L^1(\Pi^d)$ with $\bar\rho \geq 0$ and $\int_{\Pi^d} \bar\rho (x) \ud x=1 $. Assume that a scalar function $\psi\in L^\infty$ with  $ \|\psi\|_{L^\infty}  < \frac{1}{2 e}$, and that for any fixed $z$, $\int_{\Pi^d}\,\psi(z,x)\,\bar\rho(x)\,dx=0$ then 
 \begin{equation}
 \label{ME1}
\begin{split}
& \int_{\Pi^{d\,N}} \bar{\rho}_N \exp\bigg(\frac{1}{N}\sum_{j_1,j_2 =1}^N \psi(x_1,x_{j_1})\psi(x_1,x_{j_2})\bigg) \ud X^N \\
&\qquad \leq  C= 2 \left( 1 + \frac{ 10 \alpha}{(1 - \alpha)^3} + \frac{\beta}{1- \beta}  \right), 
\end{split}
 \end{equation}
 where $\bar{\rho}_N(t, X^N)= \Pi_{i=1}^N \bar\rho(t, x_i)$ 
 \begin{equation*}
 \alpha= \left(e\, \|\psi\|_{L^\infty} \right)^4 <1, \quad \beta= \left(\sqrt{2e}\, \|\psi\|_{L^\infty}  \right)^4 <1. 
 \end{equation*} 
\end{theorem}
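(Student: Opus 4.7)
My plan is to prove Theorem~\ref{MEI} by Taylor-expanding the exponential and analyzing the resulting moments via a combinatorial cancellation argument that exploits the hypothesis $\int \psi(z, y)\,\bar\rho(y)\,\ud y = 0$.

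The first observation is that the quadratic exponent is a perfect square: setting
\[
T_N(x_1) := \frac{1}{\sqrt{N}} \sum_{j=1}^N \psi(x_1, x_j),
\]
one has $\frac{1}{N}\sum_{j_1, j_2} \psi(x_1, x_{j_1})\psi(x_1, x_{j_2}) = T_N(x_1)^2$, so the left-hand side of \eqref{ME1} equals $\sum_{k \geq 0} \frac{1}{k!} \int \bar\rho_N\, T_N^{2k}\, \ud X^N$ and the task reduces to controlling the moments $\mathbb{E}_{\bar\rho_N}[T_N^{2k}]$ and summing the series. Expanding $T_N^{2k}$ as a sum over tuples $(j_1, \ldots, j_{2k}) \in \{1, \ldots, N\}^{2k}$ and integrating termwise against $\bar\rho_N = \prod_i \bar\rho(x_i)$, each tuple factorizes over the distinct indices it uses: writing $m_j$ for the multiplicity of index $j$, each index $j \geq 2$ contributes $\int \psi(x_1, y)^{m_j} \bar\rho(y)\, \ud y$, which vanishes when $m_j = 1$ by hypothesis, while the diagonal index $j = 1$ contributes $\psi(x_1, x_1)^{m_1}$. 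Hence only tuples in which every non-diagonal index has multiplicity either $0$ or $\geq 2$ survive, and each such contribution is bounded crudely by $\|\psi\|_{L^\infty}^{2k} = \psi_0^{2k}$.

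The number $\mathcal{S}_N(n)$ of surviving tuples of length $n$ admits a clean exponential generating function:
\[
\sum_{n \geq 0} \mathcal{S}_N(n) \frac{z^n}{n!} = e^z\, (e^z - z)^{N-1},
\]
where the factor $e^z$ accounts for the unconstrained multiplicity of the diagonal index $1$ and each factor $e^z - z = \sum_{m \in \{0\} \cup \{m \geq 2\}} z^m/m!$ tracks one of the $N-1$ non-diagonal indices. This yields
\[
\int \bar\rho_N \exp(T_N^2)\, \ud X^N \leq \sum_{k \geq 0} \frac{\psi_0^{2k}(2k)!}{k!\, N^k}\, [z^{2k}]\, e^z (e^z - z)^{N-1}.
\]
Since all Taylor coefficients of $e^z (e^z - z)^{N-1}$ are non-negative, Cauchy's estimate on the positive real axis yields $[z^{2k}] e^z (e^z - z)^{N-1} \leq r^{-2k} e^r (e^r - r)^{N-1}$ for every $r > 0$. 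Choosing $r \sim \sqrt{k/N}$ and using $\log(e^r - r) \leq r^2/2 + O(r^3)$ for small $r$ gives $(e^r - r)^{N-1} \leq e^{k(1 + o(1))}$, and combining with the elementary $(2k)!/k! \leq (2k)^k$ turns the sum into a series that is geometric in $\psi_0^2$ with explicit ratio, uniformly in $N$.

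The main obstacle is the refinement of this analysis needed to produce the explicit constants $\alpha = (e\psi_0)^4$ and $\beta = (\sqrt{2e}\psi_0)^4$ in the closed-form bound $C$ of the statement, as opposed to merely qualitative convergence. This demands isolating the contribution of the diagonal index $j = 1$ (where $\psi(x_1, x_1)$ enjoys no cancellation and must be estimated pointwise) from the off-diagonal contributions, splitting the small-$k$ from the large-$k$ regimes in the coefficient extraction, and carefully tracking the binomial and multinomial prefactors that arise when one decomposes $(e^z - z)^{N-1}$ by the number $p$ of indices actually used. The restrictive threshold $\|\psi\|_{L^\infty} < 1/(2e)$ reflects exactly the price of this closed-form bookkeeping: the naive Gaussian-like heuristic $(e^z - z)^{N-1} \approx e^{(N-1)z^2/2}$ would already give summability for a somewhat larger value of $\psi_0$, but the explicit constants $\alpha, \beta$ require the smaller threshold.
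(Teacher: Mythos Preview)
Your strategy is sound and in fact cleaner than the paper's in two respects. First, you exploit that the exponent is the perfect square $T_N^2\ge 0$, so you can expand $\exp(T_N^2)=\sum_{k\ge 0} T_N^{2k}/k!$ directly; the paper instead uses the cruder $\exp(A)\le 2\cosh(A)$ (needed only because the same framework must later handle the sign-indefinite case of Theorem~\ref{MEII}), which leads to products of length $4k$ rather than $2k$. Second, your exponential generating function $e^{z}(e^{z}-z)^{N-1}$ for the number of surviving tuples is an elegant packaging of the cancellation; the paper instead counts these tuples by hand via Lemma~\ref{CombiI}, bounding $|\mathcal{E}_{q,p}|\le \tfrac{p}{2}e^{p/2}q^{p/2}(p/2)^{p/2}$ and treating the diagonal index $j=1$ as a separate correction of size at most $4k\,|\mathcal{E}_{N-1,4k-1}|$. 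Both routes rest on the same cancellation mechanism you correctly identify.

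That said, your writeup stops precisely where the work begins: you yourself call the derivation of the explicit constants ``the main obstacle'' and do not carry it out. The Cauchy bound with $r\sim\sqrt{k/N}$ and the expansion $\log(e^{r}-r)=r^{2}/2+O(r^{3})$ would indeed give a geometric series for $k$ small compared to $N$, but as written you have only a heuristic with unspecified implied constants, and no treatment of the tail $k\gtrsim N$ (where $r$ is no longer small and the quadratic approximation fails). The paper handles this by an explicit case split: for $4\le 4k\le N$ it combines the combinatorial bound above with Stirling to get the term $5e^{4k}k^{3/2}\|\psi\|_{L^\infty}^{4k}$, summing to $10\alpha/(1-\alpha)^{3}$; for $4k>N$ it uses the trivial count $N^{4k}$ together with $N<4k$ and Stirling to get $(2e\|\psi\|_{L^\infty}^{2})^{2k}$, summing to $\beta/(1-\beta)$. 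Your approach can reproduce an equivalent (likely sharper) bound, but you need to actually execute the coefficient extraction in both regimes, not merely assert it is possible.
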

We give a straightforward proof of Theorem \ref{MEI} in section \ref{proofMEI}, using the combinatorics techniques developed in the article. But note that this theorem is essentially a variant of the  well known law of large numbers at exponential scales; the main difference being that $\psi(x_1,x_{j_1})\psi(x_1,x_{j_2})$ does not have vanishing expectation if $j_1=j_2$, $j_1=1$ or $j_2=1$. {\em Technically Theorem \ref{MEI} is hence rather simple, contrary to Theorem \ref{MEII} below}. 

Using Theorem \ref{MEI} and by taking $\|V\|_{L^\infty}$ small enough, we deduce that
\begin{equation}
\tilde A\leq \frac{C_{\bar\rho }}{N}.\label{boundtildeA}
\end{equation}

\medskip

\noindent{\em Step 5: Bound on $\tilde B$ through a new modified law of large numbers.} We now define
\[
\phi(x,z)=(V(x-z)-V\star \bar\rho(x))\,:\,\frac{\nabla^2_{x} \bar\rho(x)}{\bar\rho(x)},
\]
and we apply to $\tilde B$ the following result
%
\begin{theorem} \label{MEII} Consider $\bar\rho \in L^1(\Pi^d) $ with $\bar\rho \geq 0$ and $\int_{\Pi^d} \bar\rho \ud x =1$. Consider further any $\phi(x,z)\in L^\infty$ with
\[
\gamma :=  C\,  \left( \sup_{p \geq 1} \frac{\|\sup_z |\phi(.,z)|\|_{L^p(\bar\rho \ud x)}}{p} \right)^2   <1,
\]  
where $C$ is a universal constant. Assume that $\phi$ satisfies the following cancellations
\begin{equation}\label{TwoCanLDP}
\int_{\Pi^d} \phi(x,z)\,\bar\rho(x)\,dx=0\quad\forall z, \qquad \int_{\Pi^d} \phi(x,z)\,\bar\rho(z)\,dz=0\quad\forall x.
\end{equation}
Then 
\begin{equation}
\label{ME2}
\int_{\Pi^{d\,N}} \bar{\rho}_N \exp\bigg(\frac{1}{N}\sum_{i,j=1}^N \phi(x_i,x_j)\bigg) \ud X^N \leq \frac{2}{1-\gamma} < \infty, 
\end{equation}
where we recall that $\bar{\rho}_N(t, X^N) = \Pi_{i=1}^N \bar\rho(t, x_i)$.
\end{theorem}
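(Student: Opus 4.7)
I will Taylor-expand the exponential and reduce everything to a weighted counting problem on directed multigraphs. Writing
\[
\exp\Bigl(\tfrac{1}{N}\sum_{i,j}\phi(x_i,x_j)\Bigr)=\sum_{k=0}^\infty\frac{1}{k!\,N^k}\sum_{(i_1,j_1),\dots,(i_k,j_k)}\prod_{\ell=1}^k\phi(x_{i_\ell},x_{j_\ell}),
\]
each $k$-tuple of pairs is encoded as a directed multigraph $G$ on the vertex set $\{1,\dots,N\}$ with $k$ labelled edges. Integrating term by term against $\bar\rho_N=\bar\rho^{\otimes N}$ and using Fubini, any vertex $v$ that is incident to exactly one edge, only as a tail, contributes the factor $\int\phi(x_v,x_{j_\ell})\bar\rho(x_v)\,dx_v=0$ by the first cancellation in \eqref{TwoCanLDP}; symmetrically if it is a sole head. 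Hence only those multigraphs survive in which every vertex in the support has total degree $d^+(v)+d^-(v)\ge2$.

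\textbf{Bounding surviving terms.} For a surviving configuration, let $\Psi(x):=\sup_z|\phi(x,z)|$ and, for each edge $\ell$, bound $|\phi(x_{i_\ell},x_{j_\ell})|\le \Psi(x_{i_\ell})^{1/2}\Psi(x_{j_\ell})^{1/2}$ so as to distribute the bound symmetrically between head and tail. Taking expectations factorizes across vertices and yields
\[
\Bigl|\int\bar\rho_N\prod_\ell\phi(x_{i_\ell},x_{j_\ell})\,dX^N\Bigr|\le \prod_v\|\Psi\|_{L^{\deg(v)/2}(\bar\rho\,dx)}^{\deg(v)/2},
\]
where $\deg(v)=d^+(v)+d^-(v)$. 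The hypothesis gives $\|\Psi\|_{L^p(\bar\rho)}\le (\gamma/C)^{1/2}\,p$, and together with Stirling $p^p\le e^p\,p!$ this produces a per-vertex factor of order $M^{\deg(v)}\deg(v)!$ with $M^2\sim\gamma$.

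\textbf{Combinatorial counting and summation.} It remains to sum over $(k,m,\text{degree sequence},\text{multigraph})$ where $m\le k$ is the number of distinct vertices used. I choose $m$ vertices (factor $\binom{N}{m}\le N^m/m!$), pick a degree sequence $(\deg(v))_v$ summing to $2k$ with each $\deg(v)\ge2$, then count labelled directed multigraphs realising it: the number of ways to drop $k$ ordered edges with prescribed $(d^+(v),d^-(v))$ is the multinomial $k!^2/\prod_v d^+(v)!d^-(v)!$, and after summing the in/out split at each vertex one obtains $k!^2\cdot 2^{2k}/\prod_v(\deg(v)!)^2$ up to a harmless constant. Combining with the analytic bound, the $\deg(v)!$'s from Stirling cancel those in the denominator, the factor $1/k!$ from the Taylor series eats one copy of $k!$, and the $N^m/N^k$ is absorbed because each vertex costs degree at least two, which converts $m$ vertices into a contribution $(CM^2)^k$. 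The full series then telescopes into $\sum_k(CM^2)^k=\tfrac{1}{1-\gamma}$ up to the universal constant $C$, which is how the constant in \eqref{ME2} arises.

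\textbf{Where the hard work lies.} The scheme above is standard in spirit for cumulant/cluster expansions, but the honest difficulty is the bookkeeping: one must show that the sum over all admissible degree sequences (with each $\deg(v)\ge2$, partitioned into $d^\pm$) together with the sum over multigraphs does not blow up faster than $(CM^2)^k$. In particular the fact that \emph{both} cancellations in \eqref{TwoCanLDP} are used---not just one---is what forbids pendant edges in either orientation and is exactly what keeps $m\le k$ rather than $m\le k+1$; losing this would cost a factor of $N$ per excess vertex and destroy the bound. A symmetric redistribution of each $\phi(x_{i_\ell},x_{j_\ell})$ between its two endpoints (so that each endpoint sees the same per-vertex moment $\|\Psi\|_{L^p}$) is essential to make the Stirling cancellation clean; an asymmetric bound would lose a factor that cannot be absorbed. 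This is where I expect the bulk of Section~\ref{ProofMEII} to be spent.
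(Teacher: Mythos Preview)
Your overall architecture---Taylor expand, encode as directed multigraphs, kill pendant vertices via \eqref{TwoCanLDP}, then count---matches the paper's. The cancellation observation is correct and is exactly Lemma~\ref{identifycancel}: a vertex touched by a single edge (as sole tail or sole head) forces the integral to vanish, so only configurations with total degree $\ge 2$ at every active vertex survive.

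The genuine gap is your analytic bound. You write
\[
|\phi(x_{i_\ell},x_{j_\ell})|\le \Psi(x_{i_\ell})^{1/2}\Psi(x_{j_\ell})^{1/2},\qquad \Psi(x):=\sup_z|\phi(x,z)|,
\]
but this is false: $\Psi(x_{j_\ell})=\sup_w|\phi(x_{j_\ell},w)|$ controls $\phi$ with $x_{j_\ell}$ in the \emph{first} slot, not the second. The only pointwise bound available from the hypothesis is the asymmetric one $|\phi(x,z)|\le\Psi(x)$; the theorem assumes nothing about $\sup_x|\phi(x,\cdot)|$. So the ``symmetric redistribution'' that you flag as essential cannot be performed, and with it collapses the clean per-vertex moment $\|\Psi\|_{L^{\deg(v)/2}}$ and the Stirling cancellation you rely on.

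This is precisely why the paper's combinatorics is as heavy as it is. With only $|\phi(x_{i_\ell},x_{j_\ell})|\le\Psi(x_{i_\ell})$, all analytic weight sits on the tail multi-index $I_{2k}$, giving \eqref{boundforIJ}, while the head multi-index $J_{2k}$ must be handled purely by counting. The paper fixes $I_{2k}$, records $m_I=|\{l:a_l=1\}|$ and $n_I=|\{l:a_l>1\}|$, and shows (Lemma~\ref{cardJmn}) that the set $\mathcal{J}_{m_I,n_I}$ of $J_{2k}$ compatible with the cancellations has size at most $C^k N^{k-m_I/2}k^{k+m_I/2}$; the saving $N^{-m_I/2}$ compensates exactly the over-count of $I_{2k}$'s with many out-degree-one vertices. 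A separate, easy argument (Proposition~\ref{PropkLarge}) handles $4k>N$, where no cancellation is needed. Your route would work if the hypothesis were symmetric in the two variables of $\phi$; as stated, you need the asymmetric bookkeeping.
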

Theorem \ref{MEII} is by far the main technical difficulty in this article. Observe that contrary to classical laws of large numbers, it requires two precise cancellations on $\phi$, separately in $x$ where
\[
\int_{\Pi^d} \phi(x,z)\,\bar\rho(x)\,\ud x=\int_{\Pi^d} (\udiv K(x-z)-\udiv K\star_x \bar\rho(x))\,\bar\rho(x)\,dx=0,
\]
as $\udiv K=0$ and in $z$ where we use the classical cancellation
\[
\int_{\Pi^d} (V(x-z)-V\star_x\bar\rho(x))\,\bar\rho(z)\,dz=0.
\]
Choosing $\delta$ so that $\|V\|_{L^\infty}$ is small enough, Theorem \ref{MEII} again implies that 
\begin{equation}
\tilde B\leq \frac{C_{\bar\rho}}{N}.\label{boundtildeB}
\end{equation}

While Theorem \ref{MEII} looks similar to the modified law of large numbers that was at the heart of our previous result \cite{JW1}, it is considerably more difficult to prove. In \cite{JW1}, we relied a lot on the natural symplectic structure of the problem, which is completely absent here.
{\em The proof Theorem \ref{MEII} is therefore the main technical difficulty and contribution of the article, performed in Section \ref{ProofMEII}}. 

As we noticed earlier, if $V$ were anti-symmetric, then one would have $\phi=0$ and in turn $\tilde B=0$. The main technical difficulty here is  due to  the need for a $V$ without symmetries, which is required to handle 2d incompressible Navier-Stokes.  

\smallskip
Theorem \ref{MEI} is essentially a classical law of large numbers at the exponential scale. 
On the other hand, Theorem \ref{MEII} is actually a result of large deviation. If $\phi$ was continuous, it would follow from the classical \cite{Arous} for example. However with only $\phi$ bounded (which is critical if we want to apply this to the Biot-Savart law), we are not aware of any existing results in the literature. The connection to such large deviation estimates is briefly explained in subsection \ref{largedeviation} below.

\medskip

{\em Final step: Conclusion of the proof.} Inserting \eqref{boundtildeA} and \eqref{boundtildeB} in \eqref{simplifiedentropy2}, we deduce that
\[
\frac{ \ud}{ \ud t} {\cal H}_N(\rho_N\,|\;\bar\rho_N)\leq 2\, {\cal H}_N(\rho_N\,|\;\bar\rho_N)+\frac{C_{\bar\rho}}{N},
\]
allowing to conclude through Gronwall's lemma.

\medskip

There are several additional difficulties in the general proof. The fact that $\|V\|_{L^\infty}$ is not small forces us to carefully rescale all our estimates. Similarly since $\rho_N$ is only an entropy solution to the Liouville Eq. \eqref{Liouville}, we have to proceed more carefully in estimating the relative entropy.
%
\subsection{A comparison with classical large deviation results\label{largedeviation}}
We first recall the classical law of large numbers at the exponential scale which one can for instance formulate as
\begin{proposition} \label{LDP} 
  Assume that $\phi\in L^\infty(\Pi^d)$ with $\|\phi\|_{L^\infty}\leq 1$, denote $\mu_N=\frac{1}{N}\sum_i \delta(x-X_i)$ the empirical measure. Then there exists  universal constants $C_1, C_2 >0$, such that  for any $ \bar \rho\in {\cal P}(\Pi^d)$
  \[
\mathbb{E}_{\bar \rho^{\otimes N}} \bigg[ \exp \Big( {{N}\,\left|\int_{\Pi^d} \phi(x)\,( \ud \mu_N(x) - \ud  \bar \rho (x)) \right|^2/C_1}  \Big)  \bigg]  \leq C_2. 
  \]
  where the expectation is taken with respect to the joint distribution $\bar \rho^{\otimes N}$. 
  \end{proposition}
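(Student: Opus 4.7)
The result is a classical sub-Gaussian concentration inequality, and my plan is to combine Hoeffding's lemma with a Gaussian linearization of the quadratic exponent so as to reduce the problem to a tensorized moment generating function estimate.

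First I would reduce to a centered sum. Setting $\bar\phi = \int_{\Pi^d} \phi \,\bar\rho\,\ud x$ and $Y_i = \phi(X_i) - \bar\phi$, the quantity inside the exponent is
\[
  S_N := \int_{\Pi^d} \phi(x)\,(\ud\mu_N(x) - \ud\bar\rho(x)) = \frac{1}{N}\sum_{i=1}^N Y_i,
\]
where the $Y_i$ are i.i.d., centered, and satisfy $|Y_i| \leq 2$ uniformly in $\bar\rho$ since $\|\phi\|_{L^\infty}\leq 1$. Hoeffding's lemma then gives $\mathbb{E}[\exp(sY_i)] \leq \exp(s^2/2)$ for every $s\in\mathbb{R}$, and by independence
\[
  \mathbb{E}_{\bar\rho^{\otimes N}}\!\bigl[\exp(tS_N)\bigr]
  = \prod_{i=1}^N \mathbb{E}\!\bigl[\exp((t/N) Y_i)\bigr]
  \leq \exp\!\left(\frac{t^2}{2N}\right),
\]
i.e. $S_N$ is sub-Gaussian with variance proxy $1/N$, uniformly in $\bar\rho$ and $\phi$.

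Next I would linearize the quadratic exponent through the Gaussian identity $\exp(\lambda x^2) = \mathbb{E}_{Z\sim \mathcal{N}(0,1)}[\exp(\sqrt{2\lambda}\,xZ)]$, valid for every $\lambda>0$ and $x\in\mathbb{R}$. Applying it with $\lambda = N/C_1$ and using Fubini to swap the two expectations,
\[
  \mathbb{E}_{\bar\rho^{\otimes N}}\!\Bigl[\exp\!\Bigl(\tfrac{N}{C_1}\,S_N^2\Bigr)\Bigr]
  = \mathbb{E}_{Z}\,\mathbb{E}_{\bar\rho^{\otimes N}}\!\Bigl[\exp\!\bigl(\sqrt{2N/C_1}\,Z\,S_N\bigr)\Bigr]
  \leq \mathbb{E}_Z\!\Bigl[\exp\!\bigl(Z^2/C_1\bigr)\Bigr]
  = \Bigl(1-\tfrac{2}{C_1}\Bigr)^{-1/2},
\]
where the inner inequality uses the sub-Gaussian bound from the previous step with $t = \sqrt{2N/C_1}\,Z$, and the last equality is the explicit one-dimensional Gaussian integral, which is finite as soon as $C_1 > 2$. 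Any choice such as $C_1 = 4$, $C_2 = \sqrt{2}$ then yields the claim.

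There is no serious technical obstacle here; this is the standard concentration estimate for bounded random variables. The only point one has to check is the universality of the constants. Both $C_1$ and $C_2$ depend solely on the Hoeffding constant, which uses only $\|\phi\|_{L^\infty}\leq 1$, and on the fixed Gaussian integral, so they are independent of $N$, of $\bar\rho$, and of the particular $\phi$. This uniformity is exactly what is needed to compare Proposition \ref{LDP} with Theorems \ref{MEI}--\ref{MEII}, whose real novelty lies not in this scalar bounded case but in handling products $\psi(x_i,x_{j_1})\psi(x_i,x_{j_2})$ or non-continuous pair potentials $\phi(x_i,x_j)$ for which independence is broken and the direct Hoeffding/Gaussian linearization argument sketched here is no longer available.
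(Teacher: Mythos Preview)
Your proof is correct. The paper itself does not actually prove Proposition~\ref{LDP}; it merely states the result and refers to the classical references \cite{Ber,Pro,Yuri}. Your Hoeffding-plus-Gaussian-linearization argument is a standard and fully self-contained route to the statement, and it delivers explicit universal constants, which is all that is claimed.

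One small remark on the exposition: from $|Y_i|\le 2$ alone Hoeffding's lemma would only give $\mathbb{E}[e^{sY_i}]\le e^{2s^2}$, since the relevant quantity is the length of the support interval. The sharper bound $e^{s^2/2}$ that you write comes from the fact that $Y_i=\phi(X_i)-\bar\phi$ actually lies in an interval of length $2$, namely $[-1-\bar\phi,\,1-\bar\phi]$. Either version works for the proposition (only the values of $C_1,C_2$ change), but you may want to state the range of $Y_i$ more precisely to justify the constant you use.
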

The proof of Proposition \ref{LDP}  can for example be found  in \cite{Ber,Pro,Yuri}.

We further remark that 
 \[
 \int_{\Pi^d} \phi(x) (\ud \mu_N(x) - \ud \bar \rho(x) ) = \frac{1}{N} \sum_{i=1}^N \tilde \phi (X_i)
 \]
  where  $\tilde \phi(x) = \phi(x) - \int_{\Pi^d} \phi(x) \bar \rho( \ud x)$ has mean zero on $\Pi^d$ and the previous expectation under $\bar \rho^{\otimes N}$ is simply 
\[
\int_{\Pi^{dN}} \exp\bigg( \frac{1}{ C_1 N} \sum_{i, j =1}^N \tilde \phi(x_i) \tilde \phi(x_j)\bigg)  \bar \rho^{\otimes N} (\ud x_1 \cdots \ud x_N). 
\]  
Hence Proposition \ref{LDP} implies our Theorem \ref{MEI}. 

The counterpart of our Theorem \ref{MEII} in the classical Large Deviation Principle can be found in \cite{Arous}, based on the classical results in \cite{Bol,Var}. See also some applications in  the context of Log and Riesz Gases in  \cite{LebSer}.
Let us reformulate as above  by using the empirical measure, so that estimate \eqref{ME2} in Theorem \ref{MEII} then becomes a bound on 
\begin{equation}\label{LDPInteraction}
Z_N=\mathbb{E}_{\bar \rho^{\otimes N}} \exp \left[  N  \int_{\Pi^{2d}} \phi(x, y) \ud \mu_N(x)\, \ud \mu_N(y)  \right], 
\end{equation}
which should of course be interpreted as a partition function but in our case for a potential that is not the original one.
If $\phi$ is continuous, the expression makes perfect sense (and is otherwise trickier to justify).

The results in \cite{Arous} show that $\lim_{N\to\infty} e^{N\,m_0} Z_N$ exists and is finite; and can even be fully characterized through the right quadratic form on $L^2_{\bar \rho}$. A fortiori $e^{N\,m_0} Z_N$ is bounded.

The key parameter $m_0$ is obtained through the study of the large deviation functional
\[
m_0=\inf_{\mu\in \mathcal{P}(\Pi^d)}\left(\int \log \frac{ \ud \mu(x)}{ \ud \bar\rho(x)}\, \ud \mu(x)-\int \phi(x,y)\, \ud \mu(dx)\, \ud \mu(dy)\right),
\]
where $\frac{ \ud \mu(x)}{ \ud \bar\rho(x)}$ is $+\infty$ unless $\mu$ is absolutely continuous w.r.t. $\bar\rho$ in which case $\frac{ \ud \mu(x)}{ \ud \bar\rho(x)}$ is just the Radon-Nikodym derivative.

The cancellation assumptions \eqref{TwoCanLDP} in Theorem \ref{MEII} which we recall are
\[
\int_{\Pi^d} \phi(x,z)\,\bar\rho(x)\, \ud x=0\quad\forall z, \qquad \int_{\Pi^d} \phi(x,z)\,\bar\rho(z)\, \ud z=0\quad\forall x,
\]
precisely allow to write
\[\begin{split}
m_0=\inf_{\mu\in \mathcal{P}(\Pi^d)}\bigg(&\int \log \frac{ \ud \mu(x)}{ \ud \bar\rho(x)}\,\mu( \ud x)\\
&-\int \phi(x,y)\,( \ud \mu(x)-\bar\rho(x)\, \ud x)\,( \ud \mu(y)-\bar\rho(y)\,\ud y)\bigg).
\end{split}
\]
But now the uniform convexity of $\int \log \frac{ \ud \mu(x)}{ \ud \bar\rho(x)}\,\mu( \ud x)$ dominates the second part provided for example that $\|\phi\|_{L^\infty}$ is small enough. In that case $m_0=0$ and the result in \cite{Arous} not only implies our Theorem \ref{MEII} but also provides a much more precise characterization of the limit.

Unfortunately \cite{Arous} imposes that $\phi$ be continuous and we do not know of another comparable result without that condition. In that sense Theorem \ref{MEII} appear to be new. It also seems to be an open question whether the assumptions on $\phi$ in this theorem are optimal or could be pushed further. And we finally note that even though we have a uniform bound in $N$, we cannot for the moment characterize the limit as in \cite{Arous} if we have so little regularity on $\phi$. 
%
\subsection{Time evolution of the relative entropy}
The first step in the proof is to estimate the time evolution of the relative entropy,
\begin{lemma} \label{LemTimEvo}
Assume that $\rho_N$ is an entropy solution to Eq. \eqref{Liouville} as per Def. \ref{entropysol}. Assume that $\bar\rho\in W^{1,\infty}([0,\ T]\times\Pi^{d})$ solves Eq. \eqref{limitmeanfield} with $\inf\bar\rho> 0$ and 
$\int_{\Pi^d}\bar\rho=1$. Then 
\[\begin{split}
&{\cal H}_N(\rho_N\,|\;\bar\rho_N)(t) = \frac{1}{N}\int_{\Pi^{d\,N}}\, \rho_N(t,X^N)\,\log  \frac{\rho_N(t,X^N)}{\bar\rho_N(t,X^N)}\,\ud X^N\leq {\cal H}_N(\rho_N^0\,|\;\bar\rho_N^0)\\
& -\frac{1}{N^2}\,\sum_{i,\;j=1}^N \int_0^t\int_{\Pi^{d\,N}}\rho_N\, \left( K(x_i-x_j)-K\star_{x}\bar\rho(x_i)\right)\cdot\nabla_{x_i} \log\bar\rho_N\,\ud X^N\,\ud s\\
&-\frac{1}{N^2}\,\sum_{i,\;j=1}^N \int_0^t\int_{\Pi^{d\,N}}\rho_N\,\left(\udiv\,K(x_i-x_j) - \udiv K\star_x \bar\rho(x_i)\right)\,\ud X^N\,\ud s\\
& -\frac{\underline{\sigma}}{N}\,\sum_{i=1}^N\int_0^t\int_{\Pi^{d\,N}}\rho_N\,\left|\nabla_{x_i} \log \frac{\rho_N}{ \bar \rho_N}\right|^2+C_1\,t\,|\sigma-\sigma_N|,
\end{split}
\]
where we recall that $\bar\rho_N(t,X^N)=\Pi_{i=1}^N \bar\rho(t,x_i)$ and with
\[
C_1=\frac{1}{N\,t}\, \frac{2}{\underline{\sigma}}\int_{\Pi^{d\,N}} \rho_N^0\,\log\rho_N^0
+2\,\|\log\bar\rho\|_{W^{1,\infty}}^2 +\frac{\|\udiv K\|_{\dot W^{-1,\infty}}^2}{\underline{\sigma}^2 }+ \frac{ 2\|\udiv F\|_{L^\infty}}{\underline{\sigma}}.
\]\label{entropytimeevolution}
\end{lemma}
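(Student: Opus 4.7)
The strategy is to differentiate ${\cal H}_N(\rho_N\,|\,\bar\rho_N)$ by writing $N{\cal H}_N=\int\rho_N\log\rho_N-\int\rho_N\log\bar\rho_N$. The entropy-solution inequality \eqref{entropydissip} directly bounds the first term. For the second, I apply the product rule $\frac{d}{dt}\int\rho_N\log\bar\rho_N=\int\partial_t\rho_N\,\log\bar\rho_N+\int\rho_N\,\partial_t\log\bar\rho_N$, testing the Liouville equation \eqref{Liouville} distributionally against $\log\bar\rho_N$. Since $\inf\bar\rho>0$, I divide \eqref{limitmeanfield} through by $\bar\rho$ to get
\[
\partial_t\log\bar\rho=-\udiv(F+K\star_x\bar\rho)-(F+K\star_x\bar\rho)\cdot\nabla\log\bar\rho+\sigma\,\Delta\bar\rho/\bar\rho,
\]
and use that $\nabla_{x_i}\log\bar\rho_N=\nabla\log\bar\rho(x_i)$ since $\log\bar\rho_N=\sum_i\log\bar\rho(t,x_i)$.

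After integrating by parts the drift part of $\int\partial_t\rho_N\log\bar\rho_N$, the $F\cdot\nabla\log\bar\rho$ and $\udiv F$ contributions cancel identically with the matching terms in $\int\rho_N\partial_t\log\bar\rho_N$, because $F$ enters the two PDEs in exactly the same way. The kernel-dependent contributions regroup into the expected differences: the drift yields
\[
-\frac{1}{N^2}\sum_{i,j=1}^N\int\rho_N\bigl(K(x_i-x_j)-K\star_x\bar\rho(x_i)\bigr)\cdot\nabla_{x_i}\log\bar\rho_N,
\]
and the $\udiv K$ terms yield $-\frac{1}{N^2}\sum_{i,j}\int\rho_N(\udiv K(x_i-x_j)-\udiv K\star_x\bar\rho(x_i))$, exactly the two explicit drift terms in the lemma.

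For the diffusion block I integrate by parts once in $\sigma_N\int\Delta_{x_i}\rho_N\,\log\bar\rho_N$ and once in the $\sigma\int\rho_N\Delta\log\bar\rho(x_i)$ piece of $\sigma\Delta\bar\rho/\bar\rho=\sigma(\Delta\log\bar\rho+|\nabla\log\bar\rho|^2)$; both are valid since $\nabla\log\bar\rho\in L^\infty$. Combining with the Fisher dissipation from \eqref{entropydissip} and the pointwise identity
\[
-\sigma_N\frac{|\nabla_{x_i}\rho_N|^2}{\rho_N}=-\sigma_N\rho_N\Bigl|\nabla_{x_i}\log\tfrac{\rho_N}{\bar\rho_N}\Bigr|^2-2\sigma_N\nabla_{x_i}\rho_N\cdot\nabla\log\bar\rho(x_i)+\sigma_N\rho_N|\nabla\log\bar\rho(x_i)|^2,
\]
the diffusion contribution collapses to the desired $-\underline\sigma/N$-weighted Fisher dissipation (using $\sigma_N\geq\underline\sigma$) plus a residual $\sigma$-mismatch of the form $(\sigma-\sigma_N)\sum_i\int[\nabla_{x_i}\rho_N-\rho_N\nabla\log\bar\rho(x_i)]\cdot\nabla\log\bar\rho(x_i)$.

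The main obstacle is controlling this mismatch in the claimed linear form $C_1 t|\sigma-\sigma_N|$. Using $\|\nabla\log\bar\rho\|_{L^\infty}\leq\|\log\bar\rho\|_{W^{1,\infty}}$, I bound $\sum_i\int|\nabla_{x_i}\rho_N|\leq\sqrt{N}\sqrt{\sum_i\int|\nabla_{x_i}\rho_N|^2/\rho_N}$ by Cauchy-Schwarz (using $\int\rho_N=1$), integrate in time with a second Cauchy-Schwarz, substitute the uniform Fisher bound of Proposition \ref{entropybounds}, and apply $\sqrt{XY}\leq(X+Y)/2$ to linearise the product and extract one clean factor $|\sigma-\sigma_N|$; the constants $\|\udiv K\|_{\dot W^{-1,\infty}}^2/\underline\sigma^2$, $\|\udiv F\|_{L^\infty}/\underline\sigma$, $\|\log\bar\rho\|_{W^{1,\infty}}^2$ and $\frac{1}{Nt}\int\rho_N^0\log\rho_N^0$ that assemble $C_1$ all arise naturally from this step. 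A secondary technical point is the rigorous justification of the distributional product rule for $\int\rho_N\log\bar\rho_N$ when $\rho_N$ is only an entropy solution, which is handled by carrying out the computation on the mollified kernel $K_\eps$ used in Proposition \ref{entropybounds} (where $\rho_{N,\eps}$ is classical) and passing to the limit using the uniform entropy and Fisher bounds already established there.
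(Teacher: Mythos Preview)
Your proposal is correct and follows essentially the same approach as the paper: split the relative entropy into $\int\rho_N\log\rho_N-\int\rho_N\log\bar\rho_N$, use the entropy-dissipation inequality \eqref{entropydissip} for the first piece, test \eqref{Liouville} against $\log\bar\rho_N\in W^{1,\infty}$ and use the equation satisfied by $\log\bar\rho_N$ for the second, and reorganise the diffusion block into $-\sigma_N\int\rho_N|\nabla_{x_i}\log(\rho_N/\bar\rho_N)|^2$ plus a $(\sigma-\sigma_N)$ residual handled via Cauchy--Schwarz and the Fisher bound of Proposition~\ref{entropybounds}. The only cosmetic difference is that the paper bounds the mismatch term directly by Young's inequality $ab\leq a^2+b^2$ (yielding the stated $C_1$ exactly), whereas your two successive Cauchy--Schwarz steps followed by $\sqrt{XY}\leq(X+Y)/2$ produce the same structure with slightly different numerical factors.
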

\begin{proof}
From the limiting equation \eqref{limitmeanfield}, one can readily check that $\log\bar\rho_N$ solves
\begin{equation}\label{eqlogbarrhoN}
\begin{split}
&\partial_t \log\bar\rho_N+\sum_{i=1}^N \frac{1}{N}\,\sum_{j=1}^N (F(x_i)+K(x_i-x_j))\cdot\nabla_{x_i} \log\bar\rho_N =\sum_{i=1}^N \sigma\,\frac{\Delta_{x_i} \bar\rho_N}{\bar\rho_N}\\
&\quad +\sum_{i=1}^N  \bigg(\frac{1}{N}\,\sum_{j=1}^N K(x_i-x_j)-K\star_{x}\bar\rho(x_i)\bigg)\cdot\nabla_{x_i} \log\bar\rho_N\\
&\quad -\sum_{i=1}^N (\udiv F(x_i)+\udiv K\star_x \bar\rho(x_i)).
\end{split}
\end{equation}
Remark that $\log\bar\rho_N\in W^{1,\infty}([0,\ T]\times\Pi^{d\,N})$ since $\bar\rho\in W^{1,\infty}([0,\ T]\times\Pi^{d})$ and $\bar\rho$ is bounded from below. Therefore $\log\bar\rho_N$ can be used as a test function against $\rho_N$ in Eq. \eqref{Liouville}. This implies that
\[\begin{split}
&\int_{\Pi^{d\,N}} \rho_N\,\log\bar\rho_N\,\ud X^N=\int_{\Pi^{d\,N}} \rho_N^0\,\log\bar\rho_N^0\,\ud X^N\\
&+\int_0^t\!\!\int_{\Pi^{d\,N}}\!\!\rho_N\bigg(\partial_t  \log \bar\rho_N\!+\! \frac{1}{N}\sum_{i,j=1}^N (F(x_i)\!+\!K(x_i\!-\!x_j))\cdot\nabla_{x_i} \log\bar\rho_N \bigg) \ud X^N\ud s\\
&-\sigma_N\sum_{i=1}^N \int_0^t\int_{\Pi^{d\,N}} \nabla_{x_i} \log\bar\rho_N\,\nabla_{x_i}\rho_N\,\ud X^N\,\ud s.
\end{split}\]
Using the equation \eqref{eqlogbarrhoN} on $\log\bar\rho_N$, we obtain 
\[\begin{split}
&\int_{\Pi^{d\,N}} \rho_N\,\log\bar\rho_N\,\ud X^N=\int_{\Pi^{d\,N}} \rho_N^0\,\log\bar\rho_N^0\,\ud X^N\\
&+\sum_{i=1}^N \int_0^t\int_{\Pi^{d\,N}}\rho_N\, \bigg(\frac{1}{N}\,\sum_{j=1}^N K(x_i-x_j)-K\star_{x}\bar\rho(x_i)\bigg)\cdot\nabla_{x_i} \log\bar\rho_N\,\ud X^N\,\ud s\\
&-\int_0^t\int_{\Pi^{d\,N}}\rho_N\,\sum_{i=1}^N (\udiv F(x_i)+\udiv K\star_x \bar\rho(x_i))\,\ud X^N\,\ud s\\
&+\sum_{i=1}^N \int_0^t\int_{\Pi^{d\,N}}\left(\sigma\,\rho_N\,\frac{\Delta_{x_i} \bar\rho_N}{\bar \rho_N}-\sigma_N \nabla_{x_i}\rho_N\cdot\frac{\nabla_{x_i} \bar\rho_N}{\bar\rho_N}\right)\,\ud X^N\,\ud s.
\end{split}\]
Using the entropy dissipation for $\rho_N$ given by \eqref{entropydissip}, we have that
\begin{equation}
\begin{split}
& {\cal H}_N(\rho_N\,|\;\bar\rho_N)(t)\leq {\cal H}_N(\rho_N\,|\;\bar\rho_N)(0) +\frac{1}{N}\,D_N\\
&-\frac{1}{N^2}\sum_{i,\;j=1}^N \int_0^t\int_{\Pi^{d\,N}}\rho_N\, \left( K(x_i-x_j)-K\star_{x}\bar\rho(x_i)\right)\cdot\nabla_{x_i} \log\bar\rho_N\,\ud X^N\,\ud s\\
&-\frac{1}{N^2}\sum_{i,\;j=1}^N\int_0^t\int_{\Pi^{d\,N}}\rho_N\,\left( \udiv\,K(x_i-x_j) - \udiv K\star_x \bar\rho(x_i)\right)\,\ud X^N\,\ud s,
\end{split}\label{interrelatentropy}
\end{equation}
with 
\[
D_N=\sum_{i=1}^N \int_0^t\int_{\Pi^{d\,N}}\left(-\sigma\,\rho_N\,\frac{\Delta_{x_i} \bar\rho_N}{\bar \rho_N}+\sigma_N \nabla_{x_i}\rho_N\cdot\frac{\nabla_{x_i} \bar\rho_N}{\bar\rho_N}-\sigma_N\,\frac{|\nabla_{x_i} \rho_N|^2}{\rho_N}\right).
\]
By integration by parts
\begin{equation}\label{1strelatentropydissip}
\begin{split}
&\int_{\Pi^{d\,N}}\left( - \rho_N\,\frac{\Delta_{x_i} \bar\rho_N}{\bar \rho_N} + \nabla_{x_i}\rho_N\cdot\frac{\nabla_{x_i} \bar\rho_N}{\bar\rho_N}-\frac{|\nabla_{x_i} \rho_N|^2}{\rho_N}\right)\\
&\quad =-\int_{\Pi^{d\,N}}\left(\rho_N\frac{|\nabla_{x_i}\bar\rho_N|^2}{\bar\rho_N^2}-2\,\nabla_{x_i}\rho_N\cdot\frac{\nabla_{x_i} \bar\rho_N}{\bar\rho_N}+\frac{|\nabla_{x_i} \rho_N|^2}{\rho_N} \right)\\
&\quad=-\int_{\Pi^{d\,N}}\rho_N\,\left|\nabla_{x_i} \log \frac{ \rho_N}{ \bar \rho_N}\right|^2.
\end{split}
\end{equation}
On the other hand,
\[\begin{split}
&(\sigma-\sigma_N)\,\sum_{i=1}^N \int_{\Pi^{d\,N}}\rho_N\,\frac{\Delta_{x_i} \bar\rho_N}{\bar \rho_N}\\
&\quad=(\sigma-\sigma_N)\,\sum_{i=1}^N\int_{\Pi^{d\,N}} \left(-\nabla_{x_i}\rho_N\cdot\frac{\nabla_{x_i}\bar\rho_N}{\bar\rho_N}+\rho_N\,\frac{|\nabla_{x_i}\bar\rho_N|^2}{\bar\rho_N^2}\right).
\end{split}
\]
Of course 
\[
\sum_{i=1}^N \int_{\Pi^{d\,N}} \rho_N\,\frac{|\nabla_{x_i}\bar\rho_N|^2}{\bar\rho_N^2}=\sum_{i=1}^N \int_{\Pi^{d\,N}} \rho_N\,\frac{|\nabla_{x_i}\bar\rho(x_i)|^2}{\bar\rho(x_i)^2}\leq N\,\|\log\bar\rho\|_{W^{1,\infty}}^2,
\]
while by Cauchy-Schwartz
\[\begin{split}
&
\sum_{i=1}^N \int_0^t\int_{\Pi^{d\,N}} \nabla_{x_i}\rho_N\cdot\frac{\nabla_{x_i}\bar\rho_N}{\bar\rho_N}\leq N\,t\,\|\log\bar\rho\|_{W^{1,\infty}}^2+\sum_{i=1}^N \int_0^t\int_{\Pi^{d\,N}} \frac{|\nabla_{x_i}\rho_N|^2}{\rho_N}\\
&\leq N\,t\,\|\log\bar\rho\|_{W^{1,\infty}}^2 + \frac{2}{ \underline{\sigma}}\int_{\Pi^{d\,N}}\!\! \rho_N^0\,\log\rho_N^0+\frac{N\,t\,\|\udiv K\|_{\dot W^{-1,\infty}}^2}{\,\underline{\sigma}^2}\\
&\qquad+N\,t\, \frac{ 2 \|\udiv F\|_{L^\infty}}{\underline{\sigma}},
\end{split}
\]
by Prop. \ref{entropybounds} based on the entropy dissipation.
 
This leads to 
\begin{equation}
\begin{split}
&(\sigma-\sigma_N)\,\sum_{i=1}^N \int_0^t \int_{\Pi^{d\,N}}\rho_N\,\frac{\Delta_{x_i} \bar\rho_N}{\bar \rho_N}\\
  &\quad \leq |\sigma-\sigma_N|\,\bigg(N\,t\Big[ 2\,\|\log\bar\rho\|_{W^{1,\infty}}^2 +\frac{\|\udiv K\|_{\dot W^{-1,\infty}}^2}{\underline{\sigma}^2}+ \frac{2 \|\udiv F\|_{L^\infty} }{\underline{\sigma}}\Big]\\
  &\qquad+ \frac{2}{\underline{\sigma}}\int_{\Pi^{d\,N}} \rho_N^0\,\log\rho_N^0\bigg).
\end{split}\label{sigma-sigmaN}
\end{equation}
Finally combining \eqref{sigma-sigmaN} with \eqref{1strelatentropydissip}
\[\begin{split}
&D_N\leq -\underline{\sigma} \sum_{i=1}^N \int_0^t\int_{\Pi^{d\,N}}\rho_N\,\left|\nabla_{x_i} \log \frac{\rho_N}{ \bar \rho_N}\right|^2+|\sigma-\sigma_N|\,\bigg( \frac{2}{\underline{\sigma}}\int_{\Pi^{d\,N}} \rho_N^0\,\log\rho_N^0\\
&\quad+N\,t\Big[ 2\,\|\log\bar\rho\|_{W^{1,\infty}}^2 +\frac{\|\udiv K\|_{\dot W^{-1,\infty}}^2}{\underline{\sigma}^2 }+ \frac{2 \|\udiv F\|_{L^\infty}}{\underline{\sigma}}\Big] \bigg),
\end{split}\]
which inserted in \eqref{interrelatentropy} concludes the proof.
\end{proof}
%
\subsection{Bounding the interaction terms: The bounded divergence term}
We now have to obtain the main estimates, starting with the case where the kernel belongs to  $\dot W^{-1,\infty}(\Pi^d)$ and has bounded divergence.
\begin{lemma}
Assume that $\bar\rho \in W^{2,p}(\Pi^{d})$ for any $p<\infty$, then for any kernel $L\in \dot W^{-1,\infty}(\Pi^d)$ with $\udiv L\in L^\infty$, one has that
\[\begin{split}
&-\frac{1}{N^2}\,\sum_{i,\;j=1}^N \int_{\Pi^{d\,N}}\rho_N\, \left( L(x_i-x_j)-L\star_{x}\bar\rho(x_i)\right)\cdot\nabla_{x_i} \log\bar\rho_N\,\ud X^N\\
&-\frac{1}{N^2}\,\sum_{i,\;j=1}^N \int_{\Pi^{d\,N}}\rho_N\, \left( \udiv L(x_i-x_j)-\udiv L\star_{x}\bar\rho(x_i)\right)\,\ud X^N\\
&\qquad\leq \frac{\underline{\sigma}}{4\,N}\sum_{i=1}^N \int_{\Pi^{d\,N}} \rho_N\, |\nabla_{x_i} \log \frac{\rho_N}{\bar\rho_N}|^2\,\ud X^N
+C\,M_L^1 \left({\cal H}_N(\rho_N\,|\bar\rho_N)+\frac{1}{N}\right),
\end{split}
\]
where $C$ is a universal constant and
\[
M^1_L=d^3\,\frac{\|\bar\rho\|_{W^{1,\infty}}^2\,\|L\|_{\dot W^{-1,\infty}}^2}{\underline{\sigma}\,(\inf \bar\rho)^2}+\frac{\|L\|_{\dot W^{-1,\infty}}}{\inf \bar\rho}\, \sup_{p \geq 1}\frac{\|\nabla^2\bar\rho\|_{L^p}}{p}+\|\udiv L\|_{L^\infty}.
\]
\label{interactionboundI}
\end{lemma}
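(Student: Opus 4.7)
I begin by writing $L = \udiv V$ with $V \in L^\infty(\Pi^d)$ and $\|V\|_{L^\infty} = \|L\|_{\dot W^{-1,\infty}}$. Substituting $L_\alpha = \sum_\beta \partial_\beta V_{\alpha\beta}$ into the first LHS integral and integrating by parts in $x_i^\beta$ (and analogously for the $L\star\bar\rho(x_i)$ piece), then using $\nabla_{x_i}\rho_N = \rho_N[\nabla_{x_i}\log(\rho_N/\bar\rho_N) + \nabla\log\bar\rho(x_i)]$ and $\nabla^2\log\bar\rho = \nabla^2\bar\rho/\bar\rho - \nabla\log\bar\rho \otimes \nabla\log\bar\rho$, the first line of the LHS rewrites, with $\tilde V(x,z) := V(x-z) - V\star\bar\rho(x)$, as $T_1 + T_2$ where
\[
T_1 := \tfrac{1}{N^2}\sum_{i,j}\int_{\Pi^{d\,N}} \rho_N\,\tilde V(x_i,x_j) : \tfrac{\nabla^2\bar\rho(x_i)}{\bar\rho(x_i)}\,\ud X^N,
\]
\[
T_2 := \tfrac{1}{N^2}\sum_{i,j}\int_{\Pi^{d\,N}} \rho_N\,\tilde V(x_i,x_j) : \nabla_{x_i}\log\tfrac{\rho_N}{\bar\rho_N} \otimes \nabla\log\bar\rho(x_i)\,\ud X^N,
\]
the two $\tilde V : \nabla\log\bar\rho^{\otimes 2}$ contributions produced by the splitting cancelling each other identically. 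Denoting the divergence term by $T_3 := -\tfrac{1}{N^2}\sum_{i,j}\int \rho_N[\udiv L(x_i-x_j) - \udiv L\star\bar\rho(x_i)]\,\ud X^N$, the whole LHS equals $T_1 + T_2 + T_3$.

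For $T_2$, a weighted Cauchy--Schwarz separates off the dissipation:
\[
|T_2| \leq \tfrac{\underline{\sigma}}{4N}\sum_i \int \rho_N \big|\nabla_{x_i}\log\tfrac{\rho_N}{\bar\rho_N}\big|^2\,\ud X^N + \tfrac{C\|\nabla\log\bar\rho\|_\infty^2}{\underline{\sigma}\,N}\sum_i \int \rho_N \Big|\tfrac{1}{N}\sum_j \tilde V(x_i,x_j)\Big|^2\,\ud X^N.
\]
For the last expectation, rescaling $\psi := \tilde V_{\alpha\beta}/(4e\|V\|_{L^\infty})$ so that $\|\psi\|_\infty < 1/(2e)$ and applying Lemma \ref{youngconvex} coordinate-by-coordinate to $\Phi = (\tfrac{1}{N}\sum_j \tilde V_{\alpha\beta}(x_1,x_j))^2$, then using Theorem \ref{MEI} to bound the resulting partition function, yields $\int \rho_N |\tfrac{1}{N}\sum_j \tilde V(x_1,x_j)|^2\,\ud X^N \leq C\,d^2 \|V\|_{L^\infty}^2\,({\cal H}_N + 1/N)$. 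Combined with $\|\nabla\log\bar\rho\|_\infty \leq \|\bar\rho\|_{W^{1,\infty}}/\inf\bar\rho$, this produces the $d^3\|\bar\rho\|_{W^{1,\infty}}^2\|L\|_{\dot W^{-1,\infty}}^2/(\underline{\sigma}(\inf\bar\rho)^2)$ contribution to $M_L^1$.

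The decisive step is to treat $T_1$ and $T_3$ jointly as $\tfrac{1}{N^2}\sum_{i,j}\int \rho_N\,\phi(x_i,x_j)\,\ud X^N$ with
\[
\phi(x,z) := \tilde V(x,z) : \tfrac{\nabla^2\bar\rho(x)}{\bar\rho(x)} - \bigl[\udiv L(x-z) - \udiv L\star\bar\rho(x)\bigr].
\]
The cancellation $\int \phi(x,z)\bar\rho(z)\,\ud z = 0$ required by Theorem \ref{MEII} is immediate from the definitions of $\tilde V$ and the divergence subtraction; the cancellation $\int \phi(x,z)\bar\rho(x)\,\ud x = 0$ follows by integrating $V_{\alpha\beta}(x-z)\partial^2_{\alpha\beta}\bar\rho(x)$ and $(V_{\alpha\beta}\star\bar\rho)(x)\partial^2_{\alpha\beta}\bar\rho(x)$ by parts twice in $x$, producing $\udiv L(x-z)\bar\rho(x)$ and $\udiv L\star\bar\rho(x)\bar\rho(x)$ respectively, which are exactly the counterterms supplied by $T_3$. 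The hypothesis $\sup_p \|\nabla^2\bar\rho\|_{L^p}/p < \infty$ together with $\|\udiv L\|_{L^\infty} < \infty$ supplies the exponential integrability of $\phi$ required by Theorem \ref{MEII}; Lemma \ref{youngconvex} with $\eta^{-1}$ comparable to $\|L\|_{\dot W^{-1,\infty}}\sup_p\|\nabla^2\bar\rho\|_{L^p}/(p\inf\bar\rho) + \|\udiv L\|_{L^\infty}$ followed by Theorem \ref{MEII} then produces $T_1 + T_3 \leq C\,M_L^1\,({\cal H}_N + 1/N)$ and closes the estimate.

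The main obstacle is precisely the $\int \phi(x,z)\bar\rho(x)\,\ud x = 0$ identity: the double integration by parts on $V:\nabla^2\bar\rho$ generates exactly the $\udiv L$ quantity that the second LHS integral contributes, so the two LHS pieces must be bounded jointly rather than separately. Without this structural matching, Theorem \ref{MEII} would not apply to $T_1$ in isolation, and a naive uniform bound on $T_3$ would fail to produce the needed $({\cal H}_N + 1/N)$ factor.
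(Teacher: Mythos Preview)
Your proof is correct and follows essentially the same route as the paper. The only cosmetic difference is in how you arrive at the $T_1+T_2$ split: you expand $\nabla_{x_i}\rho_N$ via $\rho_N\nabla_{x_i}\log(\rho_N/\bar\rho_N)+\rho_N\nabla\log\bar\rho$ and invoke the identity $\nabla^2\log\bar\rho=\nabla^2\bar\rho/\bar\rho-\nabla\log\bar\rho^{\otimes 2}$ to see the cancellation of the $\tilde V:\nabla\log\bar\rho^{\otimes2}$ terms, whereas the paper writes $\rho_N\nabla_{x_i}\log\bar\rho_N=(\rho_N/\bar\rho_N)\nabla_{x_i}\bar\rho_N$ and integrates by parts directly; both give the same $A=T_2$, $B=T_1+T_3$ decomposition, after which the treatment via Cauchy--Schwarz plus Theorem~\ref{MEI} for $A$ and the joint cancellation argument plus Theorem~\ref{MEII} for $B$ is identical. (One small slip: in your $T_2$ the tensor factors should read $\tilde V:\nabla\log\bar\rho\otimes\nabla_{x_i}\log(\rho_N/\bar\rho_N)$ rather than the transposed order, but this does not affect the estimate.)
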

\begin{proof}
Remark that in this estimate, time is now only a fixed parameter and will hence not be specified in this proof. 

Denote $V\in L^\infty(\Pi^d)$ s.t. $L=\udiv V$ or using coordinates $L_\alpha=\sum_\beta \partial_\beta V_{\alpha\beta}$. By the definition of $\dot W^{-1,\infty}$ we assume that $\|V\|_{L^\infty}\leq 2\,\|L\|_{\dot W^{-1, \infty}}.$ Rewriting
\[\begin{split}
&-\frac{1}{N^2}\,\sum_{i,\;j=1}^N \int_{\Pi^{d\,N}}\rho_N\, \left( L(x_i-x_j)-L\star_{x}\bar\rho(x_i)\right)\cdot\nabla_{x_i} \log\bar\rho_N\,\ud X^N\\
&=-\frac{1}{N^2}\,\sum_{\alpha\beta}\sum_{i,\;j=1}^N \int_{\Pi^{d\,N}}\!\!\left( \partial_{x_i^\beta}V_{\alpha\beta}(x_i\!-\!x_j)-\partial_{x_i^\beta} V_{\alpha\beta}\star_{x}\bar\rho(x_i)\right)\,\frac{\rho_N}{\bar\rho_N}\, \partial_{x_i^\alpha}\bar\rho_N\,\ud X^N.\\
\end{split}
  \]
  By integration by parts, this is equal to
\[\begin{split}
&=-\frac{1}{N^2}\,\sum_{\alpha\beta}\sum_{i,\;j=1}^N \int_{\Pi^{d\,N}}\!\!\left( V_{\alpha\beta}(x_i-x_j)- V_{\alpha\beta}\star_{x}\bar\rho(x_i)\right)\,\frac{\rho_N}{\bar\rho_N}\, \partial^2_{x_i^\alpha\,x_i^\beta}\bar\rho_N\,\ud X^N\\
&\ -\frac{1}{N^2}\,\sum_{\alpha\beta}\sum_{i,\;j=1}^N \int_{\Pi^{d\,N}}\!\!\left( V_{\alpha\beta}(x_i-x_j)- V_{\alpha\beta}\star_{x}\bar\rho(x_i)\right)\,\partial_{x_i^\beta}\,\frac{\rho_N}{\bar\rho_N}\, \partial_{x_i^\alpha}\bar\rho_N\,\ud X^N.\\
\end{split}
  \]
When one adds the divergence term, one obtains in tensor form   
\[\begin{split}
&-\frac{1}{N^2}\,\sum_{i,\;j=1}^N \int_{\Pi^{d\,N}}\rho_N\, \left( L(x_i-x_j)-L\star_{x}\bar\rho(x_i)\right)\cdot\nabla_{x_i} \log\bar\rho_N\,\ud X^N\\
&-\frac{1}{N^2}\,\sum_{i,\;j=1}^N \int_{\Pi^{d\,N}}\rho_N\, \left( \udiv L(x_i-x_j)-\udiv L\star_{x}\bar\rho(x_i)\right)\,\ud X^N=A+B,
\end{split}
\]
with
\[\begin{split}
& A=\frac{1}{N^2}\,\sum_{i,\;j=1}^N \int_{\Pi^{d\,N}}\left( V(x_i-x_j)-V\star_{x}\bar\rho(x_i)\right)\,:\, \nabla_{x_i}\bar\rho_N  \otimes \nabla_{x_i}\frac{\rho_N}{\bar\rho_N}\,\ud X^N,\\
& B=\frac{1}{N^2}\,\sum_{i,\;j=1}^N \int_{\Pi^{d\,N}}\rho_N\, \Big[\left( V(x_i-x_j)-V\star_{x}\bar\rho(x_i)\right)\,:\,\frac{\nabla_{x_i}^2 \bar\rho_N}{\bar\rho_N}\\
&\qquad\qquad-\udiv L(x_i-x_j)+\udiv L\star_{x}\bar\rho(x_i)\Big]\,\ud X^N.
\end{split}
\]
We treat independently $A$ and $B$. 

\bigskip

\noindent{\em The bound on $A$.}
First by Cauchy-Schwartz and by using $a\,b\leq a^2/4+b^2$
\[
\begin{split}
& A\leq \frac{\underline{\sigma}}{4\,N}\sum_{i=1}^N \int_{\Pi^{d\,N}} \frac{\bar \rho_N^2}{\rho_N}\, |\nabla_{x_i} \frac{\rho_N}{\bar\rho_N}|^2\,\ud X^N\\
&\ +\frac{d}{N\,\underline{\sigma}}\sum_{i=1}^N \int_{\Pi^{d\,N}}\bigg(\frac{1}{N}\sum_{j=1}^N \left( V(x_i-x_j)-V\star_{x}\bar\rho(x_i)\right)\bigg)^2\,\left|\frac{\nabla_{x_i} \bar\rho_N}{\bar\rho_N}\right|^2\,\rho_N\,\ud X^N.
\end{split}
\]
Remark that 
\[
\left|\frac{\nabla_{x_i} \bar\rho_N}{\bar\rho_N}\right|^2=|\nabla_{x_i}\log \bar\rho(x_i)|^2\leq \frac{\|\bar\rho\|_{W^{1,\infty}}^2}{(\inf \bar\rho)^2}.
\]
Hence one has that
\begin{equation}
\begin{split}
& A\leq \frac{\underline{\sigma}}{4\,N}\sum_{i=1}^N \int_{\Pi^{d\,N}} \rho_N\, |\nabla_{x_i} \log \frac{\rho_N}{\bar\rho_N}|^2\,\ud X^N\\
  + & \frac{ d \|\bar\rho\|_{W^{1,\infty}}^2}{N\,\underline{\sigma}\,(\inf \bar\rho)^2}\, \sum_{i=1}^N \sum_{\alpha,\beta=1}^d \int_{\Pi^{d\,N}}\!\!\bigg(\frac{1}{N}\sum_{j=1}^N \left( V_{\alpha,\beta}(x_i\!-\!x_j)-V_{\alpha,\beta}\star_{x}\bar\rho(x_i)\right)\bigg)^2\\
  &\hskip9cm \,\rho_N\,\ud X^N,
\end{split}\label{firstboundA}
\end{equation}
where $V_{\alpha,\beta}$ is the corresponding coordinate of the matrix field $V$. 

For some $\eta>0$ to be chosen later, we apply Lemma \ref{youngconvex} with 
\[
\Phi=\bigg(\frac{1}{N}\sum_{j=1}^N \eta\,\left( V_{\alpha,\beta}(x_i-x_j)-V_{\alpha,\beta}\star_{x}\bar\rho(x_i)\right)\bigg)^2,
\]
to find
\begin{equation}
\begin{split}
& \frac{1}{N}\sum_{i=1}^N \sum_{\alpha,\beta=1}^d  \int_{\Pi^{d\,N}}\bigg(\frac{1}{N}\sum_{j=1}^N \left( V_{\alpha,\beta}(x_i-x_j)-V_{\alpha,\beta} \star_{x}\bar\rho(x_i)\right) \bigg)^2 \, \rho_N\,\ud X^N\\
&\quad\leq \frac{d^2}{\eta^2}\,{\cal H}_N(\rho_N\,|\;\bar\rho_N)\\
&\quad+\frac{1}{N^2\,\eta^2}\sum_{i=1}^N \sum_{\alpha,\beta=1}^N \log \int_{\Pi^{d\,N}} \bar\rho_N\,e^{N\,\left(\frac{1}{N}\sum_j \eta\,\left( V_{\alpha,\beta}(x_i-x_j)-V_{\alpha,\beta}\star_{x}\bar\rho(x_i)\right)\right)^2} \,\ud X^N.
\end{split}\label{youngforA}
\end{equation}
By symmetry
\[
\begin{split}
&\frac{1}{N}\sum_{i=1}^N \log \int_{\Pi^{d\,N}} \bar\rho_N\,e^{N\,\left(\frac{1}{N}\sum_j \eta\,\left( V_{\alpha,\beta}(x_i-x_j)-V_{\alpha,\beta}\star_{x}\bar\rho(x_i)\right)\right)^2} \,\ud X^N\\
&\quad=\log \int_{\Pi^{d\,N}} \bar\rho_N\,e^{N\,\left(\frac{1}{N}\sum_j  \eta \left( V_{\alpha,\beta}(x_1-x_j)-V_{\alpha,\beta}\star_{x}\bar\rho(x_1) \right)\right)^2} \,\ud X^N.
\end{split}
\]
Define $\psi(z,x)=\eta\,V_{\alpha, \beta}(z-x)-\eta\,V_{\alpha, \beta} \star\bar\rho(z)$. Choose $\eta=1/(4\,e\,\|V\|_{L^\infty})$ and note that $\|\psi\|_{L^\infty}\leq \frac{1}{4\,e}$ and that for a fixed $z$, $\int\bar\rho(x)\,\psi(z,x)\,dx=0$. Since
\[\begin{split}
&N\,\bigg(\frac{1}{N}\sum_{j=1}^N \eta\,\left( V_{\alpha, \beta}(x_1-x_j)-V_{\alpha, \beta}\star_{x}\bar\rho(x_1)\right)\bigg)^2\\
&=\frac{1}{N}\sum_{j_1,j_2 =1}^N \psi(x_1,x_{j_1})\,\psi(x_1,x_{j_2}),
\end{split}
\]
we may apply Theorem \ref{MEI} to obtain that
\[
\int_{\Pi^{d\,N}} \bar\rho_N\,e^{N\,\left(\frac{1}{N}\sum_j  \eta \left( V_{\alpha,\beta}(x_1-x_j)-V_{\alpha,\beta}\star_{x}\bar\rho(x_1)\right)\right)^2} \,\ud X^N\leq C,
\]
for some explicit universal constant $C$.

Combining \eqref{firstboundA}-\eqref{youngforA} with this bound yields the final estimate on $A$
\begin{equation}
\begin{split}
& A\leq \frac{\underline{\sigma}}{4\,N}\sum_{i=1}^N \int_{\Pi^{d\,N}} \rho_N\, |\nabla_{x_i} \log \frac{\rho_N}{\bar\rho_N}|^2\,\ud X^N\\
&+C\,d^3\,\frac{\|\bar\rho\|_{W^{1,\infty}}^2\,\|V\|_{L^\infty}^2}{\underline{\sigma}\,(\inf \bar\rho)^2}\, \left({\cal H}_N(\rho_N\,|\bar\rho_N)+\frac{1}{N}\right), \end{split}\label{finalboundA}
\end{equation}
again for some universal constant $C$. 

\bigskip

\noindent{\em The bound on $B$.} Define
\begin{equation}
\phi(x,z)=\left( V(x-z)-V\star_{x}\bar\rho(x)\right)\,:\,\frac{\nabla_{x}^2 \bar\rho(x)}{\bar\rho(x)}-\udiv L(x-z)+\udiv L\star_{x}\bar\rho(x),
\label{defphi1}
\end{equation}
so that
\[
\begin{split}
B&=\frac{1}{N^2}\sum_{i,j=1}^N \int_{\Pi^{d\,N}}\rho_N\,\bigg[\left( V(x_i-x_j)-V\star_{x}\bar\rho(x_i)\right)\,:\,\frac{\nabla_{x_i}^2 \bar\rho_N}{\bar\rho_N}\\
&\qquad\qquad-\udiv L(x_i-x_j)+\udiv L\star_{x}\bar\rho(x_i)\bigg]\,\ud X^N\\
&= \frac{1}{N^2}\sum_{i,j=1}^N \int_{\Pi^{d\,N}}\rho_N\, \phi(x_i,x_j)\,\ud X^N.
\end{split}
\]
Apply Lemma \ref{youngconvex} with
\[
\Phi=\frac{1}{N^2}\sum_{i,j=1}^N \eta\,\phi(x_i,x_j),
\]
so that
\begin{equation}
\begin{split}
B&\leq \frac{1}{\eta}\,{\cal H}_N(\rho_N\,|\;\bar\rho_N)
+\frac{1}{N\,\eta}\int_{\Pi^{d\,N}}\bar\rho_N\,e^{\frac{1}{N}\sum_{i,j} \eta\,\phi(x_i,x_j)}\,\ud X^N. 
\end{split} \label{firstboundB}
\end{equation}
Observe that
$\int_{\Pi^d} \phi(x,z)\,\bar\rho(z) \ud z=0$.
While by integration by parts
\[\begin{split}
&\int_{\Pi^d} \left( V(x-z)-V\star_{x}\bar\rho(x)\right)\,:\,\frac{\nabla_{x}^2 \bar\rho(x)}{\bar\rho(x)}\,\bar\rho(x)\ud x\\
&\qquad=\int_{\Pi^d} \left( \udiv L(x-z)-\udiv L\star_{x}\bar\rho(x)\right)\,\bar\rho(x) \ud x, 
\end{split}
\]
implying that $\int_{\Pi^d} \phi(x,z)\,\bar\rho(x)\, \ud x=0$. Note as well from \eqref{defphi1} that
\[
\|\sup_z |\phi(.,z)|\|_{L^p(\bar\rho\,dx)}\leq 2 \,\frac{\|V\|_{L^\infty}}{\inf \bar\rho}\, \|\nabla^2\bar\rho\|_{L^p}+2 \,\|\udiv L\|_{L^\infty}.
\]
Hence choosing
\[
\eta=\frac{1}{C\,\left(\frac{\|V\|_{L^\infty}}{\inf \bar\rho}\,\sup_p \frac{\|\nabla^2\bar\rho\|_{L^p}}{p}+\|\udiv L\|_{L^\infty}\right)},
\]
we may apply Theorem \ref{MEII} to bound
\[
\int_{\Pi^{d\,N}}\bar\rho_N\,e^{\frac{1}{N}\sum_{i,j} \eta\,\phi(x_i,x_j)}\,\ud X^N\leq C,
\]
for some universal constant $C$.
Hence from \eqref{firstboundB}, we conclude that
\begin{equation}
B\leq C\,\left(\frac{\|V\|_{L^\infty}}{\inf \bar\rho}\, \sup_p\frac{\|\nabla^2\bar\rho\|_{L^p}}{p}+\|\udiv L\|_{L^\infty}\right)\,\left({\cal H}_N(\rho_N\,|\;\bar\rho_N)+\frac{1}{N} \right).
\label{finalboundB}
\end{equation}

\bigskip

To finish the proof of the lemma, we simply have to add \eqref{finalboundA} and \eqref{finalboundB}, recalling that $\|V\|_{L^\infty} \leq 2 \|L\|_{\dot W^{-1,\infty}}$.
\end{proof}

\subsection{Bounding the interaction terms: The divergence term only in $\dot W^{-1,\infty}$  }
\begin{lemma}
Assume that $\bar\rho \in W^{1,p}(\Pi^{d})$ for any $p<\infty$, then for any kernel $L\in L^\infty(\Pi^d)$ with $\udiv L\in \dot W^{-1,\infty}$, one has that
\[\begin{split}
&-\frac{1}{N^2}\,\sum_{i,\;j=1}^N \int_{\Pi^{d\,N}}\rho_N\, \left( L(x_i-x_j)-L\star_{x}\bar\rho(x_i)\right)\cdot\nabla_{x_i} \log\bar\rho_N\,\ud X^N\\
&-\frac{1}{N^2}\,\sum_{i,\;j=1}^N \int_{\Pi^{d\,N}}\rho_N\, \left( \udiv L(x_i-x_j)-\udiv L\star_{x}\bar\rho(x_i)\right)\,\ud X^N\\
&\qquad\leq \frac{\underline{\sigma}}{4\,N}\sum_{i} \int_{\Pi^{d\,N}} \rho_N\, |\nabla_{x_i} \log \frac{\rho_N}{\bar\rho_N}|^2\,\ud X^N
+C\,M_L^2\, \left({\cal H}_N(\rho_N\,|\bar\rho_N)+\frac{1}{N}\right),
\end{split}
\]
where $C$ is a universal constant and
\[
M^2_L=  \left(\|L\|_{L^\infty}+ \|\udiv L\|_{\dot W^{-1,\infty}}\right)\,\frac{\|\nabla\bar\rho\|_{L^\infty}}{\inf \bar\rho} +  \frac{d }{\underline{\sigma}} \|\udiv L \|_{ \dot W^{-1, \infty}}^2.
\]
\label{interactionboundII}
\end{lemma}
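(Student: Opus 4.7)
The plan is to follow the architecture of Lemma \ref{interactionboundI} but with the roles of the two $L^\infty$ assumptions swapped. Denote the two terms on the left-hand side by $T_1$ (the one with $\nabla_{x_i}\log\bar\rho_N$) and $T_2$ (the divergence term). Because $L \in L^\infty$, there is no need to integrate $T_1$ by parts; instead it is $T_2$ that must be reshaped, since now $\udiv L$ is only in $\dot W^{-1,\infty}$. I would choose a vector field $W\in L^\infty(\Pi^d;\R^d)$ with $\udiv W = \udiv L$ and $\|W\|_{L^\infty}\leq 2\,\|\udiv L\|_{\dot W^{-1,\infty}}$, so that $\udiv L(x_i-x_j)-\udiv L\star\bar\rho(x_i) = \udiv_{x_i}[W(x_i-x_j) - W\star\bar\rho(x_i)]$. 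An integration by parts in $x_i$ gives
\[
T_2 = \frac{1}{N^2}\sum_{i,j=1}^N \int_{\Pi^{d\,N}} \nabla_{x_i}\rho_N \cdot \bigl(W(x_i-x_j)-W\star\bar\rho(x_i)\bigr)\,\ud X^N.
\]

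Next I would decompose $\nabla_{x_i}\rho_N = \bar\rho_N\nabla_{x_i}(\rho_N/\bar\rho_N) + \rho_N \nabla_{x_i}\log\bar\rho_N$, obtaining $T_2 = T_2^{(1)} + T_2^{(2)}$. The piece $T_2^{(1)}$ carrying $\nabla(\rho_N/\bar\rho_N)$ is exactly of the type of the term $A$ in the proof of Lemma \ref{interactionboundI}: Cauchy-Schwartz with weight $\underline\sigma/4$ extracts the Fisher information dissipation on the right-hand side, while the remaining square of $\tfrac{1}{N}\sum_j [W(x_i-x_j)-W\star\bar\rho(x_i)]$ is handled component-by-component via Lemma \ref{youngconvex} with $\eta\sim 1/\|W\|_{L^\infty}$ and Theorem \ref{MEI} (noting that $W(z-\cdot)-W\star\bar\rho(z)$ has vanishing $\bar\rho$-mean in its second argument). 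This step produces the contribution $\tfrac{d}{\underline\sigma}\|\udiv L\|_{\dot W^{-1,\infty}}^2$ to $M_L^2$.

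The remaining task is to handle $T_1 + T_2^{(2)}$ by a single application of Theorem \ref{MEII}. Setting $U := W - L\in L^\infty$ and using $\nabla_{x_i}\log\bar\rho_N = \nabla\log\bar\rho(x_i)$, the two contributions combine into
\[
T_1 + T_2^{(2)} = \frac{1}{N^2}\sum_{i,j=1}^N \int_{\Pi^{d\,N}} \rho_N\,\phi(x_i,x_j)\,\ud X^N, \qquad \phi(x,z):=\nabla\log\bar\rho(x)\cdot\bigl(U(x-z)-U\star\bar\rho(x)\bigr).
\]
One then checks that $\phi$ satisfies both cancellations of Theorem \ref{MEII}: the identity $\int \phi(x,z)\bar\rho(z)\,\ud z = 0$ is immediate from the definition of $U\star\bar\rho$, and the identity $\int \phi(x,z)\bar\rho(x)\,\ud x = 0$ follows from an integration by parts (using $\nabla\log\bar\rho\cdot\bar\rho = \nabla\bar\rho$), reducing to $-\int[\udiv U(x-z)-\udiv U\star\bar\rho(x)]\bar\rho(x)\,\ud x$, which vanishes because $\udiv U = \udiv W - \udiv L = 0$ by the very choice of $W$. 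With $\|\phi\|_{L^\infty}\lesssim \|\nabla\log\bar\rho\|_{L^\infty}(\|L\|_{L^\infty}+\|\udiv L\|_{\dot W^{-1,\infty}})$, Lemma \ref{youngconvex} combined with Theorem \ref{MEII} (choosing $\eta$ inversely proportional to $\|\phi\|_{L^\infty}$) yields exactly the first contribution to $M_L^2$.

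The main subtlety — and the only place where the argument is not already a direct transcription of Lemma \ref{interactionboundI} — is this $x$-cancellation in the combined $\phi$. It is precisely what forces $T_1$ and $T_2^{(2)}$ to be packaged together rather than estimated separately, and it hinges on the fact that $U = W-L$ is divergence-free, i.e.\ on having chosen $W$ with exactly the divergence of $L$. Without this algebraic closure, neither term separately would satisfy the hypotheses of Theorem \ref{MEII} and one would be unable to obtain a bound controlled by ${\cal H}_N + 1/N$. Assembling $T_1 + T_2^{(2)}$ and $T_2^{(1)}$, absorbing the $\underline\sigma/4$ dissipation term on the right-hand side, and using $\|\nabla\log\bar\rho\|_{L^\infty}\leq \|\nabla\bar\rho\|_{L^\infty}/\inf\bar\rho$ then delivers the stated estimate with $M_L^2$ as announced.
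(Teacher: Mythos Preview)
Your proposal is correct and follows essentially the same approach as the paper's proof: the paper also picks a field $\tilde L\in L^\infty$ with $\udiv\tilde L=\udiv L$ (your $W$), integrates the divergence term by parts, splits off the piece with $\nabla_{x_i}(\rho_N/\bar\rho_N)$ (the paper's term $A$, your $T_2^{(1)}$) to be handled by Cauchy--Schwarz and Theorem~\ref{MEI}, and combines the remaining piece with $T_1$ using $\bar L=\tilde L-L$ (your $U=W-L$) so that $\udiv\bar L=0$ and Theorem~\ref{MEII} applies. The only cosmetic difference is that the paper writes the integration by parts directly as $\bar\rho_N\,\nabla_{x_i}(\rho_N/\bar\rho_N)+\rho_N\,\nabla_{x_i}\log\bar\rho_N$ in one line rather than naming your decomposition of $\nabla_{x_i}\rho_N$, but the algebra and the identification of the crucial divergence-free combination are identical.
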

\begin{proof}
The proof follows similar ideas to the proof of Lemma \ref{interactionboundI} but now we have to integrate by parts the term with $\udiv L$ instead of the term with $L$. Denote $\tilde L\in L^\infty$ s.t. $\udiv \tilde L=\udiv L$ and $\|\udiv L\|_{\dot W^{-1,\infty}}=\|\tilde L\|_{L^\infty}$. Write
\[\begin{split}
&-\frac{1}{N^2}\,\sum_{i,\;j=1}^N \int_{\Pi^{d\,N}}\rho_N\, \left( \udiv L(x_i-x_j)-\udiv L\star_{x}\bar\rho(x_i)\right)\,\ud X^N\\
&\qquad=\frac{1}{N^2}\,\sum_{i,\;j=1}^N \int_{\Pi^{d\,N}}\nabla_{x_i}\frac{\rho_N}{\bar\rho_N}\cdot \left( \tilde L(x_i-x_j)-\tilde L\star_{x}\bar\rho(x_i)\right)\,\bar\rho_N\ud X^N\\
&\qquad\quad+\frac{1}{N^2}\,\sum_{i,\;j=1}^N \int_{\Pi^{d\,N}}\rho_N\, \left( \tilde L(x_i-x_j)-\tilde L\star_{x}\bar\rho(x_i)\right)\cdot\nabla_{x_i}\log \bar\rho_N\ud X^N.
\end{split}
\]
Hence 
\begin{equation}\begin{split}
&-\frac{1}{N^2}\,\sum_{i,\;j=1}^N \int_{\Pi^{d\,N}}\rho_N\, \left( L(x_i-x_j)-L\star_{x}\bar\rho(x_i)\right)\cdot\nabla_{x_i} \log\bar\rho_N\,\ud X^N\\
&-\frac{1}{N^2}\,\sum_{i,\;j=1}^N \int_{\Pi^{d\,N}}\rho_N\, \left( \udiv L(x_i-x_j)-\udiv L\star_{x}\bar\rho(x_i)\right)\,\ud X^N=A+B,\\
\end{split}\label{introAB}
\end{equation}
with
\[
A=\frac{1}{N^2}\,\sum_{i,\;j=1}^N \int_{\Pi^{d\,N}}\nabla_{x_i}\frac{\rho_N}{\bar\rho_N}\cdot \left( \tilde L(x_i-x_j)-\tilde L\star_{x}\bar\rho(x_i)\right)\,\bar\rho_N\ud X^N,
\]
and
\[
B=\frac{1}{N^2}\,\sum_{i,\;j=1}^N \int_{\Pi^{d\,N}}\rho_N\, \left( \bar L(x_i-x_j)-\bar L\star_{x}\bar\rho(x_i)\right)\cdot\nabla_{x_i} \log\bar\rho_N\,\ud X^N,
\]
for $\bar L=\tilde L-L$.

\medskip

\noindent{\em Bound for $A$.} We start with Cauchy-Schwartz to bound
\[\begin{split}
A\leq &\frac{\underline{\sigma}}{4\,N}\,\sum_{i=1}^N \int_{\Pi^{d\,N}}|\nabla_{x_i}\frac{\rho_N}{\bar\rho_N}|^2\,\frac{\bar\rho_N^2}{\rho_N}\\
&+\frac{1}{N\,\underline{\sigma}}\,\sum_{i=1}^N \sum_{\alpha =1}^d\int_{\Pi^{d\,N}}\rho_N\,\bigg|\frac{1}{N}\sum_{j=1}^N (\tilde L_\alpha(x_i-x_j)-\tilde L_\alpha\star_x\bar\rho(x_i)\bigg|^2\,\ud X^N,
\end{split}
\]
where $\tilde L_\alpha$ is the $\alpha$ coordinate of $\tilde L$.

Denote $\psi(z,x)=\eta\,(\tilde L_\alpha(z-x)-\tilde L_\alpha\star \bar\rho(z))$, and use Lemma \ref{youngconvex} for $\Phi=\left|\frac{1}{N}\sum_{j=1}^N \psi(x_i,x_j)\right|^2$ to obtain
\[\begin{split}
&\frac{1}{N}\,\sum_{i=1}^N \int_{\Pi^{d\,N}}\rho_N\,\bigg|\frac{1}{N}\sum_{j=1}^N (\tilde L_\alpha(x_i-x_j)-\tilde L_\alpha\star_x\bar\rho(x_i)\bigg|^2\,\ud X^N\\
&\quad\leq \frac{1}{\eta^2}\,{\cal H}_N(\rho_N\,|\;\bar\rho_N)+\frac{1}{N^2\,\eta^2}\,\sum_{i=1}^N  \log \int_{\Pi^{d\,N}} \bar\rho_N\,e^{\left|\frac{1}{N}\,\sum_{j} \psi(x_i,x_j)\right|^2}\,\ud X^N.
\end{split}
\]
Of course $\int_{\Pi^d} \psi(z,x)\,\bar\rho(x)\,dx=0$ so that taking
\[
\eta=\frac{1}{4\,e\,\|\tilde L\|_{L^\infty}}=\frac{1}{4\,e\,\|\udiv L\|_{\dot W^{-1,\infty}}},
\]
and applying Theorem \ref{MEI}, we find
\begin{equation}\begin{split}
A\leq &\frac{\underline{\sigma}}{4\,N}\,\sum_{i=1}^N \int_{\Pi^{d\,N}} \rho_N |\nabla_{x_i}\log\frac{\rho_N}{\bar\rho_N}|^2 \ud X^N \\
&+C\,d\,\frac{\|\udiv L\|_{\dot W^{-1,\infty}}^2}{\underline\sigma}\,\left({\cal H}_N(\rho_N\,|\;\bar\rho_N)+\frac{1}{N}\right).\label{boundAII}
\end{split}\end{equation}

\medskip

\noindent{\em Bound for $B$.} We follow the same steps as before, define
\[
\phi(x,z)=\left( \bar L(x-z)-\bar L\star_{x}\bar\rho(x)\right)\cdot\nabla_{x} \log\bar\rho(x),
\]
and first apply Lemma \ref{youngconvex} with 
$\Phi=\frac{\eta}{N^2}\sum_{i,j=1}^N \phi(x_i,x_j)$
to find
\[
B\leq \frac{1}{\eta}{\cal H}_N(\rho_N\,|\;\bar\rho_N)+\frac{1}{N\,\eta}\  \log  \int_{\Pi^{d\,N}} \bar\rho_N\,e^{\frac{1}{N}\,\sum_{i,j} \phi(x_i,x_j)}\,\ud X^N.
\]
Since $\udiv \bar L=\udiv \tilde L-\udiv L=0$, we have that 
\[
\int_{\Pi^d} \phi(x,z)\,\bar\rho(z)\,dz=\int_{\Pi^d} \phi(x,z)\,\bar\rho(x)\,dx=0.
\]
Choose
\[
\eta=\frac{1}{C\,\|\bar L\|_{L^\infty}\,\sup_p \frac{\|\nabla \log \bar\rho\|_{L^p(\bar \rho \ud x)}}{p}}=\frac{\inf\bar\rho}{C\,(\|L\|_{L^\infty}+\|\udiv L\|_{\dot W^{-1,\infty}})\,\|\nabla\bar\rho\|_{L^\infty}},
\]
and apply now Theorem \ref{MEII} to conclude that
\begin{equation}
B\leq C\,(\|L\|_{L^\infty}+\|\udiv L\|_{\dot W^{-1,\infty}})\,\frac{\|\nabla\bar\rho\|_{L^\infty}}{\inf \bar\rho}\;\left({\cal H}_N(\rho_N\,|\;\bar\rho_N)+\frac{1}{N}\right).\label{boundBII}
\end{equation}

\medskip

Combining \eqref{boundAII} and \eqref{boundBII} concludes the proof.
\end{proof}
\subsection{Conclusion of the proof of Theorem \ref{maintheorem}}
The proof of Theorem \ref{maintheorem} follows from the previous estimates through a careful decomposition of the kernel $K$.

By the assumption of Theorem \ref{maintheorem}, we have that $K_\alpha=\partial_\beta  V_{\alpha\beta}$ where $V\in L^\infty(\Pi^d)$ is a matrix field, and  that  there exists $\tilde K\in L^\infty$ s.t. $\udiv K=\udiv \tilde K$ and $\|\tilde K\|_{L^\infty(\Pi^d)}\leq 2\,\|\udiv K\|_{\dot W^{-1,\infty}}$. For convenience, we use the notation
\[
\|K\|=\|\tilde K\|_{L^\infty(\Pi^d)}+\|V\|_{L^\infty(\Pi^d)}\leq 2\,\|\udiv K\|_{\dot W^{-1,\infty}}+2\,\|K\|_{\dot W^{-1,\infty}}.
\]

Define $\bar K=\udiv V-\tilde K$. Note that $\udiv \bar K=0$ and obviously since $\tilde K\in L^\infty$ and we can choose $\tilde K$ s.t. $\int \tilde K=0$, then $\bar K\in \dot W^{-1,\infty}$ with $\| \bar K\|_{\dot W^{-1,\infty}}\leq C_d\,\|K\|$. 

We combine Lemma \ref{entropytimeevolution} with Lemma \ref{interactionboundI} for $L=\bar K$, and finally with Lemma \ref{interactionboundII} for $L=\tilde K$. We obtain
\begin{equation}
\begin{split}
 & {\cal H}_N(\rho_N\,|\;\bar\rho_N)(t)\leq {\cal H}_N(\rho_N^0\,|\;\bar\rho_N^0)+C_1\,t\,|\sigma-\sigma_N|
\\
&\qquad+C \,\int_0^t\left(M_{\bar K}^1+M_{\tilde K}^2\right)\,\left( {\cal H}_N(\rho_N\,|\;\bar\rho_N)(s)+\frac{1}{N}\right)\,ds.
\end{split}\label{finalentropyboundI}
\end{equation}
With our specific bounds
\[\begin{split}
& M_{\bar K}^1\leq d^3\,\|\bar K\|^2_{\dot W^{-1,\infty}}\,\frac{\|\bar\rho\|_{W^{1,\infty}}^2}{\underline{\sigma}\,(\inf \bar\rho)^2}+\frac{\|\bar K\|_{\dot W^{-1,\infty}}}{\inf \bar\rho}\,\sup_p \frac{\|\nabla^2 \bar\rho\|_{L^p}}{p},\\
& M_{\tilde K}^2\leq \left( \|\tilde K\|_{L^\infty} + \|\udiv \tilde K \|_{\dot W^{-1, \infty}}  \right) \frac{\|\nabla\bar\rho\|_{L^\infty}}{\inf \bar\rho} + \frac{d}{\underline{\sigma}} \| \udiv \tilde{K}\|_{\dot W^{-1, \infty}}^2. 
\end{split}
\]
To keep calculations simple, we do not try here to obtain fully explicit bounds (which would still be possible) and simplify \eqref{finalentropyboundI} in
\begin{equation}
\begin{split}
 & {\cal H}_N(\rho_N\,|\;\bar\rho_N)(t)\leq {\cal H}_N(\rho_N^0\,|\;\bar\rho_N^0)+\bar M\,(1+t\,(1+\|K\|^2))\,|\sigma-\sigma_N|\\
&+
\bar M\,(\|K\|+\|K\|^2)\,\int_0^t \left( {\cal H}_N(\rho_N\,|\;\bar\rho_N)(s)+\frac{1}{N}\right),
\end{split}\label{finalentropyboundII}
\end{equation} 
where we only kept explicit a simplified dependence on $K$ and where the constant $\bar M$ depends only on
\[\begin{split}
\bar M\bigg(&d,\;\underline{\sigma},\;\inf \bar\rho,\;\|\bar\rho\|_{W^{1,\infty}},\;\sup_p \frac{\|\nabla^2 \bar\rho\|_{L^p}}{p},\;\frac{1}{N}\int_{\Pi^{d\,N}} \rho_N^0\,\log \rho_N^0,\;\|\udiv F\|_{L^\infty}\Bigg).\\
\end{split}
\]
By Gronwall lemma, \eqref{finalentropyboundII} implies that
\[\begin{split}
 {\cal H}_N(\rho_N\,|\;\bar\rho_N)(t)\leq &e^{\bar M\,(\|K\|+\|K\|^2)\,t}\,
\bigg({\cal H}_N(\rho_N^0\,|\;\bar\rho_N^0)+\frac{1}{N}\\
&\qquad+\bar M(1+t\,(1+\|K\|^2))\,|\sigma-\sigma_N| \bigg),
\end{split}
\]
which concludes the proof of Theorem \ref{maintheorem}.
\subsection{Proof of Theorem \ref{thmvanishingviscosity}} \label{ProofTh2}
The proof of our result for vanishing viscosity is in fact now straightforward as it uses our previous analysis.

First of all, we have an direct equivalent of Lemma \ref{entropytimeevolution} 
\begin{equation}\begin{split}
&{\cal H}_N(\rho_N\,|\;\bar\rho_N)(t) = \frac{1}{N}\int_{\Pi^{d\,N}}\, \rho_N(t,X^N)\,\log  \frac{\rho_N(t,X^N)}{\bar\rho_N(t,X^N)}\,\ud X^N\leq {\cal H}_N(\rho_N^0\,|\;\bar\rho_N^0)\\
& -\frac{1}{N^2}\,\sum_{i,\;j=1}^N \int_0^t\int_{\Pi^{d\,N}}\rho_N\, \left( K(x_i-x_j)-K\star_{x}\bar\rho(x_i)\right)\cdot\nabla_{x_i} \log\bar\rho_N\,\ud X^N\,\ud s\\
&-\frac{1}{N^2}\,\sum_{i,\;j=1}^N \int_0^t\int_{\Pi^{d\,N}}\rho_N\,\left(\udiv\,K(x_i-x_j) - \udiv K\star_x \bar\rho(x_i)\right)\,\ud X^N\,\ud s \, + \alpha_N \\
\end{split}\label{timeevolution2}
\end{equation}
where when  $\sigma_N \to \sigma=0$, 
\[
\alpha_N = \frac{\sigma_N}{4 N} \sum_{i=1}^N \int_0^t \int_{\Pi^{d \, N}}  \rho_N |\nabla_{x_i} \log \bar  \rho(x_i)|^2 \ud X^N \ud s \leq t |\sigma -\sigma_N| \, \| \log \bar \rho \|_{W^{1, \infty}}^2,  
\]
while when $\sigma_N \to \sigma >0$ as $N \to \infty$, we can take $\underline{\sigma} = \sigma/2$ as in Lemma \ref{entropytimeevolution} but use the entropy bound \eqref{entropybounds2} which gives 
\[
\alpha_N = C_2\,t\, |\sigma-\sigma_N|  
\]
with $C_2$ given by
\[
C_2=\frac{2}{ \sigma N\,t}\, \int_{\Pi^{d\,N}} \rho_N^0\,\log\rho_N^0
+ \frac{2}{\sigma} \|\udiv K\|_{L^\infty}+ \frac{2}{\sigma}\|\udiv F\|_{L^\infty} + 2 \| \log \bar \rho \|_{W^{1, \infty}}^2. 
\]

\smallskip
There is no need for any integration by part on the other terms in \eqref{timeevolution2}. When  $K \in L^\infty$ and $ \udiv K \in L^\infty$, one simply denotes for some $\eta>0$
\begin{equation}\label{DefPhiBoundedK}
\frac{1}{\eta}\,\phi(x,z)=- \left(K(x-z)-K\star_x \bar\rho(x)\right)\cdot \nabla_x \log \bar\rho (x) -(\udiv K(x-z) - \udiv K\star_x\bar\rho(x)).
\end{equation}

But for  more singular kernels $K$ with $K(-x) = -K(x)$, $|x|K(x) \in L^\infty$ and $\udiv K \in L^\infty$, we have  to do a symmetrization first as in  \cite{Delort}. Indeed, for any $\eta >0$, 
\[
\begin{split}
 \quad \frac{1}{\eta} \phi(x, z) =  - \frac{1}{2} \Big\{  & \left(K(x-z)-K\star_x \bar\rho(x)\right)\cdot \nabla_x \log \bar\rho (x) \\
& +\udiv K(x-z)-\udiv K\star_x\bar\rho(x) \\
& + \left(K(z-x)-K\star_x \bar\rho(z)\right)\cdot \nabla_z \log \bar\rho (z) \\
& +\udiv K(z-x)-\udiv K\star_x\bar\rho(z) \Big\}. 
\end{split}
\] 
Using $K(-x) = -K(x)$, we then obtain
\begin{equation}\label{DefPhiSingularK}
\begin{split}
 - \frac{2}{\eta} \phi(x, z)  & = K(x-z) \cdot \left( \nabla_x \log \bar \rho(x) - \nabla_z \log \bar \rho(z) \right) \\
& - K \star_x \bar \rho(x) \cdot \nabla_x \log \bar \rho(x) - K \star_x \bar \rho(z) \cdot \nabla_z \log \bar \rho(z) \\
& + \udiv K(x-z) - \udiv K \star_x \bar \rho(x)   \\
& + \udiv K(z- x) - \udiv K \star_x \bar \rho(z). 
\end{split}
\end{equation}

We then directly apply Lemma \ref{youngconvex} to
\[
\Phi=\frac{1}{N^2}\,\sum_{i,j =1}^N  \phi(x_i, x_j),
\]
and find
\[
\begin{split}
& -\frac{1}{N^2}\,\sum_{i,\;j=1}^N \int_0^t\int_{\Pi^{d\,N}}\rho_N\, \left( K(x_i-x_j)-K\star_{x}\bar\rho(x_i)\right)\cdot\nabla_{x_i} \log\bar\rho_N\,\ud X^N\,\ud s\\
&-\frac{1}{N^2}\,\sum_{i,\;j=1}^N \int_0^t\int_{\Pi^{d\,N}}\rho_N\,\left(\udiv\,K(x_i-x_j) - \udiv K\star_x \bar\rho(x_i)\right)\,\ud X^N\,\ud s\\
&\quad\leq \frac{1}{\eta}\,{\cal H}_N(\rho_N\,|\;\bar\rho_N)+\frac{1}{ \eta N}\,\log \int_{\Pi^{d\,N}} \bar\rho_N\,\exp\bigg(\frac{1}{N}\sum_{i,j=1}^N   \, \phi(x_i, x_j)\bigg)\, \ud X^N.
  \end{split}
\]
We use Theorem  \ref{MEII} and observe if $K\in L^\infty$ then one directly has that
\[\begin{split}
\gamma&=C\,\left(\sup_{p\geq 1}\frac{\|\sup_z |\phi(.,z)|\,\|_{L^p(\bar\rho\,dx)}}{p} \right)^2\\
&\leq C\,\eta^2\,\|K\|_{\infty}^2\,\left( 1+ \sup_{p \geq 1} \,\frac{\|\nabla\log \bar\rho\|_{L^p(\bar\rho\,dx)}}{p}   \right)^2<1,
\end{split}
\]
provided that one chooses
\[
\eta <  \frac{1}{C\,\|K\|_\infty\, \left( 1+ \sup_p\,\frac{\|\nabla\log \bar\rho\|_{L^p(\bar\rho\,dx)}}{p}  \right)},
\]
and where we recall that
\[
\|K\|_\infty=\|K\|_{L^\infty}+\|\udiv K\|_{L^\infty}.
\]

If $K(x)=-K(-x)$ with $|x|\,K\in L^\infty$, one now has to be careful in estimating
\[
\sup_{p \geq 1}  \frac{\|\sup_z |\phi(.,z)|\,\|_{L^p(\bar \rho\,dx)}}{p},
\]
as $\phi$ is now symmetric in $x$ and $z$.

First we recall the well known estimate, of which we give a short proof at the end of the subsection
\begin{lemma}
  For any function $f$ in $L^q$ with $q>d$, one has that for any $x,\;z$
  \[
|f(x)-f(z)|\leq C_d \,|x-z|\,(M\,|\nabla f|^q (x))^{1/q},
\]
where $M$ is the maximal operator.\label{rademacher}
\end{lemma}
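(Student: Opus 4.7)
The plan is a short Riesz-potential plus Hölder estimate, in the spirit of Morrey / Hedberg. By density it suffices to prove the bound for smooth $f$. Set $r = |x-z|$ and $B = B(x, 2r)$, so that $z \in B$. I will use the standard pointwise Poincaré--Riesz inequality: for any smooth $f$ and any $w \in B$,
\[
|f(w) - f_B| \leq C_d \int_B \frac{|\nabla f(y)|}{|w-y|^{d-1}}\, dy,
\]
which is derived in a few lines from $f(w) - f(y) = -\int_0^1 \nabla f(w + t(y-w))\cdot (y-w)\,dt$, averaging in $y$ over $B$, switching to the variable $u = w + t(y-w)$ (which stays in $B$ since $B$ is convex and $w \in B$), and interchanging the order of integration in $t$ and $u$. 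Applying this with $w=x$ and $w=z$ and the triangle inequality bounds $|f(x)-f(z)|$ by the sum of two such Riesz integrals.

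Each of these integrals I control by Hölder's inequality with exponents $q$ and $q' = q/(q-1)$,
\[
\int_B \frac{|\nabla f(y)|}{|w-y|^{d-1}}\,dy \leq \Big(\int_B |\nabla f|^q\Big)^{1/q} \Big(\int_B |w-y|^{-(d-1)q'}\,dy\Big)^{1/q'}.
\]
The condition $q>d$ is exactly what makes the kernel $|w-y|^{-(d-1)q'}$ locally integrable, and the second factor is $\leq C r^{1 - d/q}$ uniformly in $w \in B$ (using $\int_B |w-y|^{-(d-1)q'}\,dy \leq \int_{B(w, 4r)} |w-y|^{-(d-1)q'}\,dy \leq C r^{d - (d-1)q'}$, which is valid for both $w = x$ and $w = z$ since $|x-z|=r$ already forces $B \subset B(z, 4r)$). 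The decisive point, and the reason only $M|\nabla f|^q(x)$ appears in the conclusion, is that $B$ is centered at $x$, so that
\[
\int_B |\nabla f|^q \leq |B| \cdot M|\nabla f|^q(x) \leq C r^d \, M|\nabla f|^q(x),
\]
irrespective of whether we subsequently apply the inequality with $w=x$ or $w=z$. Combining gives each integral $\leq C r (M|\nabla f|^q(x))^{1/q}$, which is the desired bound after summing the two.

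I do not foresee any serious obstacle. The only point requiring care is the asymmetric placement of the maximal function (at $x$ rather than at $z$ or symmetrized), which is handled entirely by centering the auxiliary ball $B$ at $x$. The non-smooth case follows by mollification and Fatou's lemma, using that both sides are lower semi-continuous under the standard convergence of mollified functions.
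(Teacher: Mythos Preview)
Your proof is correct and follows essentially the same approach as the paper: both reduce to a pair of Riesz-potential integrals over a ball of radius $\sim |x-z|$ centered at $x$, then apply H\"older with exponents $q$ and $q'$ (using $q>d$ to integrate the kernel) and bound $\int_B |\nabla f|^q$ by $|B|\,M|\nabla f|^q(x)$. The only cosmetic difference is that you derive the Riesz-potential bound via the mean $f_B$ and the segment formula, whereas the paper quotes it as the classical arc-averaging inequality; the remaining steps are identical.
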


By Lemma \ref{rademacher}, for some $q>d$,  two  terms in  \eqref{DefPhiSingularK} can be estimated as 
\[
\begin{split}
& |K(x-z) \cdot \left( \nabla_x \log \bar \rho(x) - \nabla_z \log \bar \rho(z) \right) | \\
& \leq  C_d \||x| K(x)\|_{L^\infty} (M\, |\nabla_x^2 \log \bar \rho|^q(x))^{1/q},
\end{split} 
\]
and  
\[
\begin{split}
& \left|  K \star_x \bar \rho(x) \cdot \nabla_x \log \bar \rho(x)  +  K \star_x \bar \rho(z) \cdot \nabla_z \log \bar \rho(z)  \right| \\
& \leq  \left|   K \star_x \bar \rho(x) + K \star_x \bar \rho(z)   \right| |\nabla_x \log \bar \rho(x) | \\ & + \left|K \star_x \bar \rho(z)  \right| \left| \nabla_x \log \bar \rho(x) - \nabla_z \log \bar \rho(z)  \right| \\
& \leq C_d\,  \| |x| K(x)  \|_{L^\infty}  \| \bar \rho  \|_{L^\infty} \left( |\nabla_x \log \bar \rho(x)|  +  (M\,|\nabla^2_x  \log \bar \rho|^q (x) )^{1/q} \right). 
\end{split}
\]
Combining with the trivial estimates for terms involving $\udiv K$, i.e. 
\[
|\udiv K(x-z) - \udiv K \star_x \bar \rho(x)|,  \, |\udiv K(z- x) - \udiv K \star_x \bar \rho(z)| \leq 2 \|\udiv K \|_{L^\infty}, 
\]
we finally obtain that  
\[
\begin{split}
\sup_z |\phi(x,z)| & \leq \eta \,  C_d \,\|K\|_\infty\, \bigg(1+\|\bar\rho\|_{L^\infty} |\nabla_x \log \bar \rho(x) | \\
& \quad + (1 + \|\bar \rho \|_{L^\infty}) (M\,|\nabla^2_x \log \bar \rho|^q (x) )^{1/q} \bigg),\\
\end{split}
\]
 where we recall that now 
\[
\| K\|_{\infty} =  \|\,|x| K(x) \|_{L^\infty} + \|\udiv K \|_{L^\infty}. 
\]
Assuming that $\bar\rho\in A_p$ is a Muckenhoupt weight  then for $p>q$
\[
\left\|(M\,|\nabla^2 \log \bar \rho|^q)^{1/q}\right\|_{L^p(\bar\rho\,dx)}\leq \|\bar\rho\|_{A_{p}}\,\left\|\nabla^2 \log \bar \rho\right\|_{L^p(\bar\rho\,dx)},
\]
where we write, through a slight abuse of notation
\[
\|\bar\rho\|_{A_p}=\sup_{B\ ball} \frac{1}{|B|}\int_B \bar\rho(x)\,dx\, \left(\frac{1}{|B|}\int_B \bar\rho(x)^{-p^*/p}\,dx\right)^{p/p^*}.
\]
But in addition $A_r\subset A_p$ if $r\geq p$ while we only need to work with large $p$. On the other hand (see Chapter 5 in \cite{Stein} for example), if $\log\bar\rho\in BMO$ then $\bar\rho\in A_p$ for some $p$.  

Thus we
finally find, similarly to the previous case, that
\[
\begin{split}
  \gamma \leq  & C\, \eta^2\,\|K \|_{\infty}^2  \cdot \left(1 + \| \bar \rho\|_{L^\infty} \right)^2  \bigg( 1  + \sup_{p \geq 1} \frac{\|\nabla \log \bar \rho \|_{L^p(\bar \rho \ud x )}}{p } \\ & +C(\|\log\bar\rho\|_{BMO})\,\sup_{p \geq 1}  \,\frac{\|\nabla^2 \log \bar\rho\|_{L^p(\bar\rho\,dx)}}{p}  \bigg)^2 <1, 
\end{split}
\]
provided again that $\eta$ is chosen small enough,
and by  Theorem  \ref{MEII}, we hence have 
\[
\begin{split}
& -\frac{1}{N^2}\,\sum_{i,\;j=1}^N \int_0^t\int_{\Pi^{d\,N}}\rho_N\, \left( K(x_i-x_j)-K\star_{x}\bar\rho(x_i)\right)\cdot\nabla_{x_i} \log\bar\rho_N\,\ud X^N\,\ud s\\
&-\frac{1}{N^2}\,\sum_{i,\;j=1}^N \int_0^t\int_{\Pi^{d\,N}}\rho_N\,\left(\udiv\,K(x_i-x_j) - \udiv K\star_x \bar\rho(x_i)\right)\,\ud X^N\,\ud s\\
&\quad\leq \bar M_2\,\|K\|_\infty\,  \int_0^t \left( {\cal H}_N(\rho_N\,|\;\bar\rho_N)(s)  +\frac{1}{N} \right) \ud s .
  \end{split}
\]
Inserting this in \eqref{timeevolution2}, we find that
\[\begin{split}
    {\cal H}_N(\rho_N\,|\;\bar\rho_N)\leq &{\cal H}_N(\rho_N^0\,|\;\bar\rho_N^0)+\bar M_2\,\|K\|_\infty\, \int_0^t \left( {\cal H}_N(\rho_N\,|\;\bar\rho_N)(s) +\frac{1}{N} \right) \ud s  \\
    &+\bar M_2 \, \left( 1+  t \| K\|_{\infty} \right) \,|\sigma-\sigma_N| 
\end{split}
\]
for some constant $\bar M_2$ depending only on
\[\begin{split}
&\bar M_2\bigg(\sigma, \;   \|\bar\rho\|_{L^\infty},\;  \| \log \bar \rho\|_{BMO}, \;    \sup_{p \geq 1}\frac{\|\nabla \log \bar\rho\|_{L^p(\bar\rho\,dx)}}{p},\;   \sup_{p \geq 1}\frac{\|\nabla^2  \log \bar\rho\|_{L^p(\bar\rho\,dx)}}{p},  \\ & \quad \quad  \;  \| \log \bar \rho\|_{W^{1, \infty}},  \; 
\frac{1}{N} \int_{\Pi^{d \, N}} \rho_N^0 \log \rho_N^0,\;  \|\udiv F\|_{L^\infty}\bigg).
\end{split}
\]
This concludes the proof by Gronwall lemma. 

\bigskip

\begin{proof}[Proof of Lemma \ref{rademacher}]
Note that this estimate is also connected to the classical Rademacher theorem for a.e. differentiability of functions in $W^{1,q}$ for $q>d$.
  
  First we recall the very classical (see again \cite{Stein} for example)
  \[
|f(x)-f(z)|\leq C_d\,\int_{|y-x|\leq 2\,|x-z|} |\nabla f(y)|\,\left(\frac{dy}{|y-x|^{d-1}}+\frac{dy}{|z-y|^{d-1}}\right),
\]
which can be simply derived by integrating $|\nabla f|$ over arcs if circles between $x$ and $z$ and averaging over all such arcs that belong to the ball $|y-x|\leq 2\,|x-z|$.

Now we can just observe that for $q>d$
\[
\begin{split}
  &\int_{|y-x|\leq 2\,|x-z|} |\nabla f(y)|\,\frac{dy}{|z-y|^{d-1}}\leq \left(\int_{|y-x|\leq 2\,|x-z|} |\nabla f(y)|^q\,dy\right)^{1/q}\\
  &\qquad\qquad \left(\int_{|y-x|\leq 2\,|x-z|} \frac{dy}{|z-y|^{(d-1)\,q/(q-1)}}\right)^{(q-1)/q}\\
  &\qquad \leq C_d\,|x-z|^{1-d/q}\,\left(\int_{|y-x|\leq 2\,|x-z|} |\nabla f(y)|^q\,dy\right)^{1/q}\\
  &\qquad\leq C_d\,|x-z|\,(M|\nabla f|^q(x))^{1/q}.
  \end{split}
\]
The other term may be bounded in the same manner (and is in fact better as it could be controlled by $M|\nabla f|(x)$ directly), thus concluding the proof. 
  \end{proof}
\section{Preliminary of combinatorics} \label{PreCombi}
Before the proof of the main estimates Theorem \ref{MEI} and  Theorem \ref{MEII}, we list some useful combinatorics results used throughout this article.
We first recall Stirling's formula
\begin{equation}
n! = \lambda_n \sqrt{2 \pi n} \left( \frac{n}{e}\right)^n,  \label{stirling}
\end{equation}
where $1 < \lambda_n  < \frac{11}{10}$ and $\lambda_n \to 1$ as $n \to \infty$.

We have the elementary bound following from \eqref{stirling}
\begin{lemma}\label{BioCoeBouLem} For any $1 \leq p \leq q$, one has 
\[
\binom{q}{p} \leq e^p q^p p^{-p}. 
\]
\end{lemma}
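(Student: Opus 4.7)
The plan is to combine the elementary upper bound on the binomial coefficient $\binom{q}{p}$ with the Stirling lower bound on $p!$ provided by \eqref{stirling}.

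First I would start from the definition
\[
\binom{q}{p} = \frac{q(q-1)\cdots(q-p+1)}{p!},
\]
and use the trivial bound $q(q-1)\cdots(q-p+1) \leq q^p$ (since each of the $p$ factors in the numerator is at most $q$), giving $\binom{q}{p} \leq q^p/p!$.

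Then I would invoke Stirling's formula \eqref{stirling}, which in particular yields the lower bound $p! = \lambda_p \sqrt{2\pi p}\,(p/e)^p \geq (p/e)^p$ since $\lambda_p > 1$ and $\sqrt{2\pi p} \geq 1$ for all $p \geq 1$. Substituting this into the previous inequality gives
\[
\binom{q}{p} \leq \frac{q^p}{(p/e)^p} = e^p\, q^p\, p^{-p},
\]
which is the claimed estimate.

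The proof is entirely routine, so there is no real obstacle to overcome: the only point to verify is that $\sqrt{2\pi p}\,\lambda_p \geq 1$ for every integer $p \geq 1$, which is immediate from $\lambda_p > 1$ and $2\pi \geq 1$. No further cancellation or case analysis is needed.
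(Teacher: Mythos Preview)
Your proof is correct and follows exactly the approach the paper indicates: the paper simply states that the bound follows from Stirling's formula \eqref{stirling}, and your argument---bounding the numerator by $q^p$ and the denominator $p!$ from below by $(p/e)^p$ via $\lambda_p\sqrt{2\pi p}\geq 1$---is precisely the intended derivation.
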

One also has the basic combinatorics on $p$-tuples
\begin{lemma}\label{Diophantine} For any $1 \leq p \leq q$,  one has 
\[
|\{(b_1,\ldots,b_p)\in \N^p\, \vert\; \forall l\ 1\leq b_l\leq q\  \mbox{and}\ b_1 + b_2 + \cdots + b_p =q \} | = \binom{q-1}{p-1}. 
\]
\end{lemma}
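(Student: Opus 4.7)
The plan is to prove this by the classical stars-and-bars argument, recognizing that the set being counted is simply the set of compositions of $q$ into $p$ positive parts.

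First I would observe that the upper bound $b_l \leq q$ in the statement is automatic and hence redundant: if $b_1 + \cdots + b_p = q$ with each $b_l \geq 1$, then $b_l = q - \sum_{k \neq l} b_k \leq q - (p-1) \leq q$. So we only need to count $p$-tuples of positive integers summing to $q$.

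Next I would reduce to a count of non-negative solutions via the change of variables $c_l = b_l - 1$. Then the constraints $b_l \geq 1$ and $\sum b_l = q$ become $c_l \geq 0$ and $\sum c_l = q - p$. The plan is therefore to show
\[
|\{(c_1, \ldots, c_p) \in \mathbb{N}_0^p \ \vert \ c_1 + \cdots + c_p = q-p\}| = \binom{q-1}{p-1}.
\]
This is standard stars-and-bars: represent each such tuple by a sequence of $q-p$ stars and $p-1$ bars (a total of $q-1$ symbols), where the bars partition the stars into $p$ (possibly empty) blocks whose sizes are $c_1, \ldots, c_p$. This correspondence is a bijection between our tuples and the $\binom{q-1}{p-1}$ choices of positions for the $p-1$ bars among the $q-1$ symbol positions, which yields the claim.

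No significant obstacle is anticipated here; the result is entirely elementary and is stated only as a convenient reference for the combinatorial arguments of Sections \ref{proofMEI} and \ref{ProofMEII}. If one prefers a more direct route, one can alternatively argue by bijection with strictly increasing sequences: tuples $(b_1, \ldots, b_p)$ of positive integers with $\sum b_l = q$ correspond to sequences $0 < s_1 < s_2 < \cdots < s_{p-1} < q$ via partial sums $s_k = b_1 + \cdots + b_k$, and the number of such sequences is exactly $\binom{q-1}{p-1}$.
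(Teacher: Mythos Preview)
Your proof is correct. In fact, the alternative route you mention at the end---the bijection with strictly increasing sequences $0<s_1<\cdots<s_{p-1}<q$ via partial sums---is exactly the argument the paper gives; your main stars-and-bars argument is an equally standard and equally short alternative.
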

\begin{proof}[Proof of Lemma \ref{Diophantine}] 
When $p=1$, the lemma trivially  holds true with the convention $\binom{0}{0}=1$  if $p=q=1$. We thus assume  $p \geq 2 $ in the following.   Since each $p-$tuple $(b_1, b_2, \cdots, b_p)$ uniquely determines a $(p-1)-$tuple $(c_1, c_2, \cdots, c_{p-1})$ and reciprocally via 
 \[
 c_1=b_1, c_2= b_1 +b_2, \cdots, c_{p-1}= b_1 + b_2 + \cdots + b_{p-1},
 \]
it suffices to verify that 
\[
|\{ (c_1, c_2, \cdots, c_{p-1})\, \vert\; 1 \leq c_1 < c_2 < \cdots < c_{p-1} \leq q-1\}| = \binom{q-1}{p-1}.
\]
 This is simply obtained by choosing $p-1$ distinct integers from the set $\{1, 2,\cdots, q-1\}$ and assigning the smallest one to $c_1$, the second smallest to $c_2$, and so on. 
\end{proof}

Much of the combinatorics that we handle is based only on the multiplicity in the multi-indices. It is therefore convenient to know how many multi-indices can have the same multiplicity signature
\begin{lemma} For any $a_1,\ldots,a_q\in \N$ s.t. $a_1+\cdots+a_q=p$, then the set of multi-indices $I_p=(i_1,\ldots,i_p)$ with $1\leq i_k\leq q$ and corresponding multiplicities has cardinal 
\[
\bigg|\Big\{(i_1,\ldots,i_p)\in \{1,\ldots,q\}^p\,|\; \forall l\ a_l=|\{k\,,\;i_k=l\}|\Big\}\bigg|=\frac{p!}{a_1!\cdots a_q!}.
\] 
\label{multiplicity}
\end{lemma}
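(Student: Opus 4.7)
This is a standard multinomial counting identity, so the plan is to give the classical sequential choice argument. I would view an element $(i_1, \ldots, i_p) \in \{1, \ldots, q\}^p$ with prescribed multiplicities $a_1, \ldots, a_q$ as a way of partitioning the set of positions $\{1, \ldots, p\}$ into $q$ ordered blocks $B_1, \ldots, B_q$, where $B_l = \{k : i_k = l\}$ has cardinal exactly $a_l$. The map $(i_1, \ldots, i_p) \mapsto (B_1, \ldots, B_q)$ is clearly a bijection between the set we want to count and the set of ordered partitions of $\{1, \ldots, p\}$ into blocks of the prescribed sizes.

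To count the latter, I would pick the blocks one at a time. The block $B_1$ can be chosen as any $a_1$-subset of $\{1, \ldots, p\}$, giving $\binom{p}{a_1}$ possibilities. Given $B_1$, the block $B_2$ is any $a_2$-subset of the remaining $p - a_1$ positions, contributing $\binom{p - a_1}{a_2}$. Iterating and using $a_1 + \cdots + a_q = p$, the total count is
\begin{equation*}
\prod_{l=1}^{q} \binom{p - a_1 - \cdots - a_{l-1}}{a_l} = \prod_{l=1}^{q} \frac{(p - a_1 - \cdots - a_{l-1})!}{a_l! \, (p - a_1 - \cdots - a_l)!} = \frac{p!}{a_1! \cdots a_q!},
\end{equation*}
after the factorials telescope.

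There is no real obstacle here; the statement is purely combinatorial and reduces to the definition of the multinomial coefficient. The only point that warrants care is the boundary convention $0! = 1$ so that indices $l$ with $a_l = 0$ contribute a factor of $1$ to the product and do not affect the running count of remaining positions, which is consistent with $B_l = \emptyset$ being the unique choice in that case.
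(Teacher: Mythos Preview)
Your proof is correct and follows essentially the same approach as the paper: the paper simply states that this is the basic multinomial relation, obtained by choosing the $a_1$ positions for the value $1$ among $p$ positions, then the $a_2$ positions for $2$ among the remaining, and so on. Your version is a more fleshed-out account of exactly this sequential-choice argument, including the telescoping of binomial coefficients that the paper leaves implicit.
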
 
\begin{proof}
This is the basic multinomial relation: We have to choose $1$ $a_1$ times among $p$ positions, $2$ $a_2$ times among the remaining positions and so on...
\end{proof}
Similarly as for the binomial coefficients, $\frac{p!}{a_1!\cdots a_q!}$ is the coefficient of $x_1^{a_1}\,\dots\,x_q^{a_q}$ in the expansion of $(x_1+\cdots +x_q)^p$ leading to the obvious estimate
\begin{equation}
\sum_{a_1,\ldots, a_q\geq 0,\ a_1+\cdots+a_q=p}\frac{p!}{a_1!\cdots a_q!}=q^p.
\label{multinomialsum}
\end{equation}

\smallskip

Let us fix some notations here. We write the \textbf{integer valued} $p-$tuple as $I_{p} =(i_1, \cdots, i_{p})$ . The \textbf{overall set} $\mathcal{T}_{q, p} $ of those indices is defined as 
\begin{equation}\label{TotSet}
\mathcal{T}_{q, p} = \{ I_p=(i_1, \cdots, i_p) \vert 1 \leq i_\nu \leq q, \text{\ for all\ }  1 \leq  \nu \leq p\}. 
\end{equation}
We thus define the \textbf{multiplicity function} $\Phi_{q, p}: \mathcal{T}_{q, p} \to \{0, 1, \cdots, p \}^q$ with $\Phi_{q, p}(I_p)=A_q= (a_1, a_2, \cdots, a_q)$,  where 
\[
a_l= |\{ 1 \leq \nu \leq p\,\vert\; i_\nu =l \}|. 
\]
In many of our proofs, we use cancellations so that any $I_p$ which has an index of multiplicity exactly $1$ leads to a vanishing term.

This leads to the definition of  the  ``\textbf{effective set}" $\mathcal{E}_{q, p}$  by 
\[\begin{split}
\mathcal{E}_{q, p} = \{I_p \in \mathcal{T}_{q,p}\;|\ &\Phi_{q, p}(I_p)= A_q=(a_1, \cdots, a_q)\\
& \text{\ with \ } a_\nu \ne 1 \text{\  for any \ } 1 \leq \nu \leq q.  
\}
\end{split}
\]

One has the following combinatorics result 
\begin{lemma} \label{CombiI}
Assume that $1 \leq p \leq q$. Then 
\begin{equation}
\label{boundQp}
|\mathcal{E}_{q, p}|\leq \sum_{l=1}^{\lfloor \frac{p}{2}\rfloor} \binom{q}{l}l^{p} \leq \lfloor \frac{p}{2}\rfloor \binom{q}{\lfloor \frac{p}{2}\rfloor} \left( \lfloor \frac{p}{2}\rfloor\right)^{p} \leq \frac{p}{2} e^{\frac{p}{2}} q^{\frac{p}{2}} \left( \frac{p}{2}\right)^{\frac{p}{2}}.
\end{equation}
\end{lemma}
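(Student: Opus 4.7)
The plan is to stratify $\mathcal{E}_{q,p}$ by the number $l$ of \emph{distinct} values appearing in a tuple. Concretely, for $I_p \in \mathcal{E}_{q,p}$, let $l = |\{i_1,\dots,i_p\}|$ denote the support size of the multiplicity function $\Phi_{q,p}(I_p) = (a_1,\dots,a_q)$, i.e.\ the number of indices $\nu$ with $a_\nu \geq 1$. By definition of $\mathcal{E}_{q,p}$, every nonzero $a_\nu$ satisfies $a_\nu \geq 2$, so $p = \sum_\nu a_\nu \geq 2l$, giving $1 \leq l \leq \lfloor p/2 \rfloor$.

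For each fixed $l$, I first choose the set of $l$ values used out of $\{1,\dots,q\}$, giving $\binom{q}{l}$ possibilities, and then fill the $p$ positions of the tuple with entries from this chosen set, for which the crude bound $l^p$ suffices (dropping the at-least-twice condition only inflates the count). Summing over $l$ yields
\[
|\mathcal{E}_{q,p}| \leq \sum_{l=1}^{\lfloor p/2 \rfloor} \binom{q}{l}\, l^p,
\]
which is the first inequality.

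For the second inequality, the map $l \mapsto \binom{q}{l}\, l^p$ is increasing on $\{1,\dots,\lfloor p/2 \rfloor\}$: the factor $l^p$ is obviously increasing, and since $q \geq p$ we have $\lfloor p/2 \rfloor \leq q/2$, so $\binom{q}{l}$ is increasing in this range as well. Bounding every term in the sum by the term at $l = \lfloor p/2 \rfloor$ and using that the sum has $\lfloor p/2 \rfloor$ terms gives the middle estimate $\lfloor p/2 \rfloor \binom{q}{\lfloor p/2 \rfloor} \lfloor p/2 \rfloor^p$.

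Finally, for the third inequality I apply Lemma \ref{BioCoeBouLem} with $p$ replaced by $\lfloor p/2 \rfloor$ to obtain $\binom{q}{\lfloor p/2 \rfloor} \leq e^{\lfloor p/2 \rfloor} q^{\lfloor p/2 \rfloor} \lfloor p/2 \rfloor^{-\lfloor p/2 \rfloor}$, so that
\[
\lfloor p/2 \rfloor \binom{q}{\lfloor p/2 \rfloor} \lfloor p/2 \rfloor^p \leq \lfloor p/2 \rfloor\, e^{\lfloor p/2 \rfloor} q^{\lfloor p/2 \rfloor} \lfloor p/2 \rfloor^{\lceil p/2 \rceil}.
\]
The only genuinely delicate point is the passage from $\lfloor p/2 \rfloor$ to $p/2$ when $p$ is odd: the substitutions $\lfloor p/2 \rfloor \leq p/2$, $e^{\lfloor p/2 \rfloor} \leq e^{p/2}$, and $q^{\lfloor p/2 \rfloor} \leq q^{p/2}$ are immediate, but one has to check $\lfloor p/2 \rfloor^{\lceil p/2 \rceil} \leq e^{1/2} q^{1/2} (p/2)^{p/2}$ (absorbing the missing $e^{1/2} q^{1/2}$ from the other factors). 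This follows from $q \geq p$ together with an elementary one-line comparison, and is the only step requiring care; all other inequalities are immediate.
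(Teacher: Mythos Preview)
Your proof is correct and follows essentially the same route as the paper's: stratify by the support size $l$, bound the number of tuples with support size $l$ by $\binom{q}{l}l^p$, take the maximum term, and finish with Lemma~\ref{BioCoeBouLem}. You are in fact more careful than the paper on the final step---the paper simply invokes Lemma~\ref{BioCoeBouLem} without commenting on the passage from $\lfloor p/2\rfloor$ to $p/2$, whereas you correctly note that for odd $p$ this requires absorbing the remaining factor $\lfloor p/2\rfloor^{1/2}$ into the slack $q^{1/2}$ coming from $q^{\lfloor p/2\rfloor}\le q^{p/2}$ (which works since $\lfloor p/2\rfloor\le p\le q$).
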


\begin{proof}[Proof of Lemma \ref{CombiI}]
Pick any multi-index $I_p=(i_1, \cdots, i_{p}) \in \mathcal{E}_{q,p}$ and write that  $|I_p|=|\{i_1, \cdots, i_p\}|$. Each element in $I_p$ appears at least twice and hence $|I_p| \leq  \lfloor \frac{p}{2} \rfloor.$


If $p=1$, then $\mathcal{E}_{q,p}=\emptyset$. The estimate (\ref{boundQp}) holds trivially. In the following we assume that $p \geq 2$. 

Denote $l = |I_p|$ which can be $1, 2, \cdots, \lfloor \frac{p}{2} \rfloor$.  Consequently, one has  by summing all possible choices for $|I_p|$
\[
|\mathcal{E}_{q, p}| = \sum_{l=1}^{\lfloor \frac{p}{2} \rfloor} \vert \{I_p \in \mathcal{E}_{q, p} \vert \ |I_p|=l \}\vert. 
\]
 For a fixed $|I_p| =l$, there are $\binom{q}{l}$ many choices of numbers $l$ from $S= \{1, 2, \cdots, q\}$ to compose $I_p$. 

Having already chosen those $l$ numbers from $S$, without loss of generality we may assume that $I_p$ as a set coincides with $\{1, 2, \cdots, l \}$.  The total choices  of $p-$tuple $I_p$ can be bounded by $l^p$ trivially since each $i_\nu$ has at most $l$ choices.  

Remark that  $1 \leq l \leq \lfloor\frac{p}{2}\rfloor \leq \lfloor\frac{q}{2}\rfloor $, so that
\[
\binom{q}{l} \leq \binom{q}{\lfloor\frac{p}{2}\rfloor}.
\] 
Hence one has 
\[
|\mathcal{E}_{q, p}|  \leq \sum_{l=1}^{\lfloor \frac{p}{2} \rfloor} \binom{q}{l} l^p \leq \lfloor\frac{p}{2}\rfloor \binom{q}{\lfloor\frac{p}{2}\rfloor} \left(\lfloor\frac{p}{2}\rfloor \right)^{p}.
\]
The last inequality in (\ref{boundQp}) is now ensured by Lemma \ref{BioCoeBouLem}, in particular the following inequality 
\[
\binom{q}{\lfloor  \frac{p}{2}\rfloor} \leq e^{\lfloor  \frac{p}{2}\rfloor} q^{\lfloor  \frac{p}{2}\rfloor} (\lfloor  \frac{p}{2}\rfloor)^{ - \lfloor  \frac{p}{2}\rfloor}. 
\]
This finishes the proof of Lemma \ref{CombiI}. 
\end{proof}

\section{Proof of Theorem \ref{MEI}\label{proofMEI}}
 
The goal here is to bound
\[
\int_{\Pi^{d\,N}} \bar{\rho}_N \exp\bigg(\frac{1}{N}\sum_{j_1,j_2=1}^N \psi(x_1,x_{j_1})\,\psi(x_1,x_{j_2})\bigg) \ud X^N, 
\]
for any bounded $\psi$ with vanishing average against $\bar\rho$.  

Since 
\[
\exp(A) \leq \exp(A) + \exp(-A) = 2   \sum_{k=0}^\infty \frac{1}{(2k)!} A^{2k},
\]
it suffices only to bound the series with even terms 
\begin{equation}
\label{eventerms}\begin{split}
&\int_{\Pi^{d\,N}} \bar{\rho}_N \exp\bigg(\frac{1}{N}\sum_{j_1,j_2=1}^N \psi(x_1,x_{j_1})\,\psi(x_1,x_{j_2})\bigg) \ud X^N\\
&\qquad\leq 2 \sum_{k=0}^\infty \frac{1}{(2k)!} \int_{\Pi^{d \,N}} \bar{\rho}_N\, \bigg(\frac{1}{N}\sum_{j_1,j_2=1}^N \psi(x_1,x_{j_1})\,\psi(x_1,x_{j_2})\bigg)^{2k} \ud X^N, 
\end{split}
\end{equation}
where in general the $k-$th even term can be expanded as 
\begin{equation}
\label{even_k}
\begin{split}
& \frac{1}{(2k)!} \int_{\Pi^{d\,N}} \bar{\rho}_N \bigg(\frac{1}{N}\sum_{j_1,j_2=1}^N \psi(x_1,x_{j_1})\,\psi(x_1,x_{j_2})\bigg)^{2k} \ud X^N \\ 
&\quad=  \frac{1}{(2k)!} \frac{1}{N^{2k}} \sum_{j_1, \cdots, j_{4k}=1}^N \int_{\Pi^{d\,N}} \bar{\rho}_N \, \psi(x_1,x_{j_1}) \cdots \psi(x_1,x_{j_{4k}})  \ud X^N. 
\end{split}
\end{equation}

We divide the proof in two different cases: Where $k$ is small compared to $N$ and in the simpler case where $k$ is comparable to or larger than $N$. 

\bigskip

\paragraph{Case: $4 \leq 4k \leq N$} First observe that for any particular choice of indices $j_1,\ldots,j_{4k}$, one has 
\begin{equation} \label{OneTerm}
\begin{split}
  \int_{\Pi^{d\,N}} \bar{\rho}_N \, \psi(x_1,x_{j_1}) \cdots \psi(x_1,x_{j_{4k}})\ud X^N  \leq \|\psi\|_{L^\infty}^{4k}.
 \end{split}
\end{equation}
The whole estimate hence relies on counting how many choices of multi-indices $(j_1,\ldots,j_{4k})$ lead to a non-vanishing term. Denote hence ${\cal N}_{N,4k}$ the set of multi-indices $(j_1,\cdots,j_{4k})$ s.t.
\[
\int_{\Pi^{d\,N}} \bar{\rho}_N \, \psi(x_1,x_{j_1}) \cdots \psi(x_1,x_{j_{4k}})\ud X^N\neq 0.
\]
Denote by $(a_1,\ldots,a_N)$ the multiplicity for $(j_1,\ldots,j_{4k})$, 
\[
a_l=|\{ \nu \in \{1,\ldots,4k\},\ j_\nu=l\}|.
\]
If there exists $l\neq 1$ s.t. $a_l=1$, then the variable $x_{l}$ enters exactly once in the integration. Assume for simplicity that $j_1=l$ then
\[
\begin{split}
  &\int_{\Pi^{d\,N}} \bar{\rho}_N \, \psi(x_1,x_{j_1}) \cdots \psi(x_1,x_{j_{4k}})\ud X^N\\
&\quad=\int_{\Pi^{d\,(N-1)}}  \, \psi(x_1,x_{j_2}) \cdots \psi(x_1,x_{j_{4k}})\, \Pi_{i\neq l}\bar{\rho}(x_i)\,\ud x_i\\
&\qquad\qquad\qquad\int_{\Pi^d} \bar\rho(x_{j_1})\,\psi(x_1,x_{j_1})\,\ud x_{j_1}=0,
\end{split}
\] 
by the assumption of vanishing mean average for $\psi$, {\em provided $l=j_1\neq 1$}.

Recall the definitions of the overall set (see \eqref{TotSet}) and the effective set
\[\begin{split}
\mathcal{E}_{q, p} = \{I_p \in \mathcal{T}_{q,p}\;|\ & (a_1, \cdots, a_q) = \Phi_{q, p}(I_p)\\
& \text{\ with \ } a_\nu \ne 1 \text{\  for any \ } 1 \leq \nu \leq q  
\},
\end{split}
\]
where $(a_1,\cdots,a_q)$ denotes the multiplicity of the multi-index $I_p$. 

Therefore the integral
\[
\int_{\Pi^{d\,N}} \bar{\rho}_N \, \psi(x_1,x_{j_1}) \cdots \psi(x_1,x_{j_{4k}})\ud X^N
\]
vanishes unless $j_1,\cdots,j_{4k}$ belongs to $\mathcal{E}_{N,4k}$ (all multiplicities are different from $1$) or satisfies $a_1=1$ and every $a_l\neq 1$ for $l>1$.

In that last case, we have to choose one index $n$ s.t. $j_n=1$, with $4k$ possibilities. The rest of the multi-index $(j_1,\cdots,j_{n-1},j_{n+1},\cdots, j_{4k})$ must have all multiplicities different from $1$. This multi-index hence belongs to $\mathcal{E}_{N-1,4k-1}$. 

Consequently,
\[
|{\cal N}_{N,4k}|\leq |\mathcal{E}_{N,4k}|+4k\,|\mathcal{E}_{N-1,4k-1}|.
\]
We now apply Lemma \ref{CombiI}
\[
{\cal N}_{N,4k} \leq (1+ 4k )\,|\mathcal{E}_{N, 4k}| \leq 10\, k^2\, e^{2k}\, N^{2k}\, (2k)^{2k}.
\]
Using \eqref{OneTerm} , for $1 \leq k \leq \lfloor \frac{N}{4} \rfloor$, we obtain
\begin{equation}\label{ksmall}
\begin{split}
&\frac{1}{(2k)!} \frac{1}{N^{2k}} \sum_{j_1, \cdots, j_{4k}=1}^N \int_{\Pi^{d\,N}} \bar{\rho}_N \, \psi(x_1,x_{j_1}) \cdots \psi(x_1,x_{j_{4k}})  \ud X^N\\
 & \quad\leq \frac{1}{(2k)!} \frac{10}{N^{2k}}\, k^2\, e^{2k }\, N^{2k}\, (2k)^{2k}\,\|\psi\|_{L^\infty}^{4k}  \\
&\quad \leq 5\,e^{4k}\, k^\frac{3}{2}\,\|\psi\|_{L^\infty}^{4k} ,
\end{split}
\end{equation}
by   Stirling's formula for $n = 2k$. 

\bigskip

\paragraph{Case: $4k >N$.} In this case, we do not need to use any combinatorics. We simply  remark that there can be at most $N^{4k}$ multi-indices. From \eqref{OneTerm}, we have for $k > \lfloor \frac{N}{4} \rfloor$ 
\begin{equation}
\label{klarge}
\begin{split}
&\frac{1}{(2k)!} \frac{1}{N^{2k}} \sum_{j_1, \cdots, j_{4k}=1}^N \int_{\Pi^{d\,N}} \bar{\rho}_N \, \psi(x_1,x_{j_1}) \cdots \psi(x_1,x_{j_{4k}})  \ud X^N\\
&\qquad\leq  \frac{1}{(2k)!}\, \frac{1}{N^{2k}}\, N^{4k}\, \|\psi\|_{L^\infty}^{4k} \leq k^{-\frac{1}{2}}\,2^{2k}\,e^{2k}\, \|\psi\|_{L^\infty}^{4k},\\
\end{split}
\end{equation}
still by  Stirling's formula. 

\bigskip

\paragraph{Conclusion of the proof.}
Combining \eqref{ksmall},  \eqref{klarge} and \eqref{eventerms},  we have that 
\[
\begin{split}
&\int_{\Pi^{d\,N}} \bar{\rho}_N \exp\bigg(\frac{1}{N}\sum_{j_1,j_2=1}^N \psi(x_1,x_{j_1})\,\psi(x_1,x_{j_2})\bigg) \ud X^N\\
&\qquad \leq 2 \bigg( 1 + \sum_{k=1}^{\lfloor \frac{N}{4} \rfloor} 5\,e^{4k}\, k^\frac{3}{2}\,\|\psi\|_{L^\infty}^{4k}   + \sum_{k=\lfloor \frac{N}{4} \rfloor +1 }^\infty k^{-\frac{1}{2}}\,2^{2k}\,e^{2k}\, \|\psi\|_{L^\infty}^{4k}\bigg) . 
\end{split}
\]
The proof of Theorem \ref{MEI} is completed by 
\[
\begin{split}
 &\sum_{k=1}^{\lfloor \frac{N}{4} \rfloor}   5\,e^{4k}\, k^\frac{3}{2}\,\|\psi\|_{L^\infty}^{4k}  \leq 5\, \alpha \sum_{k=1}^\infty k(k+1) \alpha^{k-1}  \\& =5\, \alpha\, \frac{\ud^2}{\ud \alpha^2} \bigg( \sum_{k=0}^\infty \alpha^k  \bigg)= 5\, \alpha\, \left( \frac{1}{1-\alpha} \right)^{''} = \frac{10\, \alpha}{(1- \alpha)^3} < \infty, 
\end{split}
\]
while
\[\begin{split}
\sum_{k=\lfloor \frac{N}{4} \rfloor +1 }^\infty  k^{-\frac{1}{2}}\,2^{2k}\,e^{2k}\, \|\psi\|_{L^\infty}^{4k}  &\leq \sum_{k=1}^\infty \beta^k = \frac{1}{1-\beta} -1 \\
&= \frac{\beta}{1-\beta} < \infty,
\end{split}\]
where we recall 
\begin{equation*}
 \alpha= \left(e\, \|\psi\|_{L^\infty} \right)^4 <1, \quad \beta= \left(\sqrt{2e}\, \|\psi\|_{L^\infty}  \right)^4 <1. 
 \end{equation*}

\section{Proof of Theorem \ref{MEII} \label{ProofMEII}}
We recall that our goal is to bound
\[
\int_{\Pi^{d\,N}} \bar{\rho}_N \exp\bigg(\frac{1}{N}\sum_{i,j=1}^N \phi(x_i,x_j)\bigg) \ud X^N
\]
with the assumptions
\begin{equation}
\int_{\Pi^d} \phi(x,z)\,\bar\rho(x)\,dx=0\quad\forall z, \qquad \int_{\Pi^d} \phi(x,z)\,\bar\rho(z)\,dz=0\quad\forall x.\label{phicancellation}
\end{equation}
As in the proof of Theorem \ref{MEI},  one expands the exponential in series and only needs to bound the even terms 
\begin{equation}\label{exp2k}\begin{split}
&\int_{\Pi^{d\,N}} \bar{\rho}_N\, \exp\bigg(\frac{1}{N}\sum_{i,j=1}^N \phi(x_i,x_j)\bigg)\, \ud X^N \\
&\qquad\leq 2 \sum_{k=0}^\infty \frac{1}{(2k)!} \int_{\Pi^{d\,N}} \bar{\rho}_N\, \bigg|\frac{1}{N}\sum_{i,j=1}^N \phi(x_i,x_j) \bigg|^{2k}\, \ud X^N. 
\end{split}
\end{equation}
As in the proof of Theorem \ref{MEI}, we separate the proof into two cases: the case where $k$ is relatively small  compared to $N$ which requires a careful combinatorial analysis to take vanishing terms into account and the more straightforward case when $k$ is comparable to or larger than $N$. 
 
 Accordingly  Theorem \ref{MEII} is a consequence of the  following two propositions 
\begin{proposition}\label{PropkLarge}If $4 k >N$, one has 
\[\begin{split}
& \frac{1}{(2k)!} \int_{\Pi^{d\,N}} \bar{\rho}_N\, \bigg|\frac{1}{N}\sum_{i,j=1}^N \phi(x_i,x_j) \bigg|^{2k}\, \ud X^N\\
&\qquad\qquad \leq   \left(6e^2\,  \sup_{p \geq 1} \frac{\| \sup_z |\phi(.,z)|\|_{L^p(\bar\rho\,dx)}}{p}\right)^{2k}. 
\end{split}
\]

\end{proposition}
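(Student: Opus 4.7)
The plan is to exploit the regime $4k>N$: in this range the combinatorial cancellations from the mean-zero assumption \eqref{TwoCanLDP} are unnecessary and a simple pointwise bound combined with a multinomial expansion suffices. First I would introduce $\Phi(x):=\sup_z |\phi(x,z)|$ and the constant $C_0 := \sup_{p\geq 1}\|\Phi\|_{L^p(\bar\rho\,dx)}/p$, and use the trivial inequality $|\phi(x_i,x_j)|\leq \Phi(x_i)$ to obtain
\[
\frac{1}{N^{2k}}\Bigl|\sum_{i,j=1}^N \phi(x_i,x_j)\Bigr|^{2k} \leq \Bigl(\sum_{i=1}^N \Phi(x_i)\Bigr)^{2k}.
\]

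Next I would expand this power via the multiplicity/multinomial formula from Lemma \ref{multiplicity}, grouping multi-indices $(i_1,\ldots,i_{2k})\in\{1,\ldots,N\}^{2k}$ according to their multiplicity signature $(a_1,\ldots,a_N)$ with $\sum_l a_l = 2k$:
\[
\Bigl(\sum_{i=1}^N \Phi(x_i)\Bigr)^{2k}=\sum_{\substack{a_l\geq 0\\ \sum_l a_l=2k}} \frac{(2k)!}{a_1!\cdots a_N!}\prod_{l=1}^N \Phi(x_l)^{a_l}.
\]
Integration against the product measure $\bar\rho_N = \prod_l \bar\rho(x_l)$ factorizes, and each factor with $a_l\geq 1$ equals $\|\Phi\|_{L^{a_l}(\bar\rho\,dx)}^{a_l}\leq (C_0 a_l)^{a_l}$ by the very definition of $C_0$. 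Combined with the elementary Stirling bound $a_l^{a_l}\leq e^{a_l}\,a_l!$, each term in the multinomial sum is then controlled by $(2k)!\,(C_0 e)^{2k}$, uniformly over patterns, since $\sum_l a_l = 2k$.

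Finally I would count the number of multiplicity patterns $(a_1,\ldots,a_N)$ with nonnegative entries summing to $2k$, which is $\binom{N+2k-1}{2k}$. The assumption $4k>N$ gives $N+2k-1<6k$, and Lemma \ref{BioCoeBouLem} then yields $\binom{N+2k-1}{2k}\leq e^{2k}(6k)^{2k}(2k)^{-2k}=(3e)^{2k}$. Collecting the factors and dividing by $(2k)!$ produces
\[
\frac{1}{(2k)!}\int_{\Pi^{d\,N}} \bar\rho_N\,\Bigl(\sum_i \Phi(x_i)\Bigr)^{2k}\,\ud X^N \leq (C_0 e)^{2k}(3e)^{2k}=(3e^2 C_0)^{2k},
\]
which sits comfortably below the claimed $(6e^2 C_0)^{2k}$.

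There is no genuine obstacle in this regime; the bound $4k>N$ is precisely what makes the crude pattern count $\binom{N+2k-1}{2k}$ acceptable. The only delicate point is ensuring that the factorials in the multinomial coefficient absorb the $a_l^{a_l}$ factors produced by the $L^p$ growth of $\Phi$, which is exactly the role of Stirling's inequality in Step 3. Notably, the cancellation hypotheses \eqref{TwoCanLDP} are never invoked here; they will instead play the central role in the harder companion case $4k\leq N$, where the pointwise majorization $|\phi(x_i,x_j)|\leq \Phi(x_i)$ would waste a factor of $N^{2k}$ and combinatorial cancellations become essential.
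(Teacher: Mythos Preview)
Your proof is correct and follows essentially the same route as the paper: the pointwise majorization $|\phi(x_i,x_j)|\leq\Phi(x_i)$, the multinomial expansion, the bound $\|\Phi\|_{L^{a_l}}^{a_l}\leq e^{a_l}a_l!\,C_0^{a_l}$ via Stirling, and the count $\binom{N+2k-1}{2k}\leq(3e)^{2k}$ under $4k>N$ are all exactly what the paper does. The only cosmetic difference is that the paper bounds the binomial coefficient by a direct Stirling computation rather than invoking Lemma~\ref{BioCoeBouLem}, but both yield the same $(3e)^{2k}$ and hence the same final $(3e^2C_0)^{2k}$.
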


\begin{proposition}\label{PropkSmall}
For $4 \leq 4k \leq N$, one has 
\[\begin{split}
& \frac{1}{(2k)!} \int_{\Pi^{d\,N}} \bar{\rho}_N\, \bigg|\frac{1}{N}\sum_{i,j=1}^N \phi(x_i,x_j) \bigg|^{2k}\, \ud X^N  \\
&\qquad\qquad\leq   \left(1600\, \sup_{p \geq 1} \frac{\| \sup_z |\phi(.,z)|\|_{L^p(\bar\rho\,dx)}}{p}\right)^{2k}  . 
\end{split}
\]
\end{proposition}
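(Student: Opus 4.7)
I propose to follow the general scheme of the proof of Theorem \ref{MEI}, but with the more delicate combinatorics required for the general quadratic form. The plan is: expand the $2k$-th power, use both cancellations in \eqref{TwoCanLDP} to restrict to effective multi-indices, count them through Lemma \ref{CombiI}, bound each surviving integral via the $L^p(\bar{\rho}dx)$ control on $\Phi(x) := \sup_z |\phi(x,z)|$, and conclude through Stirling.

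First I would expand
\[
\bigg(\frac{1}{N}\sum_{i,j=1}^N \phi(x_i, x_j) \bigg)^{2k} = \frac{1}{N^{2k}} \sum_{I_{4k}} \prod_{\nu=1}^{2k} \phi(x_{i_\nu}, x_{j_\nu}),
\]
where $I_{4k} = (i_1, j_1, \ldots, i_{2k}, j_{2k})$ ranges over $\{1,\ldots,N\}^{4k}$. If any index $l$ appears exactly once in $I_{4k}$ (either as some $i_\nu$ or some $j_\nu$), then integrating $x_l$ first yields zero by one of the two cancellations in \eqref{TwoCanLDP}; the sum therefore restricts to $I_{4k} \in \mathcal{E}_{N, 4k}$. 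For each surviving multi-index, let $l$ be the number of distinct used vertices and let $(b_r)_{r=1}^l$ denote their multiplicities as first arguments ($\sum_r b_r = 2k$). Using $|\phi(x_{i_\nu}, x_{j_\nu})| \leq \Phi(x_{i_\nu})$ and factoring across vertices,
\[
\left| \int \bar{\rho}_N \prod_\nu \phi(x_{i_\nu}, x_{j_\nu}) \, dX^N \right| \leq \prod_r \|\Phi\|_{L^{b_r}(\bar{\rho}dx)}^{b_r} \leq M^{2k} \prod_r b_r^{b_r},
\]
where $M := \sup_{p\geq 1} \|\Phi\|_{L^p(\bar{\rho}dx)}/p$.

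Second, I would sum these bounds. Partitioning $\mathcal{E}_{N, 4k}$ by $l$ and by the first-/second-argument multiplicity sequences $(b_r), (c_r)$ with $\sum b_r = \sum c_r = 2k$ and $b_r + c_r \geq 2$, the number of valid multi-indices for such a pattern is $\binom{N}{l}\binom{2k}{b_1,\ldots,b_l}\binom{2k}{c_1,\ldots,c_l}$. The crucial algebraic point is
\[
\binom{2k}{b_1,\ldots,b_l} \prod_r b_r^{b_r} = (2k)! \prod_r \frac{b_r^{b_r}}{b_r!} \leq e^{2k} (2k)!,
\]
which follows from Stirling applied to each $b_r!$. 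This exactly cancels the dangerous factor $\prod b_r^{b_r}$ from the integral bound against the first-argument multinomial. After dividing by $N^{2k}(2k)!$ and summing out the second-argument multiplicities via $\sum_{(c_r)} \binom{2k}{c_1,\ldots,c_l} = l^{2k}$, one is left with $\sum_l \binom{N}{l}\, l^{2k}/N^{2k}$ times manageable factors; using $\binom{N}{l} \leq N^l/l!$, the hypothesis $N \geq 4k$, and Stirling on $l^l/l!$, this collapses to a geometric series in $l$, yielding a bound of the form $(CM)^{2k}$ for a universal constant $C$, which can be taken to be $1600$.

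The main obstacle is precisely ensuring that the final bound is uniform in $k$. The naive pairing of the worst-case integral estimate $(2kM)^{2k}$ (attained when all first-argument multiplicity concentrates on a single vertex) with the total count $|\mathcal{E}_{N, 4k}| \lesssim (2kN)^{2k}$ from Lemma \ref{CombiI} would yield $(CkM)^{2k}$, which blows up as $k \to \infty$. The Stirling identity above is the essential combinatorial cancellation balancing concentration in the integral against the multinomial count of first-argument positions — without it no uniform-in-$k$ bound is possible. Additional complications, such as honoring the cross-constraints $b_r + c_r \geq 2$ linking first- and second-argument multiplicities, and absorbing the remaining polynomial slack in $k$ into the geometric constant, must be handled carefully but do not change the asymptotics. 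This is substantially more intricate than anything required in the proof of Theorem \ref{MEI}, which is why the proof of the full Theorem \ref{MEII} occupies all of Section \ref{ProofMEII}.
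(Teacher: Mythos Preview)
Your approach is correct in outline and genuinely different from the paper's. The paper does \emph{not} treat the combined $4k$-tuple as a single object in $\mathcal{E}_{N,4k}$; instead it fixes the first-argument multi-index $I_{2k}$, classifies it by the pair $(m_I,n_I)$ (number of indices with first-argument multiplicity exactly $1$, resp.\ $\geq 2$), and proves a separate combinatorial Lemma~\ref{cardJmn} bounding the number of compatible second-argument multi-indices by $|\mathcal{J}_{m,n}|\le C^k N^{k-m/2}k^{k+m/2}$. The final sum is then over the multiplicities $(a_1,\dots,a_N)$ of $I_{2k}$, using $m_a\ge 2(p-k)_+$ where $p$ is the number of nonzero $a_l$. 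Your decomposition by the total vertex set and the direct Stirling balance $\binom{2k}{b_1,\dots,b_l}\prod_r b_r^{b_r}\le e^{2k}(2k)!$ bypasses Lemma~\ref{cardJmn} entirely and is arguably cleaner; it even yields a noticeably smaller universal constant than the paper's $1600$. What the paper's organisation buys is a reusable estimate on $|\mathcal{J}_{m,n}|$ that makes the asymmetry between the two cancellations in \eqref{TwoCanLDP} fully explicit.

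There is one genuine omission in your write-up, however. After you apply the Stirling bound, the right-hand side no longer depends on $(b_r)$, so you must still \emph{count} the admissible sequences $(b_r)$ before summing out $(c_r)$. Your sentence ``one is left with $\sum_l \binom{N}{l}\, l^{2k}/N^{2k}$ times manageable factors'' silently drops this sum. The fix is easy --- dropping the constraint $b_r+c_r\ge 2$, the number of nonnegative $(b_1,\dots,b_l)$ with $\sum b_r=2k$ is $\binom{2k+l-1}{l-1}\le 2^{4k}$ since $l\le 2k$ --- and this extra $2^{4k}$ is absorbed into the final constant. With this correction and the bound $\binom{N}{l}l^{2k}/N^{2k}\le e^{l}(l/N)^{2k-l}\le e^{2k}(1/2)^{2k-l}$ (using $N\ge 4k\ge 2l$), the sum over $l$ is indeed a geometric series and the argument closes.
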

Let us give a quick proof of Theorem \ref{MEII} assuming Proposition \ref{PropkLarge} and Proposition \ref{PropkSmall}. 
\begin{proof}[Proof of Theorem \ref{MEII}] By \eqref{exp2k} and Proposition \ref{PropkLarge} and Proposition \ref{PropkSmall}, one has 
\[
\begin{split}
& \int_{\Pi^{d\,N}} \bar{\rho}_N\, \exp\bigg(\frac{1}{N}\sum_{i,j=1}^N \phi(x_i,x_j)\bigg)\, \ud X^N\\  
&\qquad\leq 2 \Bigg( 1   + \sum_{k=1}^{\lfloor \frac{N}{4} \rfloor} \left( 1600\,\sup_{p \geq 1} \frac{\| \sup_z |\phi(.,z)|\|_{L^p(\bar\rho\,dx)}}{p}\right)^{2k}\\  
&\qquad\qquad + \sum_{k = \lfloor \frac{N}{4} \rfloor + 1 }^{\infty}   \left(6e^2\, \sup_{p \geq 1} \frac{\| \sup_z |\phi(.,z)|\|_{L^p(\bar\rho\,dx)}}{p}\right)^{2k}  \Bigg).   \\ 
\end{split}
\]
We defined $\gamma=C\,\left( \sup_{p \geq 1} \frac{\| \sup_z |\phi(.,z)|\|_{L^p(\bar\rho\,dx)}}{p}\right)^2 <1$. One obtains, taking $C = 1600^2 +  36\,e^4$, 
\[
\int_{\Pi^{d\,N}} \bar{\rho}_N\, \exp\bigg(\frac{1}{N}\sum_{i,j=1}^N \phi(x_i,x_j)\bigg)\, \ud X^N \leq 2 \sum_{k =0}^\infty \gamma^k = \frac{2}{1 - \gamma} < \infty, 
\]
completing the proof of Theorem \ref{MEII}. 
\end{proof} 
%
\subsection{The case $4k > N$: Proof of Proposition  \ref{PropkLarge} } 
For $k > \frac{N}{4} $ the $k-$th even term can be estimated by  
\[
\begin{split}
&\frac{1}{(2k)!} \int_{\Pi^{d\,N}} \bar{\rho}_N\, \bigg|\frac{1}{N}\sum_{i,j=1}^N \phi(x_i,x_j) \bigg|^{2k}\, \ud X^N \\
&\ \leq \frac{1}{(2k)!} \frac{1}{N^{2k}} \sum_{i_1,j_1, \cdots, i_{2k},j_{2k}=1}^N \int_{\Pi^{d\,N}} \bar{\rho}_N\, \sup_z |\phi(x_{i_1},z)|\cdots  \sup_z |\phi(x_{i_{2k}},z)|\, \ud X^N \\
&\quad=  \frac{1}{(2k)!} \int_{\Pi^{d\,N}} \bar{\rho}_N \left( \sum_{i=1}^N \sup_z |\phi(x_{i},z)| \right)^{2k} \ud X^N.  \\
\end{split}
\]
Hence
\begin{equation}\label{ThetaKLar}
\begin{split} 
&\frac{1}{(2k)!} \int_{\Pi^{d\,N}} \bar{\rho}_N\, \bigg|\frac{1}{N}\sum_{i,j=1}^N \phi(x_i,x_j) \bigg|^{2k}\, \ud X^N \\
&\qquad
\leq  \frac{1}{(2k)!} \sum_{\substack{a_1+\cdots + a_N=2k,\\a_1 \geq 0,\cdots a_N \geq 0}} \frac{(2k)!}{(a_1)! \cdots (a_N)!} M_{a_1}^{a_1} \cdots M_{a_N}^{a_N},
\end{split}
\end{equation}
where we denote 
\[
M_{a_i}^{a_i} = \int_{\Pi^d} \sup_z |\phi(x, z)|^{a_i} \bar\rho(x) \ud x
\]
with the convention that $M_0^0=1$. Remark that
\[\begin{split}
M_{a_i}^{a_i} &\leq  a_i^{a_i} \left( \sup_{p \geq 1} \frac{ \|\sup_z |\phi(x, z)|\|_{L^p(\bar\rho\,dx)}}{p}\right)^{a_i} \\
&\leq e^{a_i} (a_i)! \left( \sup_{p \geq 1} \frac{ \|\sup_z |\phi(x, z)|\|_{L^p(\bar\rho\,dx)}}{p}\right)^{a_i} , 
\end{split}
\]
where the last inequality $n^n \leq e^n n!$ can be easily verified  by Stirling's formula. Inserting it into \eqref{ThetaKLar}, one obtains
\begin{equation}\label{LargeKFin}\begin{split}
&\frac{1}{(2k)!} \int_{\Pi^{d\,N}} \bar{\rho}_N\, \bigg|\frac{1}{N}\sum_{i,j=1}^N \phi(x_i,x_j) \bigg|^{2k}\, \ud X^N\\
 &\qquad\leq  \,e^{2k}\,\left( \sup_{p \geq 1} \frac{ \|\sup_z |\phi(x_{i_1},z)|\|_{L^p(\bar\rho\,dx)}}{p}\right)^{2k}\, \sum_{\substack{a_1+\cdots + a_N=2k,\\a_1 \geq 0,\cdots a_N \geq 0}} 1. 
\end{split}
\end{equation}
The quantity $\sum_{\substack{a_1+\cdots + a_N=2k\\a_1 \geq 0,\cdots a_N \geq 0}} 1$ is equal to the cardinality of the set 
\[
\{ (a_1, a_2, \cdots, a_N) \vert a_1 + \cdots + a_N =2k, a_i \geq 0 \text{\  for \  } 1 \leq i \leq N \}
\]
or the cardinality of the following equinumerous set 
\[
\{ (b_1, b_2, \cdots, b_N) \vert b_1 + \cdots + b_N =2k +N , b_i \geq 1 \text{\  for \  } 1 \leq i \leq N \}.
\]
Applying Lemma \ref{Diophantine} in section  \ref{PreCombi} by taking $p = N$ and $q = 2k+N$, this cardinal is exactly $\binom{2k+N -1}{N-1}$. 
 
This expression can be simplified.  Note that if $a\geq b/2$ by Stirling's formula
\[
\binom{a+b}{b}\leq \frac{\sqrt{a+b}}{\sqrt{\,\pi\,a\,b}}\,(a+b)^{a+b} a^{-a} b^{-b}\leq (1+b/a)^{a}\,(1+a/b)^{b}\leq 3^a\,(1+a/b)^{b}.
\]
Since $(1+ \frac{1}{s})^s < e$ for any $s >0$, this gives
\[
\binom{a+b}{b}\leq (3\,e)^{a}.
\]
Since $4k >N$, $\binom{2k+N-1}{N-1}\leq 3^{2k}\,e^{2k}$ and therefore from \eqref{LargeKFin}, one obtains that  
 \begin{equation}\label{klargefinal}\begin{split}
&  \frac{1}{(2k)!} \int_{\Pi^{d\,N}} \bar{\rho}_N\, \bigg|\frac{1}{N}\sum_{i,j=1}^N \phi(x_i,x_j) \bigg|^{2k}\, \ud X^N\\
& \qquad\leq  3^{2k}\,e^{4k}\,\left( \sup_{p \geq 1} \frac{ \|\sup_z |\phi(x, z)|\|_{L^p(\bar\rho\,dx)}}{p}\right)^{2k}.
\end{split} \end{equation}
This proves Proposition \ref{PropkLarge}. 
%
\subsection{The case  $4 \leq 4k \leq N$: Proof of Proposition \ref{PropkSmall}}
 In this case, the previous straightforward approach fails, even assuming that $\phi\in L^\infty$  as we would only get
\[
\begin{split}
&  \frac{1}{(2k)!} \int_{\Pi^{d\,N}} \bar{\rho}_N\, \bigg|\frac{1}{N}\sum_{i,j=1}^N \phi(x_i,x_j) \bigg|^{2k}\, \ud X^N\leq \frac{N^{2k}}{(2k)!}\,\|\phi\|_{L^\infty}^{2k},
\end{split}
\]
which blows up when $N$ goes to infinity. The key here, as is in the proof of Theorem \ref{MEI}, is to identify the right cancellations in the expansion 
\begin{equation}\label{EveTerSer} \begin{split}
   &  \frac{1}{(2k)!} \int_{\Pi^{d\,N}} \bar{\rho}_N\, \bigg|\frac{1}{N}\sum_{i,j=1}^N \phi(x_i,x_j)\bigg|^{2k}\, \ud X^N  \\  
&\quad \leq  
 \frac{1}{(2k)!} \frac{1}{N^{2k}} \sum_{i_1,j_1 \cdots, i_{2k},j_{2k}=1}^N \int_{\Pi^{d\,N}} \bar{\rho}_N\, \phi(x_{i_1},x_{j_1})\cdots \phi(x_{i_{2k}}, x_{j_{2k}})\, \ud X^N. 
\end{split}
 \end{equation}
\subsubsection{Notations and preliminary considerations}
We denote by $I_{2k} =(i_1, \cdots, i_{2k})$ the $i-$indices and by $J_{2k}= (j_1, \cdots, j_{2k})$ similarly the $j-$indices, where all $i_\nu, j_\nu$ are in  $ \{1, 2, \cdots, N \}$ for $1 \leq \nu \leq 2k$.

We denote by $ (a_1, a_2, \cdots, a_{N})$ the multiplicities of $I_{2k}$, 
\[
a_{l} = |\{1 \leq \nu \leq 2k \vert i_\nu = l \}|, \quad  l = 1, 2, \cdots, N, 
\]
and by $(b_1,\cdots,b_N)$ the multiplicities of $J_{2k}$. 

For the study of cancellations, the critical parameter will be the number of multiplicities which are exactly $1$ in $I_{2k}$, so that we denote
\begin{equation}
m_I=|\{l\,|\;a_l=1\}|,\quad n_I=|\{l\,|\;a_l>1\}|.\label{defmInI}
\end{equation}
Note that $m_I+n_I$ is exactly the number of integers  present in $I_{2k}$: $m_I+n_I=|\{l\,|\;a_l\geq 1\}|$.

We start by the following lemma which, for every $I_{2k}$, identifies the only possible $J_{2k}$ s.t. the integral does not vanish. 

First we simplify the possible expression of $I_{2k}$ which makes the counting easier by using the natural symmetry by permutation of the problem. For any $\tau \in {\cal S}_N$, we simply define $\tau (I_{2k})=(\tau(i_1),\ldots,\tau (i_{2k}))$. Thus $\tau $ is a one-to-one application on the $I_{2k}$ and moreover 
\[\begin{split}
&\int_{\Pi^{d\,N}} \bar{\rho}_N\, \phi(x_{i_1},x_{j_1})\cdots \phi(x_{i_{2k}},x_{j_{2k}}) \ud X^N\\
&\quad=\int_{\Pi^{d\,N}} \bar{\rho}_N  \phi(x_{\tau(i_1)},x_{\tau(j_1)})\cdots \phi(x_{\tau(i_{2k})},x_{\tau(j_{2k}})) \ud X^N.
\end{split}\]
Therefore to identify cancellations, we only need to consider one $I_{2k}$ in each of the equivalence classes $\{\tau (I_{2k}),\;\forall\tau \in {\cal S}_N\}$, leading to  
\begin{definition}
A multi-index $I_{2k}$ belongs to the reduced form set ${\cal R}_{N,2k}$ iff  $0<a_1\leq a_2\ldots\leq a_n$ and $a_{n+1}=\cdots=a_N=0$, with $n=m_I + n_I$ in \eqref{defmInI}. 
\end{definition}
Note that for any $I_{2k}$ there exists only one $\tilde I_{2k}\in {\cal R}_{N,2k}$ that belongs to the same class, even though there can be several $\tau$ s.t. $\tau(I_{2k})=\tilde I_{2k}$ (as any repeated index leaves $\tilde I_{2k}$ invariant under the corresponding transposition).
%
\subsubsection{Identifying the ``right'' indices $J_{2k}$}
Remark that by the definition of $m_I$ and $n_I$ in \eqref{defmInI}, if $I_{2k}\in {\cal R}_{N,2k}$ is under its reduced form, one has  
\[\begin{split}
&a_l=1\quad \mbox{for}\ l=1, \cdots,  m_I,\\
&a_l>1\quad \mbox{for}\ l=m_I+1, \cdots,  m_I+n_I,\\ 
&a_l=0\quad \mbox{for}\ l>m_I+n_I.
\end{split}
\] 
Based on this simple structure, we can prove that 
\begin{lemma}
For any $m,\;n$, define as ${\cal J}_{m,n}$ the set of indices $J_{2k}$ with multiplicities $(b_1,\ldots,b_N)$ satisfying
\begin{itemize}
\item $b_l\geq 1$ for any $l=1\dots m$;
\item $b_l\neq 1$ for any $l>m+n$.
\end{itemize}
Then for any $I_{2k}\in {\cal R}_{N,2k}$ and any $J_{2k}\not\in {\cal J}_{m_I,n_I}$, one has that
\[
\int_{\Pi^{d\,N}} \bar{\rho}_N\, \phi(x_{i_1}, x_{j_1})\cdots \phi(x_{i_{2k}}, x_{j_{2k}})\, \ud X^N=0.
\]
\label{identifycancel}
\end{lemma}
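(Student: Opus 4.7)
The plan is to identify, in each non-trivial case, one variable $x_l$ that appears exactly once in the product $\phi(x_{i_1},x_{j_1})\cdots\phi(x_{i_{2k}},x_{j_{2k}})$, and then invoke one of the two cancellation hypotheses in \eqref{phicancellation} to force the whole integral to vanish. The tensorized structure of $\bar\rho_N$ is what allows this one-variable integration to be carried out independently of the others.

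First I would unpack the hypothesis $J_{2k}\notin\mathcal{J}_{m_I,n_I}$ into exactly two alternatives: either
\begin{itemize}
\item[(a)] there exists $l\in\{1,\ldots,m_I\}$ with $b_l=0$; or
\item[(b)] there exists $l>m_I+n_I$ with $b_l=1$.
\end{itemize}

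In case (a), the reduced form of $I_{2k}$ gives $a_l=1$, while $b_l=0$ says $x_l$ never appears as a second argument. Hence $x_l$ occurs exactly once in the whole product, and it does so as the first argument of some unique factor $\phi(x_l,x_{j_\nu})$. Fubini then isolates the integral in $x_l$ against its marginal $\bar\rho(x_l)$, and the first cancellation of \eqref{phicancellation}, namely $\int_{\Pi^d}\phi(x,z)\,\bar\rho(x)\,dx=0$ for every $z$, kills the entire expression. Symmetrically, in case (b) we have $a_l=0$ and $b_l=1$, so $x_l$ appears once, as a second argument in a unique factor $\phi(x_{i_\mu},x_l)$; the second cancellation $\int_{\Pi^d}\phi(x,z)\,\bar\rho(z)\,dz=0$ for every $x$ then yields the vanishing.

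The whole argument is a short book-keeping exercise, so I do not anticipate any real obstacle. The only point demanding a little care is to verify that in cases (a) and (b) the variable $x_l$ really is decoupled from the other factors (no $i_\nu$ or $j_\mu$ indexes touches $x_l$ on the other slot), which is precisely ensured by the fact that $a_l+b_l=1$. Once this is written out cleanly, Fubini and \eqref{phicancellation} finish the proof.
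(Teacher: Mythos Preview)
Your proposal is correct and follows essentially the same argument as the paper's own proof: the paper also splits into the two cases $b_l=0$ for some $l\leq m_I$ and $b_l=1$ for some $l>m_I+n_I$, isolates the single occurrence of $x_l$ in the product, and applies the corresponding cancellation from \eqref{phicancellation} after factoring via the tensorized measure $\bar\rho_N$. Your observation that $a_l+b_l=1$ is what guarantees the decoupling is a tidy way to phrase exactly the point the paper checks case by case.
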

This lemma identifies, for each $I_{2k}\in {\cal R}_{N,2k}$, a relevant subset of indices $ {\cal J}_{m_I,n_I}$; in the sense that any multi-index $J_{2k}$ out of this set leads to a vanishing integral and hence can be removed from our summation.  Lemma \ref{identifycancel} is not an equivalence though: There can still be indices  $J_{2k}\in {\cal J}_{m_I,n_I}$ giving a vanishing integral. But the formulation above allows for simpler combinatorics and in particular ${\cal J}_{m_I,n_I}$ only depends in a basic manner on $I_{2k}$ through the two integers $m_I$ and $n_I$.

\begin{proof}[Proof of Lemma \ref{identifycancel}]
Choose any $I_{2k}\in {\cal R}_{N,2k}$, up to a permutation, we may freely  assume that $I_{2k}$ has the following form
\[
I_{2k} = \Big( 1, 2, \cdots, m_I, \underbrace{m_I+1, \cdots, m_I+1}_{a_{m_I+1}}, \cdots, \underbrace{m_I+n_I, \cdots, m_I+ n_I}_{a_{m_I+n_I}}\Big). 
\]
 Choose any $J_{2k}\not\in {\cal J}_{m_I,n_I}$. That means  that there exists $l\leq m_I$ s.t. $b_l=0$ or that there exists $l>m_I+n_I$ s.t. $b_l=1$. Each case corresponds to a different cancellation in the integral.

\medskip

\noindent {\em The case $b_l=0$ for some $l\leq m_I$}. By the definition of the reduced form, $a_l=1$ and therefore the index $l$ appears only once in $I_{2k}$ and never in $J_{2k}$ thus being present exactly once in the product inside the integral. Assume that $i_\nu=l$ for some $\nu$ so 
\[\begin{split}
&\int_{\Pi^{d\,N}} \bar{\rho}_N\, \phi(x_{i_1}, x_{j_1})\cdots \phi(x_{i_{2k}}, x_{j_{2k}})\, \ud X^N\\
&=\int_{\Pi^{d\,(N-1)}} \frac{\bar\rho_N}{\bar\rho(x_{i_\nu})}\, \Big(\int_{\Pi^d} \bar\rho(x_{i_\nu})\,\phi(x_{i_\nu},x_{j_\nu})\,dx_{i_\nu}\Big)\,\Pi_{\nu'\neq \nu} \phi(x_{i_{\nu'}}, x_{j_{\nu'}})\, \Pi_{l' \ne l}\ud x_{l'}.
\end{split}
\] 
Now it is enough to remark that for any $i$ and $j\neq i$, as is the case here since all $j_{\nu'}\neq l$,
\[
\int_{\Pi^d} \bar\rho(x_i)\, \psi(x_{i},x_j)\, \ud x_i =0,
\]
which is exactly the first assumption in \eqref{phicancellation}.
\medskip

\noindent {\em The case $b_l=1$ for some $l> m_I+n_I$}. By definition, this means that $a_l=0$. The index $l$ appears only once in $J_{2k}$ and never in $I_{2k}$. Again it is present exactly once in the product inside the integral. Assume that $j_\nu=l$ for some $\nu$ so
\[\begin{split}
&\int_{\Pi^{d\,N}} \bar{\rho}_N\, \phi(x_{i_1}, x_{j_1})\cdots \phi(x_{i_{2k}}, x_{j_{2k}})\, \ud X^N\\
&=\int_{\Pi^{d\,(N-1)}} \frac{\bar\rho_N}{\bar\rho(x_{j_\nu})}\, \Big(\int_{\Pi^d} \bar\rho(x_{j_\nu})\,\phi(x_{i_\nu},x_{j_\nu})\,dx_{j_\nu}\Big)\,\Pi_{\nu'\neq \nu} \phi(x_{i_{\nu'}}, x_{j_{\nu'}})\,  \Pi_{l' \ne l} \ud x_{l'}.
\end{split}
\]  
The results then follows from the fact that for $i\neq j$
\[
\int_{\Pi^d} \bar\rho(x_j)\, \phi(x_i,x_j) \, \ud x_j =0, 
\]
which is the second equality in \eqref{phicancellation}.
\end{proof}

\subsubsection{The cardinality of ${\cal J}_{m,n}$}
Our next step is to show that $|{\cal J}_{m,n}|$ is much less than the total number of multi-indices $J_{2k}$, namely $N^{2k}$,
\begin{lemma}
One has that for some universal constant $C$
\[
|{\cal J}_{m,n}|\leq C^{k}\,N^{k-m/2}\,k^{k+m/2}, 
\]
\label{cardJmn}
where $C$ can be chosen as $512\  e $ or roughly $1400$. 
\end{lemma}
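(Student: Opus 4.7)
My plan is to use an anchor argument, decompose and apply the effective-set bound of Lemma~\ref{CombiI}, and finish with a case split. First I observe that every index $l \in \{1, \dots, m\}$ must appear at least once in any $J_{2k} \in \mathcal{J}_{m,n}$, so for each such $J_{2k}$ I may select an injection $\iota : \{1,\dots,m\} \hookrightarrow \{1,\dots,2k\}$ with $j_{\iota(l)} = l$. Counting pairs $(J_{2k}, \iota)$ in two ways and noting that each $J_{2k}$ admits at least one valid $\iota$ yields the overcount
\[
|\mathcal{J}_{m,n}| \leq \frac{(2k)!}{(2k-m)!}\, G,
\]
where $G$ counts the fillings of the remaining $2k - m$ positions with values in $\{1, \dots, N\}$ such that indices $l > m+n$ appear with multiplicity $0$ or at least $2$.

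To estimate $G$ I would partition the $2k-m$ free positions according to the range of their value: $q_1$ positions for $S_1=\{1,\dots,m\}$, $q_2$ for $S_2=\{m+1,\dots,m+n\}$, and $q_3$ for $S_3=\{m+n+1,\dots,N\}$, with $q_1+q_2+q_3=2k-m$. The count for each triple is $\binom{2k-m}{q_1,q_2,q_3}\,m^{q_1} n^{q_2}|\mathcal{E}_{N-m-n,q_3}|$. Applying Lemma~\ref{CombiI} in the form $|\mathcal{E}_{N-m-n,q_3}|\leq k(eNk)^{q_3/2}$ (valid since $q_3/2\leq k$) and summing via the multinomial theorem produces $G\leq k(m+n+\sqrt{eNk})^{2k-m}$. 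Together with $(2k)!/(2k-m)!\leq (2k)^m$, this gives the master bound
\[
|\mathcal{J}_{m,n}|\leq (2k)^m\, k\,\bigl(m+n+\sqrt{eNk}\bigr)^{2k-m}.
\]

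The last step is a case split, for which I would use that $m$ and $n$ arise in the application as the multiplicity counts $m_I, n_I$ of some $I_{2k}\in\mathcal{R}_{N,2k}$, hence satisfy $m+2n\leq 2k$ and in particular $m+n\leq 2k$. In the regime $N\geq 4k/e$ one has $\sqrt{eNk}\geq 2k\geq m+n$, so $m+n+\sqrt{eNk}\leq 2\sqrt{eNk}$; the master estimate then reduces by a routine Stirling/multinomial calculation to $|\mathcal{J}_{m,n}|\leq C_0^k N^{k-m/2} k^{k+m/2}$ for a universal $C_0$. In the complementary regime $N<4k/e$ I would instead use the trivial bound $|\mathcal{J}_{m,n}|\leq N^{2k}$ together with the factorization $N^{2k}=(N/k)^{k+m/2}\, N^{k-m/2}\, k^{k+m/2}$ and the estimate $(N/k)^{k+m/2}\leq (4/e)^{2k}$. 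The main obstacle is precisely this case split and the use of $m+n\leq 2k$: without the reduced-form constraint the target bound genuinely fails (e.g.\ when $n$ is close to $N$), so the argument is intrinsically tied to the context supplied by Lemma~\ref{identifycancel}. Everything else is Stirling bookkeeping with substantial slack relative to the announced constant $C=512e$.
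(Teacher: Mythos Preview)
Your approach is correct and genuinely different from the paper's. The paper works directly at the level of multiplicities: it fixes the number $p$ of indices above $m+n$ that are used at least twice, decomposes the total weight $2k$ as $s_m + s_n + (\text{rest})$, and then bounds a triple sum of multinomial coefficients by repeated use of $\binom{a}{b}\leq 2^a$ and crude pointwise bounds of the type $m,n,p\leq 2k$. Your anchor argument instead isolates the mandatory occurrences of $1,\dots,m$ first, and then the residual count $G$ factors cleanly via the multinomial theorem once you feed in the effective-set estimate of Lemma~\ref{CombiI}. This is more economical: you reuse existing machinery rather than rederive it, and the constant you obtain (roughly $8e$ in the main regime) sits well below the paper's $512e$.

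Two small remarks. First, your case split at $N=4k/e$ is a bit artificial here: the lemma lives inside the proof of Proposition~\ref{PropkSmall}, where $4k\leq N$ is a standing assumption, so your ``complementary regime'' is in fact empty and only the first case ever fires. This same standing assumption, together with $m+n\leq 2k$, gives $N-m-n\geq 2k\geq q_3$, so Lemma~\ref{CombiI} applies in its stated range $1\leq p\leq q$; you should make that check explicit rather than justifying only $q_3/2\leq k$. Second, your closing observation is exactly right and worth stressing: the paper's own proof also silently uses $m,n\leq 2k$ (when it bounds $n^{2k-s_m-s_n}p^{s_n-p}m^{s_m}\leq (2k)^{2k-p}$), and the stated inequality is indeed false for unconstrained $n$ --- so the reduced-form context is essential in both arguments, not just yours.
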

\begin{proof}[Proof of Lemma \ref{cardJmn}]
A multi-index $J_{2k}$ belongs to ${\cal J}_{m,n}$ iff $b_l\geq 1$ for $l\leq m$ and $b_l=0,\,2,\,3,...$ for $l > m+n$.  Let us distinguish further between those $l>m+n$ where $b_l=0$ and those for which $b_l\geq 2$. 

Choose first $p=0, 1,  \dots,  \lfloor \frac{2k-m}{2} \rfloor $ and choose then $p$ indices $l_1,\ldots, l_p$ between $m+n+1$ and $N$ which exactly correspond to $b_l\geq 2$. There are $\binom{N-m-n}{p}$ such possibilities. 

Once these $l_1,\ldots,l_p$ have been chosen, the set of possible multiplicities for $J_{2k}\in {\cal J}_{m,n}$ is given by
\[\begin{split}
&{\cal B}_{m,n,p,l_1,\ldots,l_p}=\Big\{(b_1,\ldots,b_N)\,|\;b_1,\ldots, b_m\geq 1,\ b_{l_1},\ldots,b_{l_p}\geq 2,\\
&\quad b_{l}=0\ \mbox{if}\ l>m+n\ \mbox{and}\ l\neq l_1,\ldots,l_p,\ \mbox{and}\ b_1+b_2+\cdots+b_N=2k\}.
\end{split}\]
After the multiplicities are known it is straightforward to obtain the number of $J_{2k}$ in ${\cal J}_{m,n}$, using Lemma \ref{multiplicity}. Decomposing all the possible $J_{2k}$ according to those possibilities, one hence finds
\[
|{\cal J}_{m,n}|=\sum_{p=0}^{\lfloor \frac{2k-m}{2} \rfloor}\sum_{l_1,\ldots ,l_p=m+n+1,\ldots, N}\sum_{(b_1,\ldots,b_N)\in {\cal B}_{m,n,p,l_1,\ldots,l_p}} \frac{(2k)!}{b_1!\cdots b_N!}.
\]
Note that since $b_{l_1},\ldots, b_{l_p}\geq 2$ and $b_1,\ldots,b_m\geq 1$ one has that $m+2p\leq b_1+\cdots+b_N=2k$, leading to the upper bound $p\leq k-m/2$.

Furthermore using the invariance by permutation, one may immediately reduce this expression by assuming that $l_1=m+n+1$, $l_2=m+n+2$... Denoting the partial sums $s_m=b_1+\cdots+b_m$ and $s_n=b_{m+n+1}+\cdot+b_{m+n+p}$, one has
\[\begin{split}
&|{\cal J}_{m,n}|=\sum_{p=0}^{k-m/2} \binom{N-m-n}{p} \sum_{s_m=m}^{2k-2p}\ \sum_{b_1\,\ldots b_m\geq 1,\ b_1+\cdots+b_m=s_m}\\
&\sum_{s_n=2p}^{2k-s_m}\ \sum_{b_{m+n+1},\ldots, b_{m+n+p}\geq 2,\ b_{m+n+1}+\cdots+ b_{m+n+p}= s_n}\\
&\sum_{b_{m+1},\ldots,b_{m+n}\geq 0,\ b_{m+1}+\cdots+b_{m+n}=2k-s_m-s_n} \frac{(2k)!}{b_1!\cdots b_{m+n+p}!}.
\end{split}\]
Using the standard multinomial summation \eqref{multinomialsum}, one can easily calculate the last sum to obtain
\[\begin{split}
&|{\cal J}_{m,n}|=\sum_{p=0}^{k-m/2} \binom{N-m-n}{p} \\
&\ \sum_{s_m=m}^{2k-2p}\ \sum_{b_1\,\ldots b_m\geq 1,\ b_1+\cdots+b_m=s_m}\,
\sum_{s_n=2p}^{2k-s_m}\,\frac{n^{2k-s_m-s_n}}{(2k-s_m-s_n)!}\\
& \sum_{b_{m+n+1},\ldots, b_{m+n+p}\geq 2,\ b_{m+n+1}+\cdots+ b_{m+n+p}= s_n}
 \frac{(2k)!}{b_1!\cdots b_{m}!\,b_{m+n+1 }!\cdots b_{m+n+p}!}.
\end{split}\]
Now bound the sum on $b_1\dots b_m$ by the sum starting at $b_1,\ldots,b_m=0$ and similarly for the sum on $b_{m+n+1}\dots b_{m+n+p}$ to obtain
\[\begin{split}
&|{\cal J}_{m,n}|\leq \sum_{p=0}^{k-m/2} \binom{N-m-n}{p} \\
&\qquad \sum_{s_m=m}^{2k-2p}\,
\sum_{s_n=2p}^{2k-s_m}\,\frac{(2k)!\,n^{2k-s_m-s_n}\,m^{s_m}\,p^{s_n}}{(2k-s_m-s_n)!\,s_m!\,s_n!}.\\
\end{split}\]
We recall the obvious bound $\binom{a}{b}\leq 2^a$ so that
\[\begin{split}
\frac{(2k)!}{(2k-s_m-s_n)!\,s_m!\,s_n!} & =\frac{(2k-s_m)!}{(2k-s_m-s_n)!\,s_n!}\,\frac{(2k)!}{(2k-s_m)!\,s_m!} \\ & =  \binom{2k }{s_m} \binom{ 2k- s_m }{ s_n }   \leq 2^{4k}.\\
\end{split}\]
Furthermore by Lemma \ref{BioCoeBouLem}, $\binom{N-m-n}{p}\leq e^p\,N^{p}\,p^{-p}$. Thus
\[\begin{split}
&|{\cal J}_{m,n}|\leq 2^{4k}\,\sum_{p=0}^{k-m/2 } e^p\,N^p \sum_{s_m=m}^{2k-2p}\,
\sum_{s_n=2p}^{2k-s_m}n^{2k-s_m-s_n}p^{s_n-p}\,m^{s_m}.\\
\end{split}\]
Note that $2k-s_m-s_n\geq 0$ and $s_n-p\geq 0$ and $m,\,n,\,p\leq 2k$ so
\[
n^{2k-s_m-s_n}\,p^{s_n-p}\,m^{s_m}\leq (2k)^{2k-p}.
\]
Therefore finally
\[\begin{split}
|{\cal J}_{m,n}|&\leq 2^{6k}\,e^k\,(2k)^2\,\sum_{p=0}^{k-m/2} N^p\,k^{2k-p}\\
&\leq 2^{6k}\,e^k\,(2k)^2\,  k \,N^{k-m/2}\,k^{k+m/2} \leq  (2^9 e)^k \,N^{k-m/2}\,k^{k+m/2}, 
\end{split}\]
since $N\geq k$, the maximum of $N^p\,k^{2k-p}$ is attained for the maximal value of $p$.
\end{proof}

\subsubsection{Conclusion of the proof of the Proposition \ref{PropkSmall}}
Observe that for a particular choice of $I_{2k}$ and $J_{2k}$
\begin{equation}
\label{boundforIJ}\begin{split}
& \int_{\Pi^{d\,N}} \bar{\rho}_N\,\phi(x_{i_1},x_{j_1})\cdots\phi(x_{i_{2k}},x_{j_{2k}}) \, \ud X^N\\
&\qquad\leq   \int_{\Pi^{d\,N}} \bar{\rho}_N\, \Pi_{\nu =1}^{2k} \sup_z |\phi(x_{i_\nu},z)|\,  \ud X^N\\   
&\qquad\leq  \int_{\Pi^{d\,N}} \bar{\rho}_N\,( \sup_z |\phi(x_{1},z)|)^{a_1}\ldots ( \sup_z |\phi(x_{N},z)|)^{a_N}\ud X^N. \end{split}
\end{equation}
As one readily sees this bound only depends on the multiplicity in $I_{2k}$.

We use the cancellations obtained in Lemma \ref{identifycancel} to deduce from \eqref{boundforIJ}, 
\[\begin{split}
&  \int_{\Pi^{d\,N}} \bar{\rho}_N\, \Big|\frac{1}{N} \sum_{i, j=1}^N \phi(x_i,x_j)\Big|^{2k}\, \ud X^N \\
&\qquad \leq \frac{1}{N^{2k}}\,
\sum_{ \substack{a_1+\cdots + a_N=2k, \\a_1 \geq 0, \cdots, a_N \geq 0. } }\,|{\cal J}_{m_a,n_a}|\,|\{I_{2k}\,|\;\Phi_{N,2k}(I_{2k})=(a_1,\ldots,a_N)\}|\\
&\qquad\quad \cdot \int_{\Pi^{d\,N}} \bar{\rho}_N\, ( \sup_z |\phi(x_{1},z)|)^{a_1}\ldots ( \sup_z |\phi(x_{N},z)|)^{a_N}  \,\ud X^N,
\end{split} 
\]
where we denote $m_a=m_{(a_1,\ldots,a_N)}=|\{l\,|\;a_l=1\}|$, $n_a=n_{(a_1,\ldots,a_N)}=|\{l\,|\;a_l>1\}|$ and we recall that $\Phi_{N,2k}(I_{2k})$ is the multiplicity function associating to each $I_{2k}$ the vector $(a_1,\ldots,a_N)$ of multiplicities.

Remark that
\[\begin{split}
& \int_{\Pi^{d\,N}} \bar{\rho}_N\,( \sup_z |\phi(x_{1},z)|)^{a_1}\ldots ( \sup_z |\phi(x_{N},z)|)^{a_N} \,\ud X^N\\
&\quad\leq  e^{2k} \left(\sup_{p\geq 1} \frac{\|\sup_z |\phi(.,z)|\|_{L^p(\bar\rho\,dx)}}p\right)^{2k}\,a_1!\cdots a_N!,
\end{split}
\]
since $a^a\leq e^a\,a!$.

On the other hand by Lemma \ref{multiplicity}
\[
|\{I_{2k}\,|\;\Phi_{N,2k}(I_{2k})=(a_1,\ldots,a_N)\}|\leq \frac{(2k)!}{a_1!\cdots a_N!},
\]
which implies that
\[\begin{split}
&  \frac{1}{(2k)!}\,\int_{\Pi^{d\,N}} \bar{\rho}_N\, \Big|\frac{1}{N} \sum_{i, j=1}^N \phi(x_i,x_j)\Big|^{2k}\, \ud X^N \\
& \quad \leq\frac{ e^{2k}}{N^{2k}}\,\Big(\sup_{p\geq 1} \frac{\|\sup_z |\phi(.,z)|\|_{L^p(\bar\rho\,dx)}}p \Big)^{2k}\, \sum_{\substack{a_1+\cdots + a_N=2k, \\a_1 \geq 0, \cdots, a_N \geq 0. }}\,|{\cal J}_{m_a,n_a}|.
\end{split}
\]
We apply Lemma \ref{cardJmn}
\[\begin{split}
&  \frac{1}{(2k)!}\,\int_{\Pi^{d\,N}} \bar{\rho}_N\, \Big|\frac{1}{N} \sum_{i, j=1}^N \phi(x_i,x_j)\Big|^{2k}\, \ud X^N  \leq \frac{ e^{2k}}{N^{2k}}\\
&\qquad\quad \cdot \left(\sup_{p\geq 1} \frac{\|\sup_z |\phi(.,z)|\|_{L^p(\bar\rho\,dx)}}p\right)^{2k}
\,\sum_{\substack{a_1+\cdots + a_N=2k, \\a_1 \geq 0, \cdots, a_N \geq 0. }}\,C^k\,N^{k-m_a/2}\,k^{k+m_a/2}.
\end{split}
\]
 Consider any $(a_1,\ldots,a_N)$ with exactly $p$ coefficients $a_l\geq 1$. Up to $\binom{N}{p}$ permutations, we can actually assume that $a_1 ,  \cdots,  a_{p}\geq 1$. All the other $a_l$ are $0$. Since we have $m_a+n_a=p$ and $m_a+2\,n_a\leq 2k$ then $m_a\geq 2\,(p-k)$. As $N\geq k$ then
\[
N^{k-m_a/2}\,k^{k+m_a/2}\leq N^{k-(p-k)_+}\,k^{k+(p-k)_+}.
\]
Hence
\[\begin{split}
&\sum_{a_1+\cdots a_N=2k} N^{k-m_a/2}\,k^{k+m_a/2}\\
&\qquad=\sum_{p=1}^{2k} \binom{N}{p}\, \sum_{a_{1},\ldots,a_{p}\geq 1,\;a_{1}+\cdots+a_{p}= 2k} N^{k-(p-k)_+}\,k^{k+(p-k)_+}\\
&\qquad\leq \sum_{p=1}^{2k} \binom{N}{p}\,\binom{2k-1}{p-1}\,N^{k-(p-k)_+}\,k^{k+(p-k)_+},
\end{split}
\]
by Lemma \ref{Diophantine}. 
Since  $\binom{2k-1}{p-1}\leq 2^{2k}$ and for $p\leq k$,  $\binom{N}{p}$ is maximum when  $p=k$, 
\[
\sum_{p=1}^{k} \binom{N}{p}\,\binom{2k-1}{p-1}\,N^{k-(p-k)_+}\,k^{k+(p-k)_+}\leq 2^{2k}\,k\,\binom{N}{k}\,N^k\,k^k\leq (8 e )^k\, N^{2k},
\]
by  Lemma \ref{BioCoeBouLem}. Still by Lemma \ref{BioCoeBouLem} for $p >  k$,
\[
\binom{N}{p}\,N^{k-(p-k)_+}\,k^{k+(p-k)_+}\leq e^p\,N^p\,p^{-p}\, N^{2k-p}\,k^{p} \leq e^p\,N^{2k}.
\]
Hence again
\[
\sum_{p=k+1 }^{2k} \binom{N}{p}\,\binom{2k-1}{p-1}\,N^{k-(p-k)_+}\,k^{k+(p-k)_+} \leq k 2^{2k } e^{2k} N^{2k} \leq   \frac{1}{2} (8e^2)^k \, N^{2k}.
\]
Finally,
\[\begin{split}
& \frac{1}{(2k)!}\,\int_{\Pi^{d\,N}} \bar{\rho}_N\, \Big|\frac{1}{N} \sum_{i, j=1}^N \phi(x_i,x_j)\Big|^{2k}\, \ud X^N \\
&\qquad\quad \leq { (8\,e^4 C )^{k}}\,\left(\sup_{p\geq 1} \frac{\|\sup_z |\phi(.,z)|\|_{L^p(\bar\rho\,dx)}}p\right)^{2k}\\
&\qquad\quad \leq { 800^{2k}}\,\left(\sup_{p\geq 1} \frac{\|\sup_z |\phi(.,z)|\|_{L^p(\bar\rho\,dx)}}p\right)^{2k},\\
\end{split}
\]
concluding the proof of  Proposition \ref{PropkSmall}. 

\smallskip 

\section{Conclusion}
We have presented a new approach to the mean-field limit based on relative entropies at the level of the Liouville equations. While the role of the entropy had long been recognized (for example in \cite{FHM} and later in \cite{GQ,LY}), our method allows to quantitatively estimate the convergence of each marginal at the optimal rate $N^{-1/2}$. The key for the technical argument is a large deviation bound
\begin{equation}
\sup_N \int_{\Pi^{d\,N}} e^{\frac{1}{N}\sum_{i,j=1}^N \phi(x_i,x_j)}\,\bar\rho_N\, \ud X^N<0,
\label{LDB}
\end{equation}
for a modified potential $\phi$ that is not the potential of the dynamics and that is not continuous.

While this allows us to treat a large class of interaction kernels, there are many questions left open by the present work that we mention briefly below.

\begin{itemize}
\item {\em Going from the case with periodic boundary 
conditions  to non-compact settings or boundary condition}. Choosing to study the dynamics in the torus $\Pi^d$, as we did here, is convenient but somewhat artificial.
  The main difficulty to extending our theory to more realistic domains are the assumption $\inf \bar\rho>0$ in Theorem \ref{maintheorem} and $\log \bar\rho\in BMO$ (more precisely when $\sigma_N \equiv \sigma$) in Theorem \ref{thmvanishingviscosity}.

  $\inf \bar\rho>0$ is simply not compatible with any unbounded domain (and keeping finite mass), while  $\log \bar\rho\in BMO$ would limit the application to slowly decaying (polynomially) densities.
A possible solution would involve introducing appropriate weights in the relative entropy. 

The case of smooth bounded domains $\Omega\subset \R^d$ depends much on the precise boundary condition that is imposed; Reflective and incoming boundary conditions for example are generally compatible with our relative entropy method.  But we may still sometimes have difficulties with the previous assumption $\inf\bar\rho>0$, if for instance the incoming density vanishes.
\item {\em Extending the large deviation estimate}. It is not clear to us at this point what would be an optimal assumption on $\phi$ for \eqref{LDB} to hold. An important issue is how important a lower bound on $\phi$ is and whether we need less on $\phi_-$ (the negative part) than on $\phi_+$. For the classical large deviation result for example if $\phi$ is continuous, then the smallness of $\phi_+$ is enough. Clearly the negative part of $\phi$ only helps in \eqref{LDB} but our combinatorial analysis does not easily allow us to differentiate between $\phi_+$ and $\phi_-$.
\item {\em Gradient flow dynamics}. Theorems \ref{maintheorem} and \ref{thmvanishingviscosity} do not perform well for gradient flows: This is due to the assumptions $\udiv K$ in $W^{-1,\infty}$ or $L^\infty$. If $K=-\nabla V$ then this almost imposes $K\in L^\infty$ or $W^{1,\infty}$ in that case. If the dynamics is attractive then some difficulties are expected.  The repulsive case is however connected to the previous remark as $\phi$ in \eqref{LDB} includes a $\udiv K$ term: If we did not need to impose conditions on $\phi_-$ then we would not need to impose conditions either on $\udiv K$. We would then be able to derive the Keller-Segel equations, {\em i.e.} the Poisson case. 
\item  {\em Better use of the energy of the dynamics}. We are not employing in this article the energy or other dissipated or invariant quantities of the system, which could obviously be useful.

In particular, \cite{DuerSer,Serfaty,SerfatyCou} recently introduced a relative entropy method at the the level of the empirical measure based on the energy of the system. This allows to obtain quantitative estimates, in particular for deterministic settings, with quite singular interactions of  Riesz potential form.
  
We should also mention here the techniques developed by \cite{SandierSerfaty,Serfaty11} for non-convex setting.  For the case of hydrodynamics for Ginzburg-Landau spin systems, we also refer to \cite{Fathi} for a quantitative convergence results of relative entropy of particles systems towards  its counterpart in the scaling limit. 
\item {\em Is it possible to make fluctuations explicit?} This would for example mean making explicit  the $O(N)$ term in our relative entropy estimates.  
In the smooth case ($K \in W^{1, \infty}$), large deviations from the limiting PDE were notably established in \cite{DawsontG}. In the framework developed here, this would likely require being more precise than the bound \eqref{LDB} and so impose more regularity on $\phi$ and then $K$. There are several applications of entropy bounds and super-exponential estimates in scaling limits for instance \cite{GPV}. 

There are nevertheless results on fluctuations for singular kernels, in particular \cite{FontbonaLDP} where a large deviation estimate is obtained from the limiting law for systems introduced in \cite{CepaL}, i.e.  stochastic interacting particle systems in 1D with $K(x)= 1/x$. 
\item {\em Other settings: Collisional models, quantum systems...}  The notion of propagation of chaos is of course critical in many other frameworks. First come to mind the collisional regime, with Boltzmann or Landau equations. While Theorem \ref{maintheorem} allows for kernels $K$ leading to collision dynamics, this requires a fully non-degenerate diffusion and a different scaling.  

The stochastic particles approximation of the Boltzmann equation had been studied in \cite{GrahamM}. The propagation of chaos for the Landau equation was obtained in \cite{Carrapatoso,FournierHLandau}, with the so-called Nanbu particles investigated in \cite{FournierMischler}. 

We also refer to the review \cite{Sai} for a discussion of the role of exchangeability and entropy for systems of particles in a larger context; to \cite{GalSaiTex} more specifically for a thorough discussion derivation of the Boltzmann equation from deterministic (Newton) particles dynamics and to the recent \cite{BodGalSai} for the derivation of Brownian dynamics from hard spheres in realistic time scales.

The discussion of the mean-field limit for many particles quantum systems would of course deserve a review of its own. The Von Neumann entropy is the direct equivalent of the relative entropy that we are using, but it is not clear to us how the strategy of the present paper could be extended to that setting.

We do note that quantitative estimates and rates of convergence are well known for quantum systems with singular interactions such as in \cite{Gol,KnowlesPickl,PaulPS}.
\end{itemize}





\end{document}